\numberwithin{equation}{section}
\newtheorem{theorem}{Theorem}[section]
\newtheorem{lemma}[theorem]{Lemma}
\newtheorem{proposition}[theorem]{Proposition}
\theoremstyle{definition}
\newtheorem{remark}[theorem]{Remark}
\numberwithin{equation}{section}
\newcommand{\h}{\mathcal{H}}
\newcommand{\C}{\mathbb{C}}
\newcommand{\Z}{\mathbb{Z}}
\newcommand{\res}{\mathrm{restricted}}
\begin{document}
\title[Berezin-type operators]{A class of Berezin-type operators on weighted Fock spaces with $A_{\infty}$-type weights}
\date{September 2, 2024.
}
\author[J. Chen]{Jiale Chen}
\address{Jiale Chen, School of Mathematics and Statistics, Shaanxi Normal University, Xi'an 710119, China.}
\email{jialechen@snnu.edu.cn}

\thanks{This work was supported by the Fundamental Research Funds for the Central Universities (No. GK202207018) of China.}

\subjclass[2020]{32A37, 47G10, 47B35}
\keywords{Berezin-type operator, weighted Fock space, $A_{\infty}$-type weight}


\begin{abstract}
  \noindent Let $0<\alpha,\beta,t<\infty$ and $\mu$ be a positive Borel measure on $\mathbb{C}^n$. We consider the Berezin-type operator $S^{t,\alpha,\beta}_{\mu}$ defined by
  $$S^{t,\alpha,\beta}_{\mu}f(z):=\left(\int_{\mathbb{C}^n}e^{-\frac{\beta}{2}|z-u|^2}|f(u)|^te^{-\frac{\alpha t}{2}|u|^2}d\mu(u)\right)^{1/t}
  ,\quad z\in\mathbb{C}^n.$$
  We completely characterize the boundedness and compactness of $S^{t,\alpha,\beta}_{\mu}$ from the weighted Fock space $F^p_{\alpha,w}$ into the Lebesgue space $L^q(wdv)$ for all possible indices, where $w$ is a weight on $\mathbb{C}^n$ that satisfies an $A_{\infty}$-type condition. This solves an open problem raised by Zhou, Zhao and Tang [Banach J. Math. Anal. 18 (2024), Paper No. 20]. As an application, we obtain the description of the boundedness and compactness of Toeplitz-type operators acting between weighted Fock spaces induced by $A_{\infty}$-type weights.
\end{abstract}
\maketitle


\section{Introduction}
\allowdisplaybreaks[4]

The purpose of this paper is to investigate a class of Berezin-type operators acting on weighted Fock spaces. Let us begin from the definition of the spaces to work on. Given a measurable function $w$ on the $n$ dimensional complex Euclidean space $\C^n$, we say $w$ to be a weight if it is non-negative and locally integrable on $\C^n$. Given $0<p,\alpha<\infty$ and a weight $w$ on $\C^n$, the weighted space $L^p_{\alpha,w}$ consists of measurable functions $f$ on $\C^n$ such that
$$\|f\|^p_{L^p_{\alpha,w}}:=\int_{\C^n}|f(z)|^pe^{-\frac{p\alpha}{2}|z|^2}w(z)dv(z)<\infty,$$
where $dv$ is the Lebesgue measure on $\C^n$. We also write $L^p(wdv):=L^p_{0,w}$. For $0<p,\alpha<\infty$, the weighted Fock space $F^p_{\alpha,w}$ is defined by
$$F^p_{\alpha,w}:=L^p_{\alpha,w}\cap\mathcal{H}(\C^n)$$
with inherited (quasi-)norm, where $\mathcal{H}(\C^n)$ is the set of entire functions on $\C^n$. If $w\equiv1$, then we obtain the standard Fock spaces $F^p_{\alpha}$. We refer to \cite{Zh} for a brief account on Fock spaces.

Given $r>0$ and $z\in\C^n$, we use $Q_r(z)$ to denote the cube in $\C^n$ centered at $z$ with side length $r$. As usual, for $1<p<\infty$, $p'$ denotes the conjugate exponent of $p$, i.e. $1/p+1/p'=1$. Given $1<p<\infty$, a weight $w$ is said to belong to the class $A^{\res}_p$ if $w>0$ a.e. on $\C^n$ and for some (or any) fixed $r>0$,
$$\sup_{z\in\C^n}\left(\frac{1}{v(Q_r(z))}\int_{Q_r(z)}wdv\right)
	\left(\frac{1}{v(Q_r(z))}\int_{Q_r(z)}w^{-\frac{p'}{p}}dv\right)^{\frac{p}{p'}}<\infty.$$
The class $A^{\res}_p$ was introduced by Isralowitz \cite{Is} for characterizing the boundedness of Fock projections on the weighted spaces $L^p_{\alpha,w}$. Recall that for $\alpha>0$, the Fock projection $P_{\alpha}$ is defined for $f\in L^1_{\mathrm{loc}}(\C^n,dv)$ by
$$P_{\alpha}(f)(z):=\left(\frac{\alpha}{\pi}\right)^n\int_{\C^n}f(u)e^{\alpha\langle z,u\rangle}e^{-\alpha|u|^2}dv(u),\quad z\in\C^n.$$
Isralowitz \cite{Is} proved that for $p>1$, $P_{\alpha}$ is bounded on $L^p_{\alpha,w}$ if and only if $w\in A^{\res}_{p}$. Later on, Cascante, F\`{a}brega and Pel\'{a}ez \cite{CFP} generalized Isralowitz's result to the case $p=1$. Let the class $A^{\res}_1$ consist of weights $w$ on $\C^n$ such that $w>0$ a.e. on $\C^n$ and for some (or any) fixed $r>0$,
$$\sup_{z\in\C^n}\frac{\int_{Q_r(z)}wdv}{v(Q_r(z))\mathrm{ess}\inf_{u\in Q_r(z)}w(u)}<\infty.$$
It was proved in \cite{CFP} that $P_{\alpha}$ is bounded on $L^1_{\alpha,w}$ if and only if $w\in A^{\res}_1$. Similar to the Muckenhoupt weights, we define
$$A^{\res}_{\infty}:=\bigcup_{1\leq p<\infty}A^{\res}_p.$$
Recently, the function and operator theory on weighted Fock spaces induced by weights from $A^{\res}_{\infty}$ developed quickly; see \cite{CFP23,CFP,Ch24,CHW,CW24}.

In this paper, we are going to investigate a class of nonlinear Berezin-type operators acting on weighted Fock spaces $F^p_{\alpha,w}$ induced by $w\in A^{\res}_{\infty}$. Let $0<t,\alpha,\beta<\infty$ and $\mu$ be a positive Borel measure on $\C^n$. The Berezin-type operator $S^{t,\alpha,\beta}_{\mu}$ is formally defined for $f\in\h(\C^n)$ by
$$S^{t,\alpha,\beta}_{\mu}f(z):=\left(\int_{\C^n}e^{-\frac{\beta}{2}|z-u|^2}|f(u)|^te^{-\frac{\alpha t}{2}|u|^2}d\mu(u)\right)^{1/t}
,\quad z\in\C^n.$$
This class of operators was initially introduced by Pau and Zhao \cite{PZ15} in the setting of weighted Bergman spaces over the unit ball of $\C^n$, in route of establishing the product-type characterization of Bergman--Carleson measures. Zhou, Zhao and Tang \cite{ZZT} investigated the boundedness of the Berezin-type operators $S^{t,\alpha,\beta}_{\mu}$ on standard Fock spaces $F^p_{\alpha}$ and obtained the product-type characterization of Fock--Carleson measures. It was shown in \cite[Theorem 1.3]{ZZT} that if $S^{t,\alpha,\beta}_{\mu}:F^p_{\alpha}\to L^q(dv)$ is bounded, then
$$\sup_{z\in\C^n}\mu(B_1(z))<\infty$$
when $p\leq q$, and
$$\int_{\C^n}\mu(B_1(z))^{\frac{pq}{t(p-q)}}dv(z)<\infty$$
when $p>q$. Here and in the sequel, $B_r(z)$ denotes the Euclidean ball in $\C^n$ centered at $z\in\C^n$ with radius $r>0$. However, the sufficient condition for $S^{t,\alpha,\beta}_{\mu}:F^p_{\alpha}\to L^q(dv)$ to be bounded was left open in \cite{ZZT}. The purpose of this paper is to solve this problem.

Since the operator $S^{t,\alpha,\beta}_{\mu}$ is nonlinear, we here say a few words to clarify its boundedness and compactness. It is easy to verify that $S^{t,\alpha,\beta}_{\mu}:F^p_{\alpha,w}\to L^q(wdv)$ is bounded if and only if there exists $C>0$ such that for any $f\in F^p_{\alpha,w}$, $\left\|S^{t,\alpha,\beta}_{\mu}f\right\|_{L^q(wdv)}\leq C\|f\|_{F^p_{\alpha,w}}$. We will write
$$\left\|S^{t,\alpha,\beta}_{\mu}\right\|_{F^p_{\alpha,w}\to L^q(wdv)}:
=\sup_{\|f\|_{F^p_{\alpha,w}}\leq1}\left\|S^{t,\alpha,\beta}_{\mu}f\right\|_{L^q(wdv)}.$$
We say that $S^{t,\alpha,\beta}_{\mu}:F^p_{\alpha,w}\to L^q(wdv)$ is compact if $S^{t,\alpha,\beta}_{\mu}$ maps any bounded set in $F^p_{\alpha,w}$ to a relative compact set in $L^q(wdv)$.

For measurable subsets $E\subset\C^n$, we write $w(E):=\int_Ewdv$. Our main result reads as follows, which completely characterizes the boundedness and compactness of $S^{t,\alpha,\beta}_{\mu}:F^p_{\alpha,w}\to L^q(wdv)$ for all $0<p,q,t,\alpha,\beta<\infty$ and $w\in A^{\res}_{\infty}$. In particular, when $w\equiv1$, we solve the open problem raised in \cite{ZZT}.

\begin{theorem}\label{main1}
Let $0<p,q,t,\alpha,\beta<\infty$, $w\in A^{\res}_{\infty}$, and let $\mu$ be a positive Borel measure on $\C^n$. Consider the function
$$G_{\mu}(z):=\frac{\mu(B_1(z))^{\frac{1}{t}}}{w(B_1(z))^{\frac{1}{p}-\frac{1}{q}}},\quad z\in\C^n.$$
\begin{enumerate}
	\item [(1)] If $p\leq q$, then the following assertions hold.
	\begin{enumerate}
		\item [(i)] $S^{t,\alpha,\beta}_{\mu}:F^p_{\alpha,w}\to L^q(wdv)$ is bounded if and only if $G_{\mu}$ is bounded on $\C^n$. Moreover,
		$$\left\|S^{t,\alpha,\beta}_{\mu}\right\|_{F^p_{\alpha,w}\to L^q(wdv)}\asymp\sup_{z\in\C^n}G_{\mu}(z).$$
		\item [(ii)] Assume that $\mu$ is finite on compact subsets of $\C^n$. Then $S^{t,\alpha,\beta}_{\mu}:F^p_{\alpha,w}\to L^q(wdv)$ is compact if and only if $\lim_{|z|\to\infty}G_{\mu}(z)=0$.
	\end{enumerate}
	\item [(2)] If $p>q$, then the following conditions are equivalent:
	\begin{enumerate}
		\item [(i)] $S^{t,\alpha,\beta}_{\mu}:F^p_{\alpha,w}\to L^q(wdv)$ is bounded;
		\item [(ii)] $S^{t,\alpha,\beta}_{\mu}:F^p_{\alpha,w}\to L^q(wdv)$ is compact;
		\item [(iii)] $G_{\mu}\in L^{\frac{pq}{p-q}}(dv)$.
	\end{enumerate}
	Moreover,
	$$\left\|S^{t,\alpha,\beta}_{\mu}\right\|_{F^p_{\alpha,w}\to L^q(wdv)}\asymp\|G_{\mu}\|_{L^{\frac{pq}{p-q}}(dv)}.$$
\end{enumerate}
\end{theorem}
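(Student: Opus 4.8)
The plan is to reduce everything to a discrete model over a fixed lattice and to isolate the single place where the $A^{\res}_\infty$ hypothesis is consumed. The engine of the whole argument is the \emph{weighted sub-mean-value estimate}: for $0<s<\infty$, $w\in A^{\res}_\infty$ and fixed $r>0$ there is $C>0$ with
$$|f(z)|^se^{-\frac{s\alpha}{2}|z|^2}\le\frac{C}{w(B_r(z))}\int_{B_r(z)}|f(u)|^se^{-\frac{s\alpha}{2}|u|^2}w(u)\,dv(u)$$
for all entire $f$. This is proven by flattening the Gaussian on $B_r(z)$ so that $|f|^se^{-\frac{s\alpha}{2}|\cdot|^2}$ becomes comparable to $|F|^s$ for an entire $F$, applying subharmonicity of $|F|^{s\theta}$ on a smaller ball, and then using H\"older together with the defining $A^{\res}_{p_0}$ inequality (with $\theta/(1-\theta)=p_0'/p_0$) to pass from the unweighted to the weighted average; this is exactly where $w\in A^{\res}_\infty$ is used. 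I will also record two routine consequences: the doubling growth $w(B_R(z))\lesssim R^M w(B_1(z))$ and, by summation over annuli against Gaussian decay, $\int_{\C^n}e^{-c|z-\zeta|^2}w(\zeta)\,dv(\zeta)\asymp w(B_1(z))$. Fixing a lattice $\{z_k\}$ with $\{B_{1/2}(z_k)\}$ disjoint and $\{B_1(z_k)\}$ a bounded-overlap cover, decomposing the defining integral, using $e^{-\frac{\beta}{2}|z-u|^2}\lesssim e^{-c|z-z_k|^2}$ on $B_1(z_k)$, and applying the pointwise estimate with $s=p$ raised to the power $t/p$ yields the master bound
$$\left\|S^{t,\alpha,\beta}_\mu f\right\|_{L^q(wdv)}^q\lesssim\int_{\C^n}\Big(\sum_k e^{-c|z-z_k|^2}d_k\Big)^{q/t}w(z)\,dv(z),\qquad d_k=\mu(B_1(z_k))\Big(\tfrac{I_k}{w(B_1(z_k))}\Big)^{t/p},$$
where $I_k=\int_{B_2(z_k)}|f|^pe^{-\frac{p\alpha}{2}|\cdot|^2}w\,dv$ satisfies $\sum_k I_k\lesssim\|f\|_{F^p_{\alpha,w}}^p$. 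The Gaussian-lattice-sum inequality $\int(\sum_k e^{-c|z-z_k|^2}a_k)^\sigma w\,dv\lesssim\sum_k a_k^\sigma w(B_1(z_k))$ (via Jensen when $\sigma\ge1$, subadditivity when $\sigma<1$) then reduces the right-hand side to $\sum_k G_\mu(z_k)^q I_k^{q/p}$, using the bookkeeping identity $\mu(B_1(z_k))^{q/t}w(B_1(z_k))^{1-q/p}=G_\mu(z_k)^q$.

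For $p\le q$ the sufficiency is immediate from the master estimate: bounding $G_\mu(z_k)\le\sup_z G_\mu(z)$ and using $q/p\ge1$ to pass $\sum_k I_k^{q/p}\le(\sum_k I_k)^{q/p}$ gives $\|S^{t,\alpha,\beta}_\mu\|_{F^p_{\alpha,w}\to L^q(wdv)}\lesssim\sup_z G_\mu(z)$. For necessity I will test on the normalized kernels $k_{z_0}(u)=e^{\alpha\langle u,z_0\rangle-\frac{\alpha}{2}|z_0|^2}$, for which $|k_{z_0}(u)|e^{-\frac{\alpha}{2}|u|^2}=e^{-\frac{\alpha}{2}|u-z_0|^2}$, so that $\|k_{z_0}\|_{F^p_{\alpha,w}}\asymp w(B_1(z_0))^{1/p}$ by the annuli estimate, while restricting both the $z$- and $u$-integrals to $B_1(z_0)$ gives $S^{t,\alpha,\beta}_\mu k_{z_0}(z)\gtrsim\mu(B_1(z_0))^{1/t}$ there, whence $\|S^{t,\alpha,\beta}_\mu k_{z_0}\|_{L^q(wdv)}\gtrsim\mu(B_1(z_0))^{1/t}w(B_1(z_0))^{1/q}$. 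Combining the two yields $G_\mu(z_0)\lesssim\|S^{t,\alpha,\beta}_\mu\|_{F^p_{\alpha,w}\to L^q(wdv)}$, giving the reverse estimate and the asserted norm equivalence.

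The compactness statement for $p\le q$ rests on the same bound localized as $\|S^{t,\alpha,\beta}_\mu g\|_{L^q(wdv)}^q\lesssim\sum_k G_\mu(z_k)^q I_k(g)^{q/p}$. Writing $d\mu_z(u):=e^{-\frac{\beta}{2}|z-u|^2}e^{-\frac{\alpha t}{2}|u|^2}d\mu(u)$ so that $S^{t,\alpha,\beta}_\mu f(z)=\|f\|_{L^t(d\mu_z)}$, the quasi-triangle inequality for this $L^t(d\mu_z)$-(quasi)norm controls $|S^{t,\alpha,\beta}_\mu f_j-S^{t,\alpha,\beta}_\mu f|$ by $S^{t,\alpha,\beta}_\mu(f_j-f)$. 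For sufficiency, given a bounded sequence $f_j$ I extract, using that the pointwise estimate makes Fock balls normal families, a subsequence converging locally uniformly to some $f\in F^p_{\alpha,w}$; splitting $\sum_k$ at $|z_k|\le R$ sends the near part to $0$ (finitely many $I_k(f_j-f)\to0$ by dominated convergence) and the far part to $0$ as $R\to\infty$ (since there $G_\mu(z_k)\le\sup_{|z|>R-1}G_\mu\to0$). For necessity, the normalized kernels $\tilde k_{z_0}=k_{z_0}/\|k_{z_0}\|_{F^p_{\alpha,w}}$ converge to $0$ locally uniformly as $|z_0|\to\infty$; relative compactness together with the pointwise decay $S^{t,\alpha,\beta}_\mu\tilde k_{z_0}(z)\to0$ forces $\|S^{t,\alpha,\beta}_\mu\tilde k_{z_0}\|_{L^q(wdv)}\to0$, and the lower bound above then gives $\lim_{|z_0|\to\infty}G_\mu(z_0)=0$.

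For $p>q$ set $s=\tfrac{pq}{p-q}$. The implication (iii)$\Rightarrow$(i) follows from the master estimate by H\"older in $k$ with exponents $(p/q,(p/q)')$: since $(p/q)'=p/(p-q)$ and $s(p-q)/p=q$, one gets $\|S^{t,\alpha,\beta}_\mu f\|_{L^q(wdv)}\lesssim\|G_\mu\|_{L^s(dv)}\|f\|_{F^p_{\alpha,w}}$, once the discretization equivalence $\sum_k G_\mu(z_k)^s\asymp\|G_\mu\|_{L^s(dv)}^s$ (obtained by comparing radii through doubling) is in place; the same split, with the tail $\sum_{|z_k|>R}G_\mu(z_k)^s\to0$, upgrades this to compactness, and (ii)$\Rightarrow$(i) is trivial. \textbf{The main obstacle is the necessity (i)$\Rightarrow$(iii)}, for which I will run a Luecking-type randomization: with Rademacher signs $r_k$ set $f_\omega=\sum_k r_k\lambda_k\tilde k_{z_k}$ for finitely supported $\lambda_k\ge0$. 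Khinchine's inequality gives $\mathbb{E}\|f_\omega\|_{F^p_{\alpha,w}}^p\lesssim\sum_k\lambda_k^p$ (again through the Gaussian-lattice-sum inequality), while Kahane's inequality comparing the $q$-th and $t$-th moments of $S^{t,\alpha,\beta}_\mu f_\omega(z)=\|f_\omega\|_{L^t(d\mu_z)}$, together with the diagonal bound $\mathbb{E}|f_\omega(u)|^t\gtrsim\lambda_k^t|\tilde k_{z_k}(u)|^t$ on $B_1(z_k)$, gives $\mathbb{E}\|S^{t,\alpha,\beta}_\mu f_\omega\|_{L^q(wdv)}^q\gtrsim\sum_k\lambda_k^qG_\mu(z_k)^q$. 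Feeding these into $\mathbb{E}\|S^{t,\alpha,\beta}_\mu f_\omega\|^q\le\|S^{t,\alpha,\beta}_\mu\|^q\,\mathbb{E}\|f_\omega\|^q$ (and using Kahane once more to replace the $q$-th by the $p$-th moment on the right) yields $\sum_k\lambda_k^qG_\mu(z_k)^q\lesssim\|S^{t,\alpha,\beta}_\mu\|^q(\sum_k\lambda_k^p)^{q/p}$ for all $\lambda$; duality between $\ell^{p/q}$ and $\ell^{(p/q)'}$ extracts $\sum_k G_\mu(z_k)^s\lesssim\|S^{t,\alpha,\beta}_\mu\|^s$, i.e. (iii) with $\|G_\mu\|_{L^s(dv)}\lesssim\|S^{t,\alpha,\beta}_\mu\|$. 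The delicate points are the comparability of all $L^t(d\mu_z)$-moments in the quasi-Banach range $t<1$ and the uniform control of the Rademacher sums of kernels, which is where the off-diagonal decay $|\tilde k_{z_k}(u)|e^{-\frac{\alpha}{2}|u|^2}=e^{-\frac{\alpha}{2}|u-z_k|^2}/\|k_{z_k}\|_{F^p_{\alpha,w}}$ and the bounded overlap of the lattice are decisive.
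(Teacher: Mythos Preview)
Your plan coincides with the paper's almost step for step: the same weighted sub-mean-value inequality, the same lattice discretization producing a master bound $\|S^{t,\alpha,\beta}_\mu f\|_{L^q(wdv)}^q\lesssim\sum_k G_\mu(z_k)^q I_k^{q/p}$, kernel testing for the necessity when $p\le q$, and the Rademacher--Kahane--Khinchine randomization for the necessity when $p>q$. The paper packages these same ingredients as a single discretization proposition together with the atomic bound for lattice kernel sums and two elementary discretization lemmas.

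There is one genuine gap, in the sufficiency of compactness. Your claim that ``the quasi-triangle inequality for this $L^t(d\mu_z)$-(quasi)norm controls $|S^{t,\alpha,\beta}_\mu f_j-S^{t,\alpha,\beta}_\mu f|$ by $S^{t,\alpha,\beta}_\mu(f_j-f)$'' is valid only for $t\ge1$. When $t<1$ the reverse triangle inequality reads $\bigl|(Sf_j(z))^t-(Sf(z))^t\bigr|\le(S(f_j-f)(z))^t$, and there is no uniform pointwise bound $|Sf_j-Sf|\le C\,S(f_j-f)$: with $d\mu_z$ Lebesgue measure on $[0,1]$, $t=\tfrac12$, $f=M\chi_{[0,1/2]}$, $g=f+\epsilon\chi_{[1/2,1]}$ one finds $\|g-f\|_{L^{1/2}}=\epsilon/4$ while $\bigl|\|g\|_{L^{1/2}}-\|f\|_{L^{1/2}}\bigr|\asymp\sqrt{M\epsilon}$, so the ratio blows up as $M\to\infty$. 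Hence $\|S(f_j-f)\|_{L^q(wdv)}\to0$ does not by itself yield $\|Sf_j-Sf\|_{L^q(wdv)}\to0$ when $t<1$. The paper closes this by a different mechanism: from the $t$-th-power inequality (case $t<1$) and Minkowski (case $t\ge1$) it extracts \emph{both} the pointwise convergence $Sf_j(z)\to Sf(z)$ and the norm convergence $\|Sf_j\|_{L^q(wdv)}\to\|Sf\|_{L^q(wdv)}$, and then invokes the elementary fact that a.e.\ convergence together with convergence of $L^q$-norms implies $L^q$-convergence. Your split estimate supplies exactly the input $\|S(f_j-f)\|_{L^q(wdv)}\to0$ that this scheme requires, so the repair is local; but as written the step fails for $t<1$. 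A related omission: the ``pointwise decay $S^{t,\alpha,\beta}_\mu\tilde k_{z_0}(z)\to0$'' you use in the necessity of compactness is itself nontrivial (one needs the already-established boundedness of $G_\mu$ and the at-most-exponential growth of $w(Q_1(\cdot))$ against the Gaussian), and the paper isolates it as a separate lemma.
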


Two remarks are given in order.
\begin{enumerate}
	\item [(1)] When we consider the compactness of $S^{t,\alpha,\beta}_{\mu}:F^p_{\alpha,w}\to L^q(wdv)$ in the case $p\leq q$, the assumption that $\mu$ is finite on compact subsets of $\C^n$ is not a real restriction. In fact, by the characterization of the boundedness, we know that if $S^{t,\alpha,\beta}_{\mu}:F^p_{\alpha,w}\to L^q(wdv)$ is bounded, then $\mu$ must be finite on any compact subset of $\C^n$.
	\item [(2)] The Euclidean ball $B_1(z)$ in Theorem \ref{main1} can be replaced by $B_r(z)$ for any fixed $r>0$. 
\end{enumerate}

The main step to prove Theorem \ref{main1} is a discretization estimate for the quasi-norms of $S^{t,\alpha,\beta}_{\mu}f$ (see Proposition \ref{suff}), which is workable since the Gaussian weights $e^{-\alpha|z|^2}$ decay rapidly. The compactness needs more works due to the nonlinearity of $S^{t,\alpha,\beta}_{\mu}$. We overcome this obstacle and establish an equivalent description for the compactness of $S^{t,\alpha,\beta}_{\mu}:F^p_{\alpha,w}\to L^q(wdv)$ (see Proposition \ref{cpt-eq}), which helps us to finish the proof of Theorem \ref{main1}.

As an application of Theorem \ref{main1}, we can determine the boundedness and compactness of Toeplitz-type operators acting on weighted Fock spaces $F^p_{\alpha,w}$. Given $0<\alpha<\infty$ and a positive Borel measure $\mu$ on $\C^n$, the Toeplitz-type operator $T^{\alpha}_{\mu}$ is formally defined by
$$T^{\alpha}_{\mu}f(z):=\int_{\C^n}f(u)e^{\alpha\langle z,u\rangle-\alpha|u|^2}d\mu(u),\quad z\in\C^n,$$
where $f\in\h(\C^n)$. This operator has been studied extensively on standard Fock spaces; see \cite{BCI,Fu,HL,IZ,Me,WZ} and the references therein. We here consider the boundedness and compactness of $T^{\alpha}_{\mu}:F^p_{\alpha,w}\to F^q_{\alpha,w}$ with $w\in A^{\res}_{\infty}$. It is easy to see that if $S^{1,\alpha,\alpha}_{\mu}f$ is well-defined, then $T^{\alpha}_{\mu}f$ is well-defined, and
\begin{equation}\label{bt}
\int_{\C^n}\left|T^{\alpha}_{\mu}f(z)\right|^qe^{-\frac{q\alpha}{2}|z|^2}w(z)dv(z)\leq\int_{\C^n}S^{1,\alpha,\alpha}_{\mu}f(z)^qw(z)dv(z).
\end{equation}
Based on this observation, we can apply Theorem \ref{main1} to obtain the following result.

\begin{theorem}\label{main2}
Let $0<p,q,\alpha<\infty$, $w\in A^{\res}_{\infty}$, and let $\mu$ be a positive Borel measure on $\C^n$. Consider the function
$$H_{\mu}(z):=\frac{\mu(B_1(z))}{w(B_1(z))^{\frac{1}{p}-\frac{1}{q}}},\quad z\in\C^n.$$
\begin{enumerate}
	\item [(1)] If $p\leq q$, then the following assertions hold.
	\begin{enumerate}
		\item [(i)] $T^{\alpha}_{\mu}:F^p_{\alpha,w}\to F^q_{\alpha,w}$ is bounded if and only if $H_{\mu}$ is bounded. Moreover,
		$$\left\|T^{\alpha}_{\mu}\right\|_{F^p_{\alpha,w}\to F^q_{\alpha,w}}\asymp\sup_{z\in\C^n}H_{\mu}(z).$$
		\item [(ii)] Assume that $\mu$ is finite on compact subsets of $\C^n$. Then $T^{\alpha}_{\mu}:F^p_{\alpha,w}\to F^q_{\alpha,w}$ is compact if and only if $\lim_{|z|\to\infty}H_{\mu}(z)=0$.
	\end{enumerate}
	\item [(2)] If $p>q$, then the following conditions are equivalent:
	\begin{enumerate}
		\item [(i)] $T^{\alpha}_{\mu}:F^p_{\alpha,w}\to F^q_{\alpha,w}$ is bounded;
		\item [(ii)] $T^{\alpha}_{\mu}:F^p_{\alpha,w}\to F^q_{\alpha,w}$ is compact;
		\item [(iii)] $H_{\mu}\in L^{\frac{pq}{p-q}}(dv)$.
	\end{enumerate}
	Moreover,
	$$\left\|T^{\alpha}_{\mu}\right\|_{F^p_{\alpha,w}\to F^q_{\alpha,w}}\asymp\|H_{\mu}\|_{L^{\frac{pq}{p-q}}(dv)}.$$
\end{enumerate}
\end{theorem}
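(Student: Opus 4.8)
The plan is to deduce Theorem \ref{main2} from Theorem \ref{main1} together with the pointwise domination \eqref{bt}. Observe first that when $t=1$ and $\beta=\alpha$ the auxiliary function $G_\mu$ of Theorem \ref{main1} coincides exactly with $H_\mu$. Hence the sufficiency half is immediate: if $H_\mu$ is bounded (case $p\le q$), or $H_\mu\in L^{\frac{pq}{p-q}}(dv)$ (case $p>q$), then $S^{1,\alpha,\alpha}_\mu:F^p_{\alpha,w}\to L^q(wdv)$ is bounded by Theorem \ref{main1}, and \eqref{bt} gives $\|T^\alpha_\mu f\|_{F^q_{\alpha,w}}\le\|S^{1,\alpha,\alpha}_\mu f\|_{L^q(wdv)}$, so $T^\alpha_\mu:F^p_{\alpha,w}\to F^q_{\alpha,w}$ is bounded with $\|T^\alpha_\mu\|_{F^p_{\alpha,w}\to F^q_{\alpha,w}}\lesssim\sup_z H_\mu(z)$, respectively $\lesssim\|H_\mu\|_{L^{pq/(p-q)}(dv)}$. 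I would record here that $T^\alpha_\mu$ is \emph{linear} in $\mu$ and that the same estimate applied to the tail measure $\mu|_{\C^n\setminus B_R(0)}$ yields an operator norm that vanishes as $R\to\infty$ whenever $H_\mu\in L^{\frac{pq}{p-q}}(dv)$; this will drive the compactness in the case $p>q$.

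The core of the proof is the reverse (necessity) estimate, obtained by testing $T^\alpha_\mu$ on the normalized kernels $f_a(u):=e^{\alpha\langle u,a\rangle-\frac{\alpha}{2}|a|^2}$, which satisfy $|f_a(u)|e^{-\frac{\alpha}{2}|u|^2}=e^{-\frac{\alpha}{2}|u-a|^2}$ and hence $\|f_a\|_{F^p_{\alpha,w}}\asymp w(B_r(a))^{1/p}$ for every fixed $r$, by the doubling-type behaviour of $A^{\res}_\infty$ weights. A direct computation gives the key identity
$$T^\alpha_\mu f_a(z)\,e^{-\frac{\alpha}{2}|z|^2}=\int_{\C^n}e^{-\frac{\alpha}{2}|z-u|^2-\frac{\alpha}{2}|u-a|^2}\,e^{i\alpha\,\Im\langle z-a,u\rangle}\,d\mu(u),$$
in which the dangerous $|a|$-scale oscillation present in each kernel factor has cancelled, leaving a phase $\alpha\,\Im\langle z-a,u\rangle$ controlled solely by $|z-a|$. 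Consequently, for $z$ in a ball $B_\delta(a)$ of small fixed radius $\delta=\delta(\alpha,n)$, the relative variation of this phase over $u\in B_r(a)$ stays below $\pi/3$, no cancellation occurs, and one obtains the localized lower bound $|T^\alpha_\mu f_a(z)|\,e^{-\frac{\alpha}{2}|z|^2}\gtrsim\mu(B_r(a))$ there. Integrating over $B_\delta(a)$ and using $w(B_\delta(a))\asymp w(B_r(a))$ yields $\|T^\alpha_\mu f_a\|_{F^q_{\alpha,w}}^q\gtrsim\mu(B_r(a))^q\,w(B_r(a))$.

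With this lower bound the case $p\le q$ is quickly finished. Dividing by $\|f_a\|_{F^p_{\alpha,w}}^q\asymp w(B_r(a))^{q/p}$ gives $\|T^\alpha_\mu\|_{F^p_{\alpha,w}\to F^q_{\alpha,w}}\gtrsim\mu(B_r(a))\,w(B_r(a))^{1/q-1/p}=H_\mu(a)$, so boundedness forces $\sup_a H_\mu(a)\lesssim\|T^\alpha_\mu\|$, proving (1)(i). For the compactness in (1)(ii), the normalized test functions $f_a/\|f_a\|_{F^p_{\alpha,w}}$ are bounded in $F^p_{\alpha,w}$ and tend to $0$ uniformly on compact subsets of $\C^n$ as $|a|\to\infty$ (the Gaussian factor beats the at-most-polynomial decay of $w(B_r(a))$); since a compact operator carries such a family to a norm-null family, the localized lower bound forces $H_\mu(a)\to0$, while the converse follows from \eqref{bt} and the compactness statement of Theorem \ref{main1}.

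The case $p>q$ is where the real work lies. For necessity I would test on randomized sums $\sum_k r_k(\omega)\lambda_k f_{a_k}$ over a suitably separated lattice $\{a_k\}$, with Rademacher signs $r_k$, so that $\mathbb{E}_\omega\|\cdot\|_{F^p_{\alpha,w}}^p\asymp\sum_k|\lambda_k|^p w(B_r(a_k))$; applying Khinchine's inequality to $\mathbb{E}_\omega\|T^\alpha_\mu(\cdot)\|_{F^q_{\alpha,w}}^q$, restricting the $z$-integral to the disjoint balls $B_\delta(a_k)$, and inserting the localized lower bound (together with the Gaussian off-diagonal decay of $T^\alpha_\mu f_{a_j}$ on $B_\delta(a_k)$ for $j\neq k$, which makes the cross terms negligible) recovers $\sum_k\big(\mu(B_r(a_k))\,w(B_r(a_k))^{1/q-1/p}\big)^{\frac{pq}{p-q}}\lesssim\|T^\alpha_\mu\|^{\frac{pq}{p-q}}$, that is $\|H_\mu\|_{L^{pq/(p-q)}(dv)}\lesssim\|T^\alpha_\mu\|$. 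The equivalence with compactness then follows from linearity in $\mu$: writing $T^\alpha_\mu=T^\alpha_{\mu|_{B_R(0)}}+T^\alpha_{\mu|_{\C^n\setminus B_R(0)}}$, the first summand is compact (a smoothing integral operator with a finite, compactly supported measure), while the second has operator norm $\lesssim\|H_{\mu|_{\C^n\setminus B_R(0)}}\|_{L^{pq/(p-q)}(dv)}\to0$, exhibiting $T^\alpha_\mu$ as a norm-limit of compact operators. The main obstacle throughout is precisely this necessity estimate: because $T^\alpha_\mu$, unlike the positive operator $S^{1,\alpha,\alpha}_\mu$, has an oscillatory holomorphic kernel, a naive lower bound would be destroyed by cancellation; the identity above is what tames this, localizing the cancellation to the scale $|z-a|$ and thereby enabling both the single-kernel and the randomized-sum arguments.
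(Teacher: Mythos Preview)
Your proposal is correct and largely parallels the paper's argument, but two technical choices differ. First, for the necessity lower bound you track the oscillatory phase of $T^\alpha_\mu f_a(z)e^{-\frac{\alpha}{2}|z|^2}$ on a small ball $B_\delta(a)$; the paper instead evaluates at the single point $z=a$, where the integrand collapses to the \emph{positive} kernel $e^{-\alpha|u-a|^2}$, and then recovers the ball-average lower bound via the subharmonicity estimate of Lemma~\ref{pointwise}. This is shorter and sidesteps the phase analysis entirely. Second, for the compactness sufficiency in the case $p>q$ you argue by truncation $T^\alpha_\mu=T^\alpha_{\mu|_{B_R(0)}}+T^\alpha_{\mu|_{\C^n\setminus B_R(0)}}$ and an operator-norm limit of compacts; the paper instead routes both the $p\le q$ and $p>q$ cases through the sequential criterion (Propositions~\ref{cpt-eq} and~\ref{Tcpt-eq}), which was already built to handle the nonlinear operator $S^{t,\alpha,\beta}_\mu$ and transfers to $T^\alpha_\mu$ via \eqref{bt}. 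Your truncation route is valid but requires the auxiliary checks that $T^\alpha_{\mu|_{B_R(0)}}$ is compact (dominated convergence against $e^{\alpha R|z|-\frac{\alpha}{2}|z|^2}$, integrable against $w$ by Remark~\ref{remark}) and that compact operators are closed under quasi-norm limits.

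Two small remarks. The lower bound on $w(B_r(a))$ coming from Lemma~\ref{esti} is only exponential in $|a|$, not polynomial as you wrote, though the Gaussian $e^{-\frac{\alpha}{2}|a|^2}$ still dominates and your conclusion that $f_a/\|f_a\|_{F^p_{\alpha,w}}\to 0$ uniformly on compacta is correct. And in the Rademacher step, once Khinchine's inequality has produced $\big(\sum_j |c_j|^2|T^\alpha_\mu f_{a_j}(z)|^2\big)^{q/2}$ there are no cross terms: one simply restricts $z$ to $Q_1(a_k)$ and drops the nonnegative summands with $j\neq k$, so the off-diagonal decay you mention is not needed.
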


This paper is organized as follows. In Section \ref{pre}, we give some preliminary results that will be used in the sequel. Theorems \ref{main1} and \ref{main2} are proved in Sections \ref{proof1} and \ref{proof2}, respectively. Finally, in Section \ref{CR}, we give a characterization for $(w,\lambda)$-Fock--Carleson measures by products of functions in weighted Fock spaces $F^p_{\alpha,w}$, where $w\in A^{\res}_{\infty}$.

Throughout the paper, the notation $A\lesssim B$ (or $B\gtrsim A$) means that there exists a nonessential constant $C>0$ such that $A\leq CB$. If $A\lesssim B\lesssim A$, then we write $A\asymp B$. For a subset $E\subset\C^n$, $\chi_E$ denotes its characteristic function.

\section{Preliminaries}\label{pre}

In this section, we give some preliminary results that will be used in the sequel.

We first recall the following estimates, which can be found in \cite[Lemmas 3.2 and 3.4]{Is}. Here and in the sequel, for any $r>0$, we treat $r\Z^{2n}$ as a subset of $\C^{n}$ in the natural way.

\begin{lemma}\label{esti}
Let $w\in A^{\mathrm{restricted}}_{\infty}$ and $r>0$.
\begin{enumerate}
	\item [(1)] There exists $C_1>0$ such that for any $z\in\C^n$,
	$$w(Q_{3r}(z))\leq C_1w(Q_r(z)).$$
	\item [(2)] There exists $C_2>0$ such that for any $\nu,\nu'\in r\Z^{2n}$,
	$$\frac{w(Q_r(\nu))}{w(Q_r(\nu'))}\leq C_2^{|\nu-\nu'|}.$$
\end{enumerate}
\end{lemma}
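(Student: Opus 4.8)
The plan is to use that $w\in A^{\res}_{\infty}$ means $w\in A^{\res}_p$ for some $1\leq p<\infty$, and to exploit the clause that the defining condition holds at \emph{any} fixed scale, in particular at scale $3r$ as well as $r$.

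For part (1) I would first establish the standard comparison that for $Q=Q_{3r}(z)$ and every measurable $E\subseteq Q$,
$$\left(\frac{v(E)}{v(Q)}\right)^p\lesssim\frac{w(E)}{w(Q)}.$$
When $p>1$ this follows by writing $v(E)=\int_E w^{1/p}w^{-1/p}\,dv$, applying H\"older with exponents $p,p'$ to get $v(E)\leq w(E)^{1/p}\big(\int_Q w^{-p'/p}\,dv\big)^{1/p'}$, and then inserting the $A^{\res}_p$ estimate for $\int_Q w^{-p'/p}\,dv$ at scale $3r$, which bounds it by a multiple of $v(Q)^{1+p'/p}w(Q)^{-p'/p}$; a short exponent bookkeeping then yields the displayed inequality. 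When $p=1$ the inequality is immediate from the $A^{\res}_1$ condition at scale $3r$, since $\int_Q w\,dv\lesssim v(Q)\,\mathrm{ess}\inf_Q w$ and $\mathrm{ess}\inf_Q w\leq w(E)/v(E)$. Choosing $E=Q_r(z)$, the concentric subcube, gives $v(E)/v(Q)=3^{-2n}$, hence $w(Q_{3r}(z))\lesssim 3^{2np}\,w(Q_r(z))$, which is the asserted doubling with a constant $C_1$ independent of $z$ (and $C_1\geq1$ since $Q_r(z)\subseteq Q_{3r}(z)$).

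For part (2) I would localize the doubling to neighboring lattice cubes. If $\nu,\nu'\in r\Z^{2n}$ are adjacent, i.e. differ by $r$ in a single coordinate, then both $Q_r(\nu)$ and $Q_r(\nu')$ lie in $Q_{3r}(\nu)$, so part (1) gives
$$w(Q_r(\nu'))\leq w(Q_{3r}(\nu))\leq C_1\,w(Q_r(\nu)),$$
and by symmetry $C_1^{-1}\leq w(Q_r(\nu))/w(Q_r(\nu'))\leq C_1$ for adjacent lattice points. For general $\nu,\nu'$ I would join them by a lattice path of $N$ unit steps, where $N=\|\nu-\nu'\|_1/r\leq \sqrt{2n}\,|\nu-\nu'|/r$; multiplying the one-step estimates telescopes to
$$\frac{w(Q_r(\nu))}{w(Q_r(\nu'))}\leq C_1^{N}\leq\left(C_1^{\sqrt{2n}/r}\right)^{|\nu-\nu'|}=:C_2^{|\nu-\nu'|},$$
which is the claim.

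The main technical point, and the place where the restricted setting genuinely matters, is securing the $A^{\res}_p$ inequality at the enlarged scale $3r$ in part (1): once the scale-$3r$ doubling is available, part (2) is a routine chaining argument. I would therefore concentrate on verifying that the ``some (or any) fixed $r$'' clause legitimately supplies a uniform constant at scale $3r$, and on the H\"older exponent computation that converts the $A^{\res}_p$ quotient into the exponent-$p$ comparison above; the $p=1$ endpoint should be handled separately but is simpler.
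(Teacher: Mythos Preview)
Your argument is correct. The paper itself does not supply a proof of this lemma; it simply cites \cite[Lemmas 3.2 and 3.4]{Is}. Your outline recovers the standard reasoning: the $A^{\res}_p$ (or $A^{\res}_1$) condition at scale $3r$ yields the comparison $(v(E)/v(Q))^p\lesssim w(E)/w(Q)$ on each $Q=Q_{3r}(z)$, which with $E=Q_r(z)$ gives part (1); part (2) then follows by a one-step neighbor estimate and telescoping along a lattice path. Your exponent bookkeeping in the H\"older step and your use of $\|\nu-\nu'\|_1/r$ as the step count are both fine.

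The one point worth flagging is exactly the one you identify: you are invoking the $A^{\res}_p$ condition at scale $3r$, and the legitimacy of this rests on the ``for some (or any) fixed $r>0$'' clause in the paper's definition. That equivalence of scales is itself a nontrivial fact (proved in \cite{Is}), so you are not proving the lemma from scratch but rather taking the scale-independence as part of the definition. Given how the paper states the class $A^{\res}_p$, this is entirely appropriate here.
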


\begin{remark}\label{remark}
There are two useful consequences of the above lemma.

(1) If $w\in A^{\res}_{\infty}$, then for any fixed $M,N\geq1$ and $r>0$,
$$w(Q_r(z))\asymp w(Q_{Nr}(u))\asymp w(B_r(z))\asymp w(B_{Nr}(u))$$
whenever $z,u\in\C^n$ satisfy $|z-u|<Mr$.

(2) For any $w\in A^{\res}_{\infty}$ and $\alpha>0$,
$$\int_{\C^n}e^{-\alpha|z|^2}w(z)dv(z)<\infty.$$
In fact, noting that for any $\nu\in\Z^{2n}$ and $z\in Q_1(\nu)$, $|\nu|^2\leq n+2|z|^2$, we can use Lemma \ref{esti} to obtain that
\begin{align*}
\int_{\C^n}e^{-\alpha|z|^2}w(z)dv(z)&=\sum_{\nu\in\Z^{2n}}\int_{Q_1(\nu)}e^{-\alpha|z|^2}w(z)dv(z)\\
&\lesssim\sum_{\nu\in\Z^{2n}}e^{-\frac{\alpha}{2}|\nu|^2}w(Q_1(\nu))\\
&\leq\sum_{\nu\in\Z^{2n}}e^{-\frac{\alpha}{2}|\nu|^2}C^{|\nu|}w(Q_1(0))\\
&\lesssim w(Q_1(0))<\infty.
\end{align*}
\end{remark}

The following lemma establishes some pointwise estimates for entire functions, which was proved in \cite[Lemma 3.1]{CFP} in the setting of one complex variable. The case of several complex variables is similar.

\begin{lemma}\label{pointwise}
Let $\alpha,p,r>0$ and $w\in A^{\res}_{\infty}$. Then for $f\in\h(\C^n)$ and $z\in\C^n$,
$$|f(z)|^pe^{-\frac{p\alpha}{2}|z|^2}\lesssim
\frac{1}{w(B_r(z))}\int_{B_r(z)}|f(u)|^pe^{-\frac{p\alpha}{2}|u|^2}w(u)dv(u),$$
where the implicit constant is independent of $f$ and $z$.
\end{lemma}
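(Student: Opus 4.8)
The plan is to prove the estimate in two stages: first establish the unweighted ($w\equiv1$, Lebesgue-measure) version as a pointwise sub-mean value inequality, and then upgrade it to an arbitrary $w\in A^{\res}_{\infty}$ by invoking the $A^{\res}_{p_0}$ condition that $w$ satisfies for some $p_0\ge1$. The starting observation is that while $u\mapsto|f(u)|^{s}e^{-\frac{s\alpha}{2}|u|^2}$ is not itself subharmonic, the offending cross term in the exponent can be manufactured by a holomorphic factor: for fixed $z$ the function $g_z(u):=f(u)e^{-\alpha\langle u,z\rangle}$ is entire, so $|g_z|^{s}$ is plurisubharmonic (indeed $\log|f|$ is plurisubharmonic and $t\mapsto e^{st}$ is convex increasing) and hence satisfies the sub-mean value inequality over the Euclidean ball $B_r(z)$.

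First I would record the unweighted estimate. Applying the sub-mean value inequality to $|g_z|^{s}$ and multiplying through by $e^{\frac{s\alpha}{2}|z|^2}$, the elementary identity $-s\alpha\,\mathrm{Re}\langle u,z\rangle+\tfrac{s\alpha}{2}|z|^2=-\tfrac{s\alpha}{2}|u|^2+\tfrac{s\alpha}{2}|u-z|^2$ turns the left side into $|f(z)|^{s}e^{-\frac{s\alpha}{2}|z|^2}$ and the integrand into $|f(u)|^{s}e^{-\frac{s\alpha}{2}|u|^2}e^{\frac{s\alpha}{2}|u-z|^2}$. Since $|u-z|<r$ on $B_r(z)$, the last factor is at most $e^{\frac{s\alpha}{2}r^2}$ and is absorbed into the implicit constant, yielding
$$|f(z)|^{s}e^{-\frac{s\alpha}{2}|z|^2}\lesssim\frac{1}{v(B_r(z))}\int_{B_r(z)}|f(u)|^{s}e^{-\frac{s\alpha}{2}|u|^2}\,dv(u)$$
for every $s>0$, where $v(B_r(z))$ is a constant depending only on $n$ and $r$. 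This is exactly the case $w\equiv1$, now available for any exponent $s$.

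Next I would upgrade to general $w$. Since $w\in A^{\res}_{\infty}=\bigcup_{p_0\ge1}A^{\res}_{p_0}$, I fix $p_0\ge1$ with $w\in A^{\res}_{p_0}$. If $p_0=1$, the $A^{\res}_1$ condition gives $w(B_r(z))/v(B_r(z))\lesssim\mathrm{ess}\inf_{B_r(z)}w$, so passing from $\int_{B_r(z)}(\cdots)\,dv$ to $\int_{B_r(z)}(\cdots)w\,dv$ costs only this factor and the claim follows immediately from the unweighted estimate with $s=p$. If $p_0>1$, I instead apply the unweighted estimate with the smaller exponent $s=p/p_0$ and write $\Phi:=|f|^{p}e^{-\frac{p\alpha}{2}|\cdot|^2}=\psi^{p_0}$ with $\psi:=|f|^{p/p_0}e^{-\frac{p\alpha}{2p_0}|\cdot|^2}$, so that $\Phi(z)\lesssim\big(\frac{1}{v(B_r(z))}\int_{B_r(z)}\psi\,dv\big)^{p_0}$. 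Hölder's inequality applied to $\int_{B_r(z)}\psi\,w^{1/p_0}w^{-1/p_0}\,dv$ with exponents $p_0,p_0'$ then converts $\psi^{p_0}$ back into $\Phi$ under the integral and produces the leftover factor $\big(\frac{1}{v(B_r(z))}\int_{B_r(z)}w^{-p_0'/p_0}\,dv\big)^{p_0/p_0'}$.

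The key point, and the only step that is not purely formal, is that this leftover factor times $v(B_r(z))^{-1}$ is comparable to $w(B_r(z))^{-1}$ precisely because of the $A^{\res}_{p_0}$ condition: after rearranging, the required bound $\frac{1}{v(B_r(z))}\big(\frac{1}{v(B_r(z))}\int_{B_r(z)}w^{-p_0'/p_0}\,dv\big)^{p_0/p_0'}\lesssim w(B_r(z))^{-1}$ is equivalent to $\big(\frac{1}{v(B_r(z))}\int_{B_r(z)}w\,dv\big)\big(\frac{1}{v(B_r(z))}\int_{B_r(z)}w^{-p_0'/p_0}\,dv\big)^{p_0/p_0'}\lesssim1$, which is the definition of $A^{\res}_{p_0}$ (with cubes replaced by balls of fixed radius, permissible by Remark \ref{remark}). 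This gives $\Phi(z)\lesssim\frac{1}{w(B_r(z))}\int_{B_r(z)}\Phi\,w\,dv$, as claimed. The obstacle to anticipate is exactly this weighting step: a naive Hölder estimate applied directly to $\int_{B_r(z)}\Phi\,dv$ would raise $\Phi$ to the power $p_0$ and destroy the linearity we need, so the exponent-splitting $s=p/p_0$ is what makes the $A^{\res}_{p_0}$ constant appear in the correct form.
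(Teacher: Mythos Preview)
Your argument is correct. The paper does not give its own proof of this lemma but simply cites \cite[Lemma~3.1]{CFP} and remarks that the several-variable case is analogous; your two-step approach (unweighted sub-mean-value inequality for $|f(\cdot)e^{-\alpha\langle\cdot,z\rangle}|^{s}$, followed by the H\"older/$A^{\res}_{p_0}$ upgrade with the exponent splitting $s=p/p_0$) is precisely the standard route used in that reference, so there is no substantive divergence to discuss.
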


For $u\in\C^n$, we write $K^{\alpha}_u(z):=e^{\alpha\langle z,u\rangle}$ for the reproducing kernels of the standard Fock space $F^2_{\alpha}$. The following lemma gives some test functions in $F^p_{\alpha,w}$; see for instance \cite[Propositions 4.1 and 4.2]{CFP}. 

\begin{lemma}\label{test}
Let $\alpha,p>0$ and $w\in A^{\res}_{\infty}$.
\begin{enumerate}
  \item [(1)] For any $u\in\C^n$,
  $$\left\|K^{\alpha}_u\right\|^p_{F^p_{\alpha,w}}\asymp e^{\frac{p\alpha}{2}|u|^2}w(B_1(u)).$$
  \item [(2)] For any sequence $c=\{c_{\nu}\}_{\nu\in\Z^{2n}}\in l^p(\Z^{2n})$, the function
  $$f:=\sum_{\nu\in\Z^{2n}}c_{\nu}\frac{K^{\alpha}_{\nu}}{\left\|K^{\alpha}_{\nu}\right\|_{F^p_{\alpha,w}}}$$
  belongs to $F^p_{\alpha,w}$ with $\|f\|_{F^p_{\alpha,w}}\lesssim\|c\|_{l^p(\Z^{2n})}$.
\end{enumerate}
\end{lemma}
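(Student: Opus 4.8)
The plan is to treat the two parts in turn, both hinging on a single Gaussian-against-weight estimate that is where the $A^{\res}_{\infty}$ hypothesis does its work.

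For part (1), I would begin from the definition, writing $|K^{\alpha}_u(z)|=e^{\alpha\,\mathrm{Re}\langle z,u\rangle}$ and using the elementary identity $\alpha\,\mathrm{Re}\langle z,u\rangle-\frac{\alpha}{2}|z|^2=\frac{\alpha}{2}|u|^2-\frac{\alpha}{2}|z-u|^2$ to obtain
$$\left\|K^{\alpha}_u\right\|^p_{F^p_{\alpha,w}}=e^{\frac{p\alpha}{2}|u|^2}\int_{\C^n}e^{-\frac{p\alpha}{2}|z-u|^2}w(z)\,dv(z).$$
This reduces part (1) to the claim $\int_{\C^n}e^{-\beta|z-u|^2}w(z)\,dv(z)\asymp w(B_1(u))$ for a fixed $\beta>0$, a comparison I also intend to reuse in part (2). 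This equivalence is the step I expect to be the main obstacle. The lower bound is immediate by restricting the integral to $B_1(u)$, where the Gaussian is bounded below by a constant. For the upper bound I would decompose $\C^n=\bigcup_{\nu\in\Z^{2n}}Q_1(\nu)$, estimate $e^{-\beta|z-u|^2}\lesssim e^{-\frac{\beta}{2}|\nu-u|^2}$ on $Q_1(\nu)$, and reduce matters to the series $\sum_{\nu}e^{-\frac{\beta}{2}|\nu-u|^2}w(Q_1(\nu))$. Letting $\nu_0$ be the lattice point nearest $u$, Lemma \ref{esti}(2) bounds $w(Q_1(\nu))\leq C_2^{|\nu-\nu_0|}w(Q_1(\nu_0))$ while Remark \ref{remark}(1) gives $w(Q_1(\nu_0))\asymp w(B_1(u))$; since the Gaussian decay dominates the exponential factor $C_2^{|\nu-\nu_0|}$, the series converges to a constant multiple of $w(B_1(u))$, as desired.

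For part (2), I would first use part (1) together with the same identity to record the pointwise comparison
$$\frac{|K^{\alpha}_{\nu}(z)|}{\left\|K^{\alpha}_{\nu}\right\|_{F^p_{\alpha,w}}}e^{-\frac{\alpha}{2}|z|^2}\asymp\frac{e^{-\frac{\alpha}{2}|z-\nu|^2}}{w(B_1(\nu))^{1/p}},$$
so that $\|f\|^p_{F^p_{\alpha,w}}$ is controlled by $\int_{\C^n}\big(\sum_{\nu}|c_{\nu}|\,w(B_1(\nu))^{-1/p}e^{-\frac{\alpha}{2}|z-\nu|^2}\big)^{p}w(z)\,dv(z)$. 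When $0<p\leq1$ I would invoke subadditivity $(\sum a_{\nu})^{p}\leq\sum a_{\nu}^{p}$, interchange sum and integral by Tonelli, and close with the key estimate. When $p>1$ I would split $e^{-\frac{\alpha}{2}|z-\nu|^2}=e^{-\frac{\alpha}{2p}|z-\nu|^2}e^{-\frac{\alpha}{2p'}|z-\nu|^2}$, apply Hölder in $\nu$ with exponents $p,p'$, and absorb the uniformly bounded lattice sum $\sum_{\nu}e^{-\frac{\alpha}{2}|z-\nu|^2}\lesssim1$; integrating against $w$ and applying the key estimate once more then yields $\|f\|^p_{F^p_{\alpha,w}}\lesssim\sum_{\nu}|c_{\nu}|^{p}=\|c\|^p_{l^p(\Z^{2n})}$ in both ranges.

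Finally, to confirm that $f$ is genuinely entire and not merely of finite norm, I would verify that on each ball $|z|\leq R$ the series converges uniformly: its terms are dominated by $e^{\frac{\alpha}{2}R^2}e^{-\frac{\alpha}{2}|z-\nu|^2}w(B_1(\nu))^{-1/p}$, and the Gaussian decay beats the at-most-exponential growth of $w(B_1(\nu))^{-1/p}$ furnished by Lemma \ref{esti}(2). Hence $f$ is a locally uniform limit of the (entire) partial sums, so $f\in\h(\C^n)$, completing the proof of the lemma.
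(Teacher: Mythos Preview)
The paper does not actually prove this lemma; it simply cites \cite[Propositions 4.1 and 4.2]{CFP} and moves on. Your argument is correct and self-contained, and it follows what is in essence the standard route one finds in that reference: reducing part (1) to the Gaussian-versus-weight comparison $\int_{\C^n}e^{-\beta|z-u|^2}w\,dv\asymp w(B_1(u))$ and proving the latter via the lattice decomposition and Lemma~\ref{esti}(2), then handling part (2) by the pointwise bound, subadditivity for $0<p\le1$ or H\"older for $p>1$, and the same Gaussian estimate; the local-uniform-convergence check for holomorphy is the natural closing step. Since the paper contains no proof of its own here, there is nothing further to compare against---your write-up supplies exactly what the citation omits.
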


We end this section by the following discretization lemmas.

\begin{lemma}\label{dis1}
Let $\gamma>0$, $\eta\in\mathbb{R}$, $w\in A^{\res}_{\infty}$, and let $\mu$ be a positive Borel measure on $\C^n$. Then
$$\sup_{z\in\C^n}\frac{\mu(B_1(z))^{\gamma}}{w(B_1(z))^{\eta}}\asymp\sup_{\nu\in\Z^{2n}}\frac{\mu(Q_1(\nu))^{\gamma}}{w(Q_1(\nu))^{\eta}}.$$
\end{lemma}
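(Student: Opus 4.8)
The plan is to prove the two-sided equivalence
\[
\sup_{z\in\C^n}\frac{\mu(B_1(z))^{\gamma}}{w(B_1(z))^{\eta}}\asymp\sup_{\nu\in\Z^{2n}}\frac{\mu(Q_1(\nu))^{\gamma}}{w(Q_1(\nu))^{\eta}}
\]
by establishing each inequality separately, using the geometric interplay between Euclidean balls of radius $1$ and the unit lattice cubes, together with the comparability of the weight $w$ over overlapping regions supplied by Remark \ref{remark}(1). The underlying principle is that $B_1(z)$ and the cube $Q_1(\nu)$ containing $z$ are mutually comparable: each is contained in a fixed dilate of the other, and both have uniformly comparable $w$-mass and $\mu$-mass up to boundedly many neighbors.

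For the direction $\lesssim$, I would fix $z\in\C^n$ and let $\nu\in\Z^{2n}$ be the lattice point with $z\in Q_1(\nu)$. Then $B_1(z)\subset Q_R(\nu)$ for some absolute $R$ (since the diameter of $Q_1(\nu)$ is bounded and $z$ lies in it, every point of $B_1(z)$ is within a fixed distance of $\nu$), and by the finite-overlap property $Q_R(\nu)$ is covered by a uniformly bounded number of unit cubes $Q_1(\nu')$ with $|\nu-\nu'|\le C$. Monotonicity of $\mu$ then gives $\mu(B_1(z))\le\mu(Q_R(\nu))\le\sum_{|\nu'-\nu|\le C}\mu(Q_1(\nu'))$. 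Meanwhile $w(B_1(z))\gtrsim w(Q_1(\nu))$ by Remark \ref{remark}(1), and Lemma \ref{esti}(2) lets me replace each nearby $w(Q_1(\nu'))$ by $w(Q_1(\nu))$ up to a constant depending only on the bounded displacement $|\nu-\nu'|\le C$. Here I must pay attention to the sign of $\eta$: depending on whether $\eta$ is positive or negative, bounding the denominator from below (or above) is the favorable direction, but in either case the local comparability $w(Q_1(\nu'))\asymp w(Q_1(\nu))$ for $|\nu'-\nu|\le C$ handles both simultaneously, since the constant $C_2^{|\nu-\nu'|}$ stays bounded. Combining these estimates bounds the ratio at $z$ by a constant times the supremum of the cube-ratios.

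For the reverse direction $\gtrsim$, I would fix $\nu\in\Z^{2n}$ and simply take $z=\nu$, so that $Q_1(\nu)\subset B_{\sqrt{n}}(\nu)=B_{\sqrt n}(z)$. Then $\mu(Q_1(\nu))\le\mu(B_{\sqrt n}(z))$, and by Remark \ref{remark}(1) (with $N=\sqrt n$, or by first enlarging and comparing) we have $w(B_{\sqrt n}(z))\asymp w(B_1(z))\asymp w(Q_1(\nu))$. Again taking care of the sign of $\eta$ to orient the weight comparison correctly, the ratio $\mu(Q_1(\nu))^{\gamma}/w(Q_1(\nu))^{\eta}$ is controlled by a constant times $\mu(B_{\sqrt n}(z))^{\gamma}/w(B_{\sqrt n}(z))^{\eta}$, which is bounded by the supremum over all $z\in\C^n$ of the ball-ratio. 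I expect the main (though still routine) obstacle to be the careful bookkeeping of the sign of $\eta$ and the finitely many neighboring cubes, ensuring that all the implicit constants from Lemma \ref{esti} and Remark \ref{remark} are genuinely independent of $z$ and $\nu$; once that geometric covering argument is set up cleanly, both inequalities follow directly.
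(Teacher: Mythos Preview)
Your approach is the same covering-and-comparability strategy as the paper's, and for the $\lesssim$ direction it matches the paper almost verbatim (the paper writes the covering explicitly via $\Gamma_z=\{\nu\in\Z^{2n}:B_1(z)\cap Q_1(\nu)\ne\emptyset\}$).

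For the $\gtrsim$ direction there is a small gap. By passing to $B_{\sqrt n}(\nu)$ you land on the quantity $\mu(B_{\sqrt n}(z))^{\gamma}/w(B_{\sqrt n}(z))^{\eta}$, but the supremum in the statement is over unit balls $B_1$, not $B_{\sqrt n}$. Remark~\ref{remark}(1) lets you compare $w$-masses at different scales, but it says nothing about $\mu$-masses, so the final sentence ``which is bounded by the supremum over all $z\in\C^n$ of the ball-ratio'' is not yet justified. To close the argument you must cover $B_{\sqrt n}(z)$ by boundedly many unit balls and repeat the finite-sum estimate $\bigl(\sum a_j\bigr)^{\gamma}\lesssim\sum a_j^{\gamma}$ together with the weight comparability---precisely the move you already made in the forward direction. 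The paper sidesteps this extra step by covering $Q_1(\nu)$ directly with unit balls $B_1(\nu')$ centered at points of the finer lattice $\sqrt{1/n}\,\Z^{2n}$ (so that $Q_{\sqrt{1/n}}(\nu')\subset B_1(\nu')$), which keeps the radius equal to $1$ from the start. With that missing step added (or with the finer-lattice covering in its place) your proof is complete and equivalent to the paper's.
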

\begin{proof}
We first consider the inequality ``$\gtrsim$''. Fix $\nu\in\Z^{2n}$ and write
$$I_{\nu}:=\left\{\nu'\in\sqrt{\frac{1}{n}}\Z^{2n}:Q_{\sqrt{\frac{1}{n}}}(\nu')\cap Q_1(\nu)\neq\emptyset\right\}.$$
Then we have
$$Q_1(\nu)\subset\bigcup_{\nu'\in I_{\nu}}Q_{\sqrt{\frac{1}{n}}}(\nu')\subset\bigcup_{\nu'\in I_{\nu}}B_1(\nu').$$
It is easy to verify that for $\nu'\in I_{\nu}$, $Q_{\sqrt{\frac{1}{n}}}(\nu')\subset Q_3(\nu)$, which implies that
$$|I_{\nu}|\leq3^{2n}n^n.$$
Consequently,
$$\mu(Q_1(\nu))^{\gamma}\leq\left(\sum_{\nu'\in I_{\nu}}\mu(B_1(\nu'))\right)^{\gamma}\lesssim\sum_{\nu'\in I_{\nu}}\mu(B_1(\nu'))^{\gamma},$$
which together with Remark \ref{remark} gives that
\begin{equation}\label{in1}
\frac{\mu(Q_1(\nu))^{\gamma}}{w(Q_1(\nu))^{\eta}}\lesssim\sum_{\nu'\in I_{\nu}}\frac{\mu(B_1(\nu'))^{\gamma}}{w(Q_1(\nu))^{\eta}}
\asymp\sum_{\nu'\in I_{\nu}}\frac{\mu(B_1(\nu'))^{\gamma}}{w(B_1(\nu'))^{\eta}}.
\end{equation}
Since $\nu\in\Z^{2n}$ is arbitrary, we obtain that
$$\sup_{\nu\in\Z^{2n}}\frac{\mu(Q_1(\nu))^{\gamma}}{w(Q_1(\nu))^{\eta}}
\lesssim\sup_{\nu\in\Z^{2n}}\sum_{\nu'\in I_{\nu}}\frac{\mu(B_1(\nu'))^{\gamma}}{w(B_1(\nu'))^{\eta}}
\leq3^{2n}n^n\sup_{z\in\C^n}\frac{\mu(B_1(z))^{\gamma}}{w(B_1(z))^{\eta}}.$$

We next consider the inequality ``$\lesssim$''. Fix $z\in\C^n$ and write
$$\Gamma_z:=\left\{\nu\in\Z^{2n}:B_1(z)\cap Q_1(\nu)\neq\emptyset\right\}.$$
Then $B_1(z)\subset\bigcup_{\nu\in\Gamma_z}Q_1(\nu)$, and for any $\nu\in\Gamma_z$, $B_{\frac{1}{2}}(\nu)\subset B_{3\sqrt{n}}(z)$, which implies that
$$|\Gamma_z|\leq6^{2n}n^n.$$
Consequently,
$$\mu(B_1(z))^{\gamma}\leq\left(\sum_{\nu\in\Gamma_z}\mu(Q_1(\nu))\right)^{\gamma}\lesssim\sum_{\nu\in\Gamma_z}\mu(Q_1(\nu))^{\gamma},$$
which, in conjunction with Remark \ref{remark}, yields that
\begin{equation}\label{in2}
\frac{\mu(B_1(z))^{\gamma}}{w(B_1(z))^{\eta}}\lesssim\sum_{\nu\in\Gamma_z}\frac{\mu(Q_1(\nu))^{\gamma}}{w(B_1(z))^{\eta}}
\asymp\sum_{\nu\in\Gamma_z}\frac{\mu(Q_1(\nu))^{\gamma}}{w(Q_1(\nu))^{\eta}}.
\end{equation}
Therefore,
$$\sup_{z\in\C^n}\frac{\mu(B_1(z))^{\gamma}}{w(B_1(z))^{\eta}}
\lesssim\sup_{z\in\C^n}\sum_{\nu\in\Gamma_z}\frac{\mu(Q_1(\nu))^{\gamma}}{w(Q_1(\nu))^{\eta}}
\leq 6^{2n}n^n\sup_{\nu\in\Z^{2n}}\frac{\mu(Q_1(\nu))^{\gamma}}{w(Q_1(\nu))^{\eta}}.$$
The proof is complete.
\end{proof}

\begin{lemma}\label{dis2}
Let $\gamma>0$, $\eta\in\mathbb{R}$, $w\in A^{\res}_{\infty}$, and let $\mu$ be a positive Borel measure on $\C^n$. Then
$$\int_{\C^n}\frac{\mu(B_1(z))^{\gamma}}{w(B_1(z))^{\eta}}dv(z)\asymp
\sum_{\nu\in\Z^{2n}}\frac{\mu(Q_1(\nu))^{\gamma}}{w(Q_1(\nu))^{\eta}}.$$
\begin{proof}
Similar to \eqref{in1}, for any $\nu\in\Z^{2n}$, we have
$$\frac{\mu(Q_1(\nu))^{\gamma}}{w(Q_1(\nu))^{\eta}}
\lesssim\sum_{\nu'\in\tilde{I}_{\nu}}\frac{\mu(B_{1/2}(\nu'))^{\gamma}}{w(B_{1/2}(\nu'))^{\eta}},$$
where
$$\tilde{I}_{\nu}=\left\{\nu'\in\sqrt{\frac{1}{4n}}\Z^{2n}:Q_{\sqrt{\frac{1}{4n}}}(\nu')\cap Q_1(\nu)\neq\emptyset\right\}.$$
Arguing as in the proof of Lemma \ref{dis1}, we know that for any $\nu'\in\sqrt{\frac{1}{4n}}\Z^{2n}$, the cardinal of the set $\{\nu\in\Z^{2n}:\nu'\in\tilde{I}_{\nu}\}$ is bounded by some constant depending only on $n$. Therefore, noting that for any $z\in B_{1/2}(\nu')$, $B_{1/2}(\nu')\subset B_1(z)$, we establish that
\begin{align*}
\sum_{\nu\in\Z^{2n}}\frac{\mu(Q_1(\nu))^{\gamma}}{w(Q_1(\nu))^{\eta}}
&\lesssim\sum_{\nu\in\Z^{2n}}\sum_{\nu'\in\tilde{I}_{\nu}}\frac{\mu(B_{1/2}(\nu'))^{\gamma}}{w(B_{1/2}(\nu'))^{\eta}}\\
&\asymp\sum_{\nu'\in\sqrt{\frac{1}{4n}}\Z^{2n}}\sum_{\nu\in\Z^{2n}:\nu'\in\tilde{I}_{\nu}}
    \int_{B_{1/2}(\nu')}\frac{\mu(B_{1/2}(\nu'))^{\gamma}}{w(B_{1/2}(\nu'))^{\eta}}dv(z)\\
&\lesssim\sum_{\nu'\in\sqrt{\frac{1}{4n}}\Z^{2n}}\int_{B_{1/2}(\nu')}\frac{\mu(B_{1}(z))^{\gamma}}{w(B_{1}(z))^{\eta}}dv(z)\\
&\lesssim\int_{\C^n}\frac{\mu(B_{1}(z))^{\gamma}}{w(B_{1}(z))^{\eta}}dv(z).
\end{align*}

Conversely, by \eqref{in2},
\begin{align*}
\int_{\C^n}\frac{\mu(B_1(z))^{\gamma}}{w(B_1(z))^{\eta}}dv(z)
&\lesssim\int_{\C^n}\sum_{\nu\in\Gamma_z}\frac{\mu(Q_1(\nu))^{\gamma}}{w(Q_1(\nu))^{\eta}}dv(z)\\
&=\sum_{\nu\in\Z^{2n}}\int_{\{z\in\C^n:B_1(z)\cap Q_1(\nu)\neq\emptyset\}}
  \frac{\mu(Q_1(\nu))^{\gamma}}{w(Q_1(\nu))^{\eta}}dv(z)\\
&\lesssim\sum_{\nu\in\Z^{2n}}\frac{\mu(Q_1(\nu))^{\gamma}}{w(Q_1(\nu))^{\eta}},
\end{align*}
which finishes the proof.
\end{proof}
\end{lemma}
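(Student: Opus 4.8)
The plan is to move between the integral and the sum by a standard discretization, exploiting that $w\in A^{\res}_{\infty}$ is doubling so that, by Remark \ref{remark}, $w(B_1(z))$, $w(Q_1(\nu))$ and $w(B_{1/2}(\nu'))$ are all comparable whenever the underlying sets lie within a fixed distance of one another. Since $\eta$ may be negative, throughout I would only invoke the \emph{two-sided} comparison $w(B_r(\cdot))\asymp w(Q_{r'}(\cdot'))$ from Remark \ref{remark}, which survives being raised to any real power $\eta$. The two inequalities are then proved separately, each reusing a pointwise bound already obtained inside the proof of Lemma \ref{dis1}.

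For the direction ``$\int\lesssim\sum$'', I would begin from the pointwise estimate \eqref{in2} established in the proof of Lemma \ref{dis1}: for each $z\in\C^n$, with $\Gamma_z=\{\nu\in\Z^{2n}:B_1(z)\cap Q_1(\nu)\neq\emptyset\}$,
$$\frac{\mu(B_1(z))^{\gamma}}{w(B_1(z))^{\eta}}\lesssim\sum_{\nu\in\Gamma_z}\frac{\mu(Q_1(\nu))^{\gamma}}{w(Q_1(\nu))^{\eta}}.$$
Integrating in $z$ and interchanging the nonnegative sum with the integral by Tonelli, the contribution of a fixed $\nu$ equals $\mu(Q_1(\nu))^{\gamma}w(Q_1(\nu))^{-\eta}$ times the Lebesgue measure of $\{z:B_1(z)\cap Q_1(\nu)\neq\emptyset\}$. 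That set is contained in a fixed dilate of $Q_1(\nu)$, so its volume is bounded by a constant depending only on $n$; summing over $\nu$ gives the bound.

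For the reverse direction ``$\sum\lesssim\int$'', the key is to discretize each cube term by a \emph{finer} lattice. Arguing as in \eqref{in1}, but covering $Q_1(\nu)$ by balls of radius $1/2$ centered at points $\nu'$ of a rescaled lattice $\tilde I_\nu$ of cardinality bounded in terms of $n$ only, the power inequality $(\sum a_{\nu'})^{\gamma}\lesssim\sum a_{\nu'}^{\gamma}$ (automatic for $\gamma\le1$, and using $|\tilde I_\nu|\lesssim_n 1$ for $\gamma>1$) together with the weight comparison yields
$$\frac{\mu(Q_1(\nu))^{\gamma}}{w(Q_1(\nu))^{\eta}}\lesssim\sum_{\nu'\in\tilde I_\nu}\frac{\mu(B_{1/2}(\nu'))^{\gamma}}{w(B_{1/2}(\nu'))^{\eta}}.$$
The point of shrinking the radius is the inclusion $B_{1/2}(\nu')\subset B_1(z)$ valid for every $z\in B_{1/2}(\nu')$, which lets me replace the constant summand by its average over $B_{1/2}(\nu')$ and then enlarge the ball inside $\mu$ and $w$:
$$\frac{\mu(B_{1/2}(\nu'))^{\gamma}}{w(B_{1/2}(\nu'))^{\eta}}\asymp\int_{B_{1/2}(\nu')}\frac{\mu(B_{1/2}(\nu'))^{\gamma}}{w(B_{1/2}(\nu'))^{\eta}}\,dv(z)\lesssim\int_{B_{1/2}(\nu')}\frac{\mu(B_1(z))^{\gamma}}{w(B_1(z))^{\eta}}\,dv(z),$$
using $\mu(B_{1/2}(\nu'))\le\mu(B_1(z))$ and $w(B_{1/2}(\nu'))\asymp w(B_1(z))$ from Remark \ref{remark}. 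Summing first over $\nu'\in\tilde I_\nu$ and then over $\nu$, and noting that each $\nu'$ belongs to $\tilde I_\nu$ for only boundedly many $\nu$, collapses the double sum into a single integral over $\C^n$.

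I do not expect a genuine obstacle here; the work is bookkeeping. The two facts to handle with care are the bounded-cardinality/bounded-overlap counts ($|\Gamma_z|\lesssim_n1$, $|\tilde I_\nu|\lesssim_n1$, and dually the bounded multiplicity of the fine lattice), exactly as in Lemma \ref{dis1}. The only genuinely new device relative to Lemma \ref{dis1} is the radius-shrinking trick $B_{1/2}(\nu')\subset B_1(z)$, which converts the discrete sum into an integral of the \emph{same} integrand $\mu(B_1(\cdot))^{\gamma}/w(B_1(\cdot))^{\eta}$; one must confirm it is valid for negative $\eta$, but this is immediate because only the two-sided weight comparison of Remark \ref{remark} is used.
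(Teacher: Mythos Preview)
Your proposal is correct and follows essentially the same route as the paper's proof: both directions use the pointwise bounds \eqref{in1} and \eqref{in2} from Lemma \ref{dis1}, the finer-lattice covering by radius-$1/2$ balls together with the inclusion $B_{1/2}(\nu')\subset B_1(z)$ for $z\in B_{1/2}(\nu')$, and the bounded-overlap counts. Your attention to the sign of $\eta$ and the two-sided weight comparison via Remark \ref{remark} matches the paper's implicit reasoning.
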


\section{Proof of Theorem \ref{main1}}\label{proof1}

The purpose of this section is to prove Theorem \ref{main1}, which is based on the following proposition.

\begin{proposition}\label{suff}
Let $0<p,q,t,\alpha,\beta<\infty$, $w\in A^{\res}_{\infty}$, and let $\mu$ be a positive Borel measure on $\C^n$. Then for any $f\in F^p_{\alpha,w}$,
\begin{align*}
\int_{\C^n}&S^{t,\alpha,\beta}_{\mu}f(z)^qw(z)dv(z)\\
&\lesssim\sum_{\nu\in\Z^{2n}}\frac{\mu(Q_1(\nu))^{\frac{q}{t}}}{w(Q_1(\nu))^{\frac{q}{p}-1}}
\left(\int_{B_{2n}(\nu)}|f(u)|^pe^{-\frac{p\alpha}{2}|u|^2}w(u)dv(u)\right)^{\frac{q}{p}},
\end{align*}
where the implicit constant is independent of $f$ and $\mu$.
\end{proposition}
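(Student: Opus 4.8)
The plan is to discretize the integral defining $S^{t,\alpha,\beta}_{\mu}f(z)^q$ by breaking $\C^n$ into the unit cubes $\{Q_1(\nu)\}_{\nu\in\Z^{2n}}$ and controlling each piece using the rapid Gaussian decay together with the doubling/slow-growth properties of $w$ from Lemma \ref{esti}. First I would write
$$\int_{\C^n}S^{t,\alpha,\beta}_{\mu}f(z)^q w(z)\,dv(z)
=\sum_{\nu\in\Z^{2n}}\int_{Q_1(\nu)}\left(\int_{\C^n}e^{-\frac{\beta}{2}|z-u|^2}|f(u)|^t e^{-\frac{\alpha t}{2}|u|^2}\,d\mu(u)\right)^{q/t}w(z)\,dv(z),$$
and then split the inner $u$-integral over the cubes $Q_1(\nu')$ as well. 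The Gaussian factor $e^{-\frac{\beta}{2}|z-u|^2}$ is essentially $e^{-c|\nu-\nu'|^2}$ when $z\in Q_1(\nu)$ and $u\in Q_1(\nu')$, so the double sum over $(\nu,\nu')$ should decouple after summing the exponential tails.

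The key analytic input is Lemma \ref{pointwise}, which lets me replace the integrand $|f(u)|^t e^{-\frac{\alpha t}{2}|u|^2}$ by its local $L^p$-average: raising the pointwise estimate to the $t/p$ power gives
$$|f(u)|^t e^{-\frac{\alpha t}{2}|u|^2}\lesssim \left(\frac{1}{w(B_r(u))}\int_{B_r(u)}|f|^p e^{-\frac{p\alpha}{2}|\cdot|^2}w\,dv\right)^{t/p}.$$
Integrating this in $d\mu(u)$ over $Q_1(\nu')$ and using $w(B_r(u))\asymp w(Q_1(\nu'))$ (Remark \ref{remark}) produces a factor $\mu(Q_1(\nu'))$ times the $t/p$-th power of the local energy $E_{\nu'}:=\int_{B_{r'}(\nu')}|f|^p e^{-\frac{p\alpha}{2}|\cdot|^2}w\,dv$ (enlarging the ball so that $B_r(u)\subset B_{r'}(\nu')$ for all $u\in Q_1(\nu')$). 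After raising to the $q/t$ power and integrating the outer $z$-variable over $Q_1(\nu)$—which contributes $w(Q_1(\nu))$—I arrive at a double sum
$$\sum_{\nu,\nu'} e^{-c|\nu-\nu'|^2}\,w(Q_1(\nu))\,\frac{\mu(Q_1(\nu'))^{q/t}}{w(Q_1(\nu'))^{q/p}}\,E_{\nu'}^{q/p}.$$

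The main obstacle is handling the off-diagonal terms, i.e.\ collapsing this double sum to the stated diagonal sum in $\nu'$ alone. The essential point is that $w(Q_1(\nu))\lesssim C^{|\nu-\nu'|}w(Q_1(\nu'))$ by Lemma \ref{esti}(2), so each summand is bounded by $e^{-c|\nu-\nu'|^2}C^{|\nu-\nu'|}$ times the diagonal quantity $\mu(Q_1(\nu'))^{q/t}w(Q_1(\nu'))^{1-q/p}E_{\nu'}^{q/p}$; since $\sum_{\nu} e^{-c|\nu-\nu'|^2}C^{|\nu-\nu'|}$ converges to a finite constant independent of $\nu'$, summing in $\nu$ first eliminates the off-diagonal direction. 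A secondary subtlety arises when $q/t<1$ or $q/p<1$, where I must use the elementary inequality $(\sum a_j)^{s}\lesssim \sum a_j^{s}$ (valid for $0<s\le 1$) when expanding the $q/t$-th power of the sum over $\nu'$; for $s\ge 1$ I would instead absorb a convergent geometric factor via Hölder or Jensen applied to the probability-like measure $e^{-c|\nu-\nu'|^2}\,d\nu'$. Once the sum is reduced to the diagonal, relabeling $\nu'\mapsto\nu$ and enlarging $B_{r'}(\nu)$ to $B_{2n}(\nu)$ gives exactly the claimed estimate, with all constants independent of $f$ and $\mu$.
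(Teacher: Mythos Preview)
Your proposal is correct and follows essentially the same route as the paper's proof: discretize both integrals over the lattice $\Z^{2n}$, replace $|f(\xi)|^t e^{-\frac{\alpha t}{2}|\xi|^2}$ by its local $L^p$-average via Lemma~\ref{pointwise}, split into the cases $q/t\le 1$ (subadditivity) and $q/t>1$ (H\"older against the convergent Gaussian sum), and then collapse the off-diagonal sum using Lemma~\ref{esti}(2) since $\sum_{\nu}e^{-c|\nu-\nu'|^2}C^{|\nu-\nu'|}<\infty$. The only cosmetic differences are that the paper swaps the roles of $\nu$ and $\nu'$ and takes $r=1$ directly in Lemma~\ref{pointwise} (so that $B_1(\xi)\subset B_{2n}(\nu)$ for $\xi\in Q_1(\nu)$), and your mention of a ``$q/p<1$'' subtlety is unnecessary since no sum is ever raised to the $q/p$ power.
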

\begin{proof}
Note that for any $\nu,\nu'\in\Z^{2n}$ and $z\in Q_1(\nu')$, $\xi\in Q_1(\nu)$, $e^{-\frac{\beta}{2}|z-\xi|^2}\lesssim e^{-\frac{\beta}{4}|\nu-\nu'|^2}$. We have that
\begin{align*}
&\int_{\C^n}S^{t,\alpha,\beta}_{\mu}f(z)^qw(z)dv(z)\\
&\ =\int_{\C^n}\left(\int_{\C^n}e^{-\frac{\beta}{2}|z-\xi|^2}|f(\xi)|^te^{-\frac{\alpha t}{2}|\xi|^2}d\mu(\xi)\right)^{\frac{q}{t}}w(z)dv(z)\\
&\ \lesssim\sum_{\nu'\in\Z^{2n}}\int_{Q_1(\nu')}\left(\sum_{\nu\in\Z^{2n}}e^{-\frac{\beta}{4}|\nu-\nu'|^2}
  \int_{Q_1(\nu)}|f(\xi)|^te^{-\frac{\alpha t}{2}|\xi|^2}d\mu(\xi)\right)^{\frac{q}{t}}w(z)dv(z)\\
&\ =\sum_{\nu'\in\Z^{2n}}w(Q_1(\nu'))\left(\sum_{\nu\in\Z^{2n}}e^{-\frac{\beta}{4}|\nu-\nu'|^2}
  \int_{Q_1(\nu)}|f(\xi)|^te^{-\frac{\alpha t}{2}|\xi|^2}d\mu(\xi)\right)^{\frac{q}{t}}.
\end{align*}
Lemma \ref{pointwise} together with Remark \ref{remark} implies that for any $\xi\in Q_1(\nu)$,
\begin{align*}
|f(\xi)|^te^{-\frac{\alpha t}{2}|\xi|^2}
&\lesssim\left(\frac{1}{w(B_1(\xi))}\int_{B_1(\xi)}|f(u)|^pe^{-\frac{p\alpha}{2}|u|^2}w(u)dv(u)\right)^{\frac{t}{p}}\\
&\lesssim\frac{1}{w(Q_1(\nu))^{t/p}}\left(\int_{B_{2n}(\nu)}|f(u)|^pe^{-\frac{p\alpha}{2}|u|^2}w(u)dv(u)\right)^{\frac{t}{p}}.
\end{align*}
Therefore,
\begin{align*}
\int_{\C^n}&S^{t,\alpha,\beta}_{\mu}f(z)^qw(z)dv(z)\\
&\lesssim\sum_{\nu'\in\Z^{2n}}w(Q_1(\nu'))\left(\sum_{\nu\in\Z^{2n}}e^{-\frac{\beta}{4}|\nu-\nu'|^2}
  \frac{\mu(Q_1(\nu))}{w(Q_1(\nu))^{\frac{t}{p}}}
  \widehat{|f|^p}(\nu)^{\frac{t}{p}}\right)^{\frac{q}{t}},
\end{align*}
where
$$\widehat{|f|^p}(\nu)=\int_{B_{2n}(\nu)}|f(u)|^pe^{-\frac{p\alpha}{2}|u|^2}w(u)dv(u).$$
In the case $q>t$, we can apply H\"{o}lder's inequality and Lemma \ref{esti} to obtain that
\begin{align*}
&\int_{\C^n}S^{t,\alpha,\beta}_{\mu}f(z)^qw(z)dv(z)\\
&\ \lesssim\sum_{\nu'\in\Z^{2n}}w(Q_1(\nu'))\left(\sum_{\nu\in\Z^{2n}}e^{-\frac{\beta}{4}|\nu-\nu'|^2}
    \frac{\mu(Q_1(\nu))}{w(Q_1(\nu))^{\frac{t}{p}}}
    \widehat{|f|^p}(\nu)^{\frac{t}{p}}\right)^{\frac{q}{t}}\\
&\ \leq\sum_{\nu'\in\Z^{2n}}w(Q_1(\nu'))\left(\sum_{\nu\in\Z^{2n}}e^{-\frac{\beta}{4}|\nu-\nu'|^2}\right)^{\frac{q-t}{t}}
    \cdot\left(\sum_{\nu\in\Z^{2n}}e^{-\frac{\beta}{4}|\nu-\nu'|^2}\frac{\mu(Q_1(\nu))^{\frac{q}{t}}}{w(Q_1(\nu))^{\frac{q}{p}}}
    \widehat{|f|^p}(\nu)^{\frac{q}{p}}\right)\\
&\ \lesssim\sum_{\nu'\in\Z^{2n}}w(Q_1(\nu'))\sum_{\nu\in\Z^{2n}}e^{-\frac{\beta}{4}|\nu-\nu'|^2}
    \frac{\mu(Q_1(\nu))^{\frac{q}{t}}}{w(Q_1(\nu))^{\frac{q}{p}}}
    \widehat{|f|^p}(\nu)^{\frac{q}{p}}\\
&\ \leq\sum_{\nu\in\Z^{2n}}\sum_{\nu'\in\Z^{2n}}e^{-\frac{\beta}{4}|\nu-\nu'|^2}C^{|\nu-\nu'|}
    \frac{\mu(Q_1(\nu))^{\frac{q}{t}}}{w(Q_1(\nu))^{\frac{q}{p}-1}}
    \widehat{|f|^p}(\nu)^{\frac{q}{p}}\\
&\ \lesssim\sum_{\nu\in\Z^{2n}}\frac{\mu(Q_1(\nu))^{\frac{q}{t}}}{w(Q_1(\nu))^{\frac{q}{p}-1}}\widehat{|f|^p}(\nu)^{\frac{q}{p}}.
\end{align*}
In the case $q\leq t$, we have
\begin{align*}
&\int_{\C^n}S^{t,\alpha,\beta}_{\mu}f(z)^qw(z)dv(z)\\
&\ \lesssim\sum_{\nu'\in\Z^{2n}}w(Q_1(\nu'))\left(\sum_{\nu\in\Z^{2n}}e^{-\frac{\beta}{4}|\nu-\nu'|^2}
    \frac{\mu(Q_1(\nu))}{w(Q_1(\nu))^{\frac{t}{p}}}\widehat{|f|^p}(\nu)^{t/p}\right)^{\frac{q}{t}}\\
&\ \leq\sum_{\nu'\in\Z^{2n}}w(Q_1(\nu'))\sum_{\nu\in\Z^{2n}}e^{-\frac{q\beta}{4t}|\nu-\nu'|^2}
    \frac{\mu(Q_1(\nu))^{\frac{q}{t}}}{w(Q_1(\nu))^{\frac{q}{p}}}
    \widehat{|f|^p}(\nu)^{\frac{q}{p}}\\
&\ \leq\sum_{\nu\in\Z^{2n}}\sum_{\nu'\in\Z^{2n}}e^{-\frac{q\beta}{4t}|\nu-\nu'|^2}C^{|\nu-\nu'|}
    \frac{\mu(Q_1(\nu))^{\frac{q}{t}}}{w(Q_1(\nu))^{\frac{q}{p}-1}}\widehat{|f|^p}(\nu)^{\frac{q}{p}}\\
&\ \lesssim\sum_{\nu\in\Z^{2n}}\frac{\mu(Q_1(\nu))^{\frac{q}{t}}}{w(Q_1(\nu))^{\frac{q}{p}-1}}\widehat{|f|^p}(\nu)^{\frac{q}{p}}.
\end{align*}
The proof is complete.
\end{proof}

\subsection{Boundedness}

Recall that  the function $G_{\mu}$ is defined by
$$G_{\mu}(z)=\frac{\mu(B_1(z))^{\frac{1}{t}}}{w(B_1(z))^{\frac{1}{p}-\frac{1}{q}}},\quad z\in\C^n.$$
The following theorem characterize the boundedness of $S^{t,\alpha,\beta}_{\mu}:F^p_{\alpha,w}\to L^q(wdv)$ in the case $p\leq q$.

\begin{theorem}\label{bdd<}
Let $0<p\leq q<\infty$, $0<t,\alpha,\beta<\infty$, $w\in A^{\res}_{\infty}$, and let $\mu$ be a positive Borel measure on $\C^n$. Then $S^{t,\alpha,\beta}_{\mu}:F^p_{\alpha,w}\to L^q(wdv)$ is bounded if and only if $G_{\mu}$ is bounded on $\C^n$. Moreover,
$$\left\|S^{t,\alpha,\beta}_{\mu}\right\|_{F^p_{\alpha,w}\to L^q(wdv)}\asymp\sup_{z\in\C^n}G_{\mu}(z).$$
\end{theorem}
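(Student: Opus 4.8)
The plan is to prove the two inequalities in the asserted norm equivalence separately, obtaining the biconditional as a byproduct. For the upper bound (sufficiency) I would start from the discretized estimate of Proposition \ref{suff}. Writing $\widehat{|f|^p}(\nu)=\int_{B_{2n}(\nu)}|f(u)|^pe^{-\frac{p\alpha}{2}|u|^2}w(u)\,dv(u)$, that proposition gives
$$\int_{\C^n}S^{t,\alpha,\beta}_{\mu}f(z)^qw(z)\,dv(z)\lesssim\sum_{\nu\in\Z^{2n}}\frac{\mu(Q_1(\nu))^{q/t}}{w(Q_1(\nu))^{q/p-1}}\widehat{|f|^p}(\nu)^{q/p}.$$
Since $G_{\mu}(z)^q=\mu(B_1(z))^{q/t}/w(B_1(z))^{q/p-1}$, applying Lemma \ref{dis1} with $\gamma=q/t$ and $\eta=q/p-1$ converts the hypothesis $\sup_z G_{\mu}(z)<\infty$ into the uniform discrete bound $\frac{\mu(Q_1(\nu))^{q/t}}{w(Q_1(\nu))^{q/p-1}}\lesssim(\sup_z G_{\mu}(z))^q$. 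Pulling this constant out of the sum leaves $\sum_\nu\widehat{|f|^p}(\nu)^{q/p}$.

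The next step, valid precisely because $p\leq q$, is the elementary embedding $\ell^1\hookrightarrow\ell^{q/p}$: as $q/p\geq1$, one has $\sum_\nu a_\nu^{q/p}\leq\bigl(\sum_\nu a_\nu\bigr)^{q/p}$ for nonnegative $a_\nu$, so that $\sum_\nu\widehat{|f|^p}(\nu)^{q/p}\leq\bigl(\sum_\nu\widehat{|f|^p}(\nu)\bigr)^{q/p}$. Because the balls $\{B_{2n}(\nu)\}_{\nu\in\Z^{2n}}$ have finite overlap (with multiplicity bounded by a constant depending only on $n$), $\sum_\nu\widehat{|f|^p}(\nu)\lesssim\|f\|_{F^p_{\alpha,w}}^p$. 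Taking $q$-th roots yields $\|S^{t,\alpha,\beta}_{\mu}f\|_{L^q(wdv)}\lesssim(\sup_z G_{\mu}(z))\|f\|_{F^p_{\alpha,w}}$, i.e. the upper estimate for the operator norm.

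For the lower bound (necessity) I would test the operator against the reproducing kernels. Fix $a\in\C^n$ and take $f=K^{\alpha}_a$. Using the identity $\mathrm{Re}\langle u,a\rangle-\tfrac12|u|^2=\tfrac12|a|^2-\tfrac12|u-a|^2$ one gets $|K^{\alpha}_a(u)|^te^{-\frac{\alpha t}{2}|u|^2}=e^{\frac{\alpha t}{2}|a|^2}e^{-\frac{\alpha t}{2}|u-a|^2}\gtrsim e^{\frac{\alpha t}{2}|a|^2}$ for $u\in B_1(a)$. Restricting the defining integral to $u\in B_1(a)$ and $z\in B_1(a)$, where $e^{-\frac{\beta}{2}|z-u|^2}\gtrsim1$, gives $S^{t,\alpha,\beta}_{\mu}(K^{\alpha}_a)(z)\gtrsim e^{\frac{\alpha}{2}|a|^2}\mu(B_1(a))^{1/t}$ on $B_1(a)$. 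Integrating the $q$-th power over $B_1(a)$ against $w$ yields $\|S^{t,\alpha,\beta}_{\mu}(K^{\alpha}_a)\|_{L^q(wdv)}^q\gtrsim e^{\frac{q\alpha}{2}|a|^2}\mu(B_1(a))^{q/t}w(B_1(a))$. Comparing with $\|S^{t,\alpha,\beta}_{\mu}(K^{\alpha}_a)\|_{L^q(wdv)}^q\leq\|S^{t,\alpha,\beta}_{\mu}\|^q\|K^{\alpha}_a\|_{F^p_{\alpha,w}}^q$ and invoking $\|K^{\alpha}_a\|_{F^p_{\alpha,w}}^p\asymp e^{\frac{p\alpha}{2}|a|^2}w(B_1(a))$ from Lemma \ref{test}(1), the Gaussian factors $e^{\frac{q\alpha}{2}|a|^2}$ cancel and solving for the quotient gives $G_{\mu}(a)\lesssim\|S^{t,\alpha,\beta}_{\mu}\|$; taking the supremum over $a$ finishes this direction.

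I expect the main point requiring care to be the matching of the two passages between balls and cubes: one must apply Lemma \ref{dis1} with exponents ($\gamma=q/t$, $\eta=q/p-1$) that reproduce exactly the weight power $w(Q_1(\nu))^{q/p-1}$ appearing in Proposition \ref{suff}, and the summation step must genuinely use $q\geq p$. It is precisely this $\ell^1\hookrightarrow\ell^{q/p}$ inequality that fails for $p>q$, which explains why the dichotomy in Theorem \ref{main1} is unavoidable. The necessity half is more routine but relies on the Gaussian algebra pulling out $e^{\frac{\alpha t}{2}|a|^2}$ cleanly so that it cancels against the kernel norm.
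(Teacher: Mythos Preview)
Your proposal is correct and follows essentially the same approach as the paper. The only cosmetic difference is that the paper tests against the \emph{normalized} kernel $f_z=K^{\alpha}_z/\|K^{\alpha}_z\|_{F^p_{\alpha,w}}$ rather than $K^{\alpha}_a$ itself, but this amounts to the same computation; the sufficiency argument via Proposition~\ref{suff}, Lemma~\ref{dis1}, and the $\ell^1\hookrightarrow\ell^{q/p}$ inequality is identical.
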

\begin{proof}
Suppose first that $S^{t,\alpha,\beta}_{\mu}:F^p_{\alpha,w}\to L^q(wdv)$ is bounded. For any $z\in\C^n$, define
$$f_z(u):=\frac{e^{\alpha\langle u,z\rangle-\frac{\alpha}{2}|z|^2}}{w(B_1(z))^{1/p}},\quad u\in\C^n.$$
Then by Lemma \ref{test}, $f_z\in F^p_{\alpha,w}$ with $\|f_z\|_{F^p_{\alpha,w}}\asymp1$. Consequently,
\begin{align}\label{bdd-nece}
\nonumber\left\|S^{t,\alpha,\beta}_{\mu}\right\|^q_{F^p_{\alpha,w}\to L^q(wdv)}
\nonumber&\gtrsim\int_{\C^n}S^{t,\alpha,\beta}_{\mu}f_z(\xi)^qw(\xi)dv(\xi)\\
\nonumber&=\int_{\C^n}\left(\int_{\C^n}e^{-\frac{\beta}{2}|\xi-u|^2}|f_z(u)|^te^{-\frac{\alpha t}{2}|u|^2}d\mu(u)\right)^{q/t}w(\xi)dv(\xi)\\
\nonumber&\geq\int_{B_1(z)}\left(\int_{B_1(z)}\frac{e^{-\frac{\beta}{2}|\xi-u|^2-\frac{\alpha t}{2}|z-u|^2}}{w(B_1(z))^{t/p}}
    d\mu(u)\right)^{q/t}w(\xi)dv(\xi)\\
&\asymp\frac{\mu(B_1(z))^{\frac{q}{t}}}{w(B_1(z))^{\frac{q}{p}-1}}.
\end{align}
Therefore, $G_{\mu}$ is bounded on $\C^n$, and $\sup_{z\in\C^n}G_{\mu}(z)\lesssim\left\|S^{t,\alpha,\beta}_{\mu}\right\|_{F^p_{\alpha,w}\to L^q(wdv)}$.

Suppose next that $G_{\mu}$ is bounded on $\C^n$. Note that $q/p\geq1$. For any $f\in F^p_{\alpha,w}$, we can apply Proposition \ref{suff} and Lemma \ref{dis1} to obtain that
\begin{align*}
&\int_{\C^n}S^{t,\alpha,\beta}_{\mu}f(z)^qw(z)dv(z)\\
&\ \lesssim\sum_{\nu\in\Z^{2n}}\frac{\mu(Q_1(\nu))^{\frac{q}{t}}}{w(Q_1(\nu))^{\frac{q}{p}-1}}
    \left(\int_{B_{2n}(\nu)}|f(u)|^pe^{-\frac{p\alpha}{2}|u|^2}w(u)dv(u)\right)^{q/p}\\
&\ \leq\sup_{\nu\in\Z^{2n}}\frac{\mu(Q_1(\nu))^{\frac{q}{t}}}{w(Q_1(\nu))^{\frac{q}{p}-1}}\cdot
    \sum_{\nu\in\Z^{2n}}\left(\int_{B_{2n}(\nu)}|f(u)|^pe^{-\frac{p\alpha}{2}|u|^2}w(u)dv(u)\right)^{q/p}\\
&\ \lesssim\sup_{z\in\C^n}G_{\mu}(z)^q\cdot\left(\sum_{\nu\in\Z^{2n}}\int_{B_{2n}(\nu)}|f(u)|^pe^{-\frac{p\alpha}{2}|u|^2}w(u)dv(u)\right)^{q/p}\\
&\ \lesssim\sup_{z\in\C^n}G_{\mu}(z)^q\cdot\|f\|^q_{F^p_{\alpha,w}},
\end{align*}
which implies that $S^{t,\alpha,\beta}_{\mu}:F^p_{\alpha,w}\to L^q(wdv)$ is bounded, and $\left\|S^{t,\alpha,\beta}_{\mu}\right\|_{F^p_{\alpha,w}\to L^q(wdv)}\lesssim\sup_{z\in\C^n}G_{\mu}(z)$.
\end{proof}

The following theorem concerns the boundedness of $S^{t,\alpha,\beta}_{\mu}:F^p_{\alpha,w}\to L^q(wdv)$ in the case $p>q$.

\begin{theorem}\label{bdd>}
Let $0<q<p<\infty$, $0<t,\alpha,\beta<\infty$, $w\in A^{\res}_{\infty}$, and let $\mu$ be a positive Borel measure on $\C^n$. Then $S^{t,\alpha,\beta}_{\mu}:F^p_{\alpha,w}\to L^q(wdv)$ is bounded if and only if $G_{\mu}\in L^{\frac{pq}{p-q}}(dv)$. Moreover,
$$\left\|S^{t,\alpha,\beta}_{\mu}\right\|_{F^p_{\alpha,w}\to L^q(wdv)}\asymp\|G_{\mu}\|_{L^{\frac{pq}{p-q}}(dv)}.$$
\end{theorem}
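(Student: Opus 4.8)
The plan is to establish the two-sided estimate $\|S\|\asymp\|G_\mu\|_{L^s(dv)}$, where $\|S\|$ abbreviates $\|S^{t,\alpha,\beta}_\mu\|_{F^p_{\alpha,w}\to L^q(wdv)}$ and $s:=\frac{pq}{p-q}$ is the exponent in the statement (so that $\frac1q=\frac1p+\frac1s$); boundedness together with $G_\mu\in L^s(dv)$ and the norm comparison all follow from these two inequalities.

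For the sufficiency I would start from the discretization of Proposition \ref{suff}, which bounds $\int_{\C^n}S^{t,\alpha,\beta}_\mu f(z)^qw(z)dv(z)$ by $\sum_\nu a_\nu b_\nu$, where $a_\nu=\mu(Q_1(\nu))^{q/t}w(Q_1(\nu))^{1-q/p}$ and $b_\nu$ is the $(q/p)$-th power of the local integral $\int_{B_{2n}(\nu)}|f(u)|^pe^{-p\alpha|u|^2/2}w(u)dv(u)$ appearing there. Since $p>q$, the exponent $p/q$ exceeds $1$, so I would apply Hölder's inequality with exponents $p/q$ and its conjugate $(p/q)'=\frac{p}{p-q}$. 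The factor coming from $b$ is $\big(\sum_\nu\int_{B_{2n}(\nu)}|f|^pe^{-p\alpha|u|^2/2}w\,dv\big)^{q/p}$, which is $\lesssim\|f\|^q_{F^p_{\alpha,w}}$ because the balls $B_{2n}(\nu)$ have bounded overlap. The factor coming from $a$ is $\big(\sum_\nu a_\nu^{p/(p-q)}\big)^{(p-q)/p}$; a direct computation gives $a_\nu^{p/(p-q)}=\mu(Q_1(\nu))^{s/t}w(Q_1(\nu))$, whose sum is comparable to $\|G_\mu\|_{L^s(dv)}^s$ by Lemma \ref{dis2} (applied with $\gamma=s/t$ and $\eta=-1$). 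Raising to the power $(p-q)/p$ converts this into $\|G_\mu\|_{L^s(dv)}^q$, and taking $q$-th roots yields $\|S\|\lesssim\|G_\mu\|_{L^s(dv)}$.

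For the necessity I would test against superpositions of normalized reproducing kernels. Fix a nonnegative finitely supported sequence $\{d_\nu\}$ and, for a choice of signs $\epsilon=\{\epsilon_\nu\}$, set $f_\epsilon=\sum_\nu\epsilon_\nu d_\nu K^\alpha_\nu/\|K^\alpha_\nu\|_{F^p_{\alpha,w}}$, so that $\|f_\epsilon\|_{F^p_{\alpha,w}}\lesssim\|d\|_{l^p}$ uniformly in $\epsilon$ by Lemma \ref{test}(2). Restricting $z$ to $Q_1(\nu)$ and the inner $\mu$-integral to $B_1(\nu)$ (where $e^{-\beta|z-u|^2/2}$ is bounded below), I would bound $\int_{\C^n}S^{t,\alpha,\beta}_\mu f_\epsilon^qw\,dv$ from below by $\sum_\nu w(Q_1(\nu))\big(\int_{B_1(\nu)}|f_\epsilon|^te^{-\alpha t|u|^2/2}d\mu\big)^{q/t}$. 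Averaging over the signs and applying Khinchine's inequality to the inner integral, together with the pointwise estimate $|K^\alpha_\nu(u)|^te^{-\alpha t|u|^2/2}/\|K^\alpha_\nu\|^t\asymp w(B_1(\nu))^{-t/p}$ for $u\in B_1(\nu)$ (from Lemma \ref{test}(1) and Remark \ref{remark}), produces, after selecting a favourable sign pattern, the bound $\sum_\nu d_\nu^q\,\mu(Q_1(\nu))^{q/t}w(Q_1(\nu))^{1-q/p}\lesssim\|S\|^q\|d\|_{l^p}^q$. Since this holds for every nonnegative $\{d_\nu\}\in l^p$, duality between $l^{p/q}$ and $l^{(p/q)'}$ gives $\sum_\nu\big(\mu(Q_1(\nu))^{q/t}w(Q_1(\nu))^{1-q/p}\big)^{p/(p-q)}\lesssim\|S\|^{pq/(p-q)}$, and Lemma \ref{dis2} identifies the left-hand side with $\|G_\mu\|_{L^s(dv)}^s$, whence $\|G_\mu\|_{L^s(dv)}\lesssim\|S\|$ and in particular $G_\mu\in L^s(dv)$.

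The main obstacle is the cancellation among the kernel atoms in the necessity argument: a single deterministic superposition need not be bounded below near each lattice point, which is precisely why the randomization is needed. The delicate sub-case is $q<t$, where the outer exponent $q/t$ is less than $1$, so one cannot pass the sign-average through the $(q/t)$-power by Jensen's inequality; here I would instead invoke the Kahane--Khinchine moment comparison, which makes the $L^q$ and $L^t$ averages over the signs of the random quasi-norm $\big(\int_{B_1(\nu)}|f_\epsilon|^te^{-\alpha t|u|^2/2}d\mu\big)^{1/t}$ comparable, reducing matters to the inner Khinchine estimate, whereas for $q\ge t$ plain convexity already suffices.
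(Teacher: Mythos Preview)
Your proposal is correct and follows essentially the same approach as the paper: sufficiency via Proposition~\ref{suff}, H\"older's inequality with exponents $p/q$ and $p/(p-q)$, and Lemma~\ref{dis2}; necessity via Rademacher-randomized atomic test functions, the Kahane--Khinchine inequality to handle the outer exponent $q/t$, and $l^{p/q}$--$l^{p/(p-q)}$ duality. The only differences are cosmetic: the paper restricts the inner integral to the disjoint cubes $Q_1(\nu)$ rather than to $B_1(\nu)$ (so your stated conclusion $\mu(Q_1(\nu))$ should really read $\mu(B_1(\nu))$, which still yields $\|G_\mu\|_{L^s}$ by the obvious ball variant of Lemma~\ref{dis2}), applies Kahane--Khinchine \emph{before} restricting rather than after, and does not need to ``select a favourable sign pattern'' since the uniform bound $\|f_\epsilon\|_{F^p_{\alpha,w}}\lesssim\|d\|_{l^p}$ already controls the sign-average directly.
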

\begin{proof}
Suppose first that $S^{t,\alpha,\beta}_{\mu}:F^p_{\alpha,w}\to L^q(wdv)$ is bounded. For any $c=\{c_{\nu}\}_{\nu\in\Z^{2n}}\in l^p(\Z^{2n})$, define
$$F_{\tau}(z):=\sum_{\nu\in\Z^{2n}}c_{\nu}r_{\nu}(\tau)f_{\nu},$$
where $\{r_{\nu}\}_{\nu\in \Z^{2n}}$ is a sequence of Rademacher functions on $[0,1]$ (see \cite[Appendix A]{Du}), and
$$f_{\nu}=\frac{K^{\alpha}_{\nu}}{\|K^{\alpha}_{\nu}\|_{F^p_{\alpha,w}}}.$$
Then by Lemma \ref{test}, for almost every $\tau\in[0,1]$, $F_{\tau}\in F^p_{\alpha,w}$, and $\|F_{\tau}\|_{F^p_{\alpha,w}}\lesssim\|c\|_{l^p(\Z^{2n})}$. Consequently, the boundedness of $S^{t,\alpha,\beta}_{\mu}:F^p_{\alpha,w}\to L^q(wdv)$ yields that
\begin{equation}\label{int}
\int_0^1\left\|S^{t,\alpha,\beta}_{\mu}F_{\tau}\right\|^q_{L^q(wdv)}d\tau
\lesssim\|S^{t,\alpha,\beta}_{\mu}\|_{F^p_{\alpha,w}\to L^q(wdv)}^q\cdot\|c\|^q_{l^p(\Z^{2n})}.
\end{equation}
Using Fubini's theorem and Kahane--Khinchine's inequalities (see \cite[Theorem 2.1]{Ka} and \cite[Appendix A]{Du}), we establish that
\begin{align*}
&\int_0^1\left\|S^{t,\alpha,\beta}_{\mu}F_{\tau}\right\|^q_{L^q(wdv)}d\tau\\
&\ =\int_{\C^n}\int_0^1\left(\int_{\C^n}e^{-\frac{\beta}{2}|z-\xi|^2}
    \left|\sum_{\nu\in\Z^{2n}}c_{\nu}r_{\nu}(\tau)f_{\nu}(\xi)\right|^t
    e^{-\frac{\alpha t}{2}|\xi|^2}d\mu(\xi)\right)^{q/t}d\tau w(z)dv(z)\\
&\ \asymp\int_{\C^n}\left(\int_0^1\int_{\C^n}e^{-\frac{\beta}{2}|z-\xi|^2}
    \left|\sum_{\nu\in\Z^{2n}}c_{\nu}r_{\nu}(\tau)f_{\nu}(\xi)\right|^t
    e^{-\frac{\alpha t}{2}|\xi|^2}d\mu(\xi)d\tau\right)^{q/t} w(z)dv(z)\\
&\ \asymp\int_{\C^n}\left(\int_{\C^n}e^{-\frac{\beta}{2}|z-\xi|^2}
    \left(\sum_{\nu\in\Z^{2n}}|c_{\nu}|^2|f_{\nu}(\xi)|^2\right)^{t/2}
    e^{-\frac{\alpha t}{2}|\xi|^2}d\mu(\xi)\right)^{q/t} w(z)dv(z)\\
&\ \geq\int_{\C^n}\left(\int_{\C^n}e^{-\frac{\beta}{2}|z-\xi|^2}
    \left(\sum_{\nu\in\Z^{2n}}|c_{\nu}|^2|f_{\nu}(\xi)|^2\chi_{Q_1(\nu)}(\xi)\right)^{t/2}
    e^{-\frac{\alpha t}{2}|\xi|^2}d\mu(\xi)\right)^{q/t} w(z)dv(z)\\
&\ =\int_{\C^n}\left(\sum_{\nu\in\Z^{2n}}|c_{\nu}|^t\int_{Q_1(\nu)}e^{-\frac{\beta}{2}|z-\xi|^2}
    |f_{\nu}(\xi)|^te^{-\frac{\alpha t}{2}|\xi|^2}d\mu(\xi)\right)^{q/t} w(z)dv(z).
\end{align*}
Note that by Lemma \ref{test} and Remark \ref{remark}, for $\xi\in Q_1(\nu)$,
$$|f_{\nu}(\xi)|^te^{-\frac{\alpha t}{2}|\xi|^2}\asymp\frac{e^{-\frac{\alpha t}{2}|\nu-\xi|^2}}{w(B_1(\nu))^{t/p}}\asymp
\frac{1}{w(Q_1(\nu))^{t/p}}.$$
Therefore, we obtain that
\begin{align*}
&\int_0^1\left\|S^{t,\alpha,\beta}_{\mu}F_{\tau}\right\|^q_{L^q(wdv)}d\tau\\
&\ \gtrsim\int_{\C^n}\left(\sum_{\nu\in\Z^{2n}}\frac{|c_{\nu}|^t}{w(Q_1(\nu))^{t/p}}\int_{Q_1(\nu)}e^{-\frac{\beta}{2}|z-\xi|^2}
    d\mu(\xi)\right)^{q/t} w(z)dv(z)\\
&\ \geq\int_{\C^n}\left(\sum_{\nu\in\Z^{2n}}\frac{|c_{\nu}|^t\chi_{Q_1(\nu)}(z)}{w(Q_1(\nu))^{t/p}}\int_{Q_1(\nu)}e^{-\frac{\beta}{2}|z-\xi|^2}
    d\mu(\xi)\right)^{q/t} w(z)dv(z)\\
&\ =\sum_{\nu\in\Z^{2n}}\frac{|c_{\nu}|^q}{w(Q_1(\nu))^{q/p}}
    \int_{Q_1(\nu)}\left(\int_{Q_1(\nu)}e^{-\frac{\beta}{2}|z-\xi|^2}d\mu(\xi)\right)^{q/t}w(z)dv(z)\\
&\ \asymp\sum_{\nu\in\Z^{2n}}|c_{\nu}|^q\cdot\frac{\mu(Q_1(\nu))^{\frac{q}{t}}}{w(Q_1(\nu))^{\frac{q}{p}-1}},
\end{align*}
which, in conjunction with \eqref{int}, implies that
$$\sum_{\nu\in\Z^{2n}}|c_{\nu}|^q\cdot\frac{\mu(Q_1(\nu))^{\frac{q}{t}}}{w(Q_1(\nu))^{\frac{q}{p}-1}}\lesssim
\|S^{t,\alpha,\beta}_{\mu}\|_{F^p_{\alpha,w}\to L^q(wdv)}^q\cdot\|c\|^q_{l^p(\Z^{2n})}.$$
Since $c=\{c_{\nu}\}_{\nu\in\Z^{2n}}\in l^p(\Z^{2n})$ is arbitrary, we may apply the duality to obtain that
$$\left(\sum_{\nu\in\Z^{2n}}\left(\frac{\mu(Q_1(\nu))^{\frac{q}{t}}}{w(Q_1(\nu))^{\frac{q}{p}-1}}\right)^{\frac{p}{p-q}}\right)^{\frac{p-q}{p}}\lesssim\|S^{t,\alpha,\beta}_{\mu}\|_{F^p_{\alpha,w}\to L^q(wdv)}^q.$$
Combining the above estimate with Lemma \ref{dis2}, we conclude that $G_{\mu}\in L^{\frac{pq}{p-q}}(dv)$, and $\|G_{\mu}\|_{L^{\frac{pq}{p-q}}(dv)}\lesssim\|S^{t,\alpha,\beta}_{\mu}\|_{F^p_{\alpha,w}\to L^q(wdv)}$.

Conversely, suppose that $G_{\mu}\in L^{\frac{pq}{p-q}}(dv)$. Then for any $f\in F^p_{\alpha,w}$, Proposition \ref{suff} together with H\"{o}lder's inequality and Lemma \ref{dis2} yields that
\begin{align*}
&\int_{\C^n}S^{t,\alpha,\beta}_{\mu}f(z)^qw(z)dv(z)\\
&\ \lesssim\sum_{\nu\in\Z^{2n}}\frac{\mu(Q_1(\nu))^{\frac{q}{t}}}{w(Q_1(\nu))^{\frac{q}{p}-1}}
\left(\int_{B_{2n}(\nu)}|f(u)|^pe^{-\frac{p\alpha}{2}|u|^2}w(u)dv(u)\right)^{\frac{q}{p}}\\
&\ \leq\left(\sum_{\nu\in\Z^{2n}}\left(\frac{\mu(Q_1(\nu))^{\frac{1}{t}}}
    {w(Q_1(\nu))^{\frac{1}{p}-\frac{1}{q}}}\right)^{\frac{pq}{p-q}}\right)^{\frac{p-q}{p}}\cdot
    \left(\sum_{\nu\in\Z^{2n}}\int_{B_{2n}(\nu)}|f(u)|^pe^{-\frac{p\alpha}{2}|u|^2}w(u)dv(u)\right)^{\frac{q}{p}}\\
&\ \lesssim\|G_{\mu}\|^q_{L^{\frac{pq}{p-q}}(dv)}\cdot\|f\|^q_{F^p_{\alpha,w}},
\end{align*}
which finishes the proof.
\end{proof}

\subsection{Compactness}

To characterize the compactness of $S^{t,\alpha,\beta}_{\mu}:F^p_{\alpha,w}\to L^q(wdv)$, we need an equivalent description, which is based on the following lemmas.

\begin{lemma}\label{uni0}
Let $0<p,q,t,\alpha,\beta<\infty$, $w\in A^{\res}_{\infty}$, and let $\mu$ be a positive Borel measure on $\C^n$. Suppose that  $S^{t,\alpha,\beta}_{\mu}:F^p_{\alpha,w}\to L^q(wdv)$ is bounded. Then for any bounded sequence $\{f_k\}\subset F^p_{\alpha,w}$ that converges to $0$ uniformly on compact subsets of $\C^n$, we have $S^{t,\alpha,\beta}_{\mu}f_k\to0$ uniformly on compact subsets of $\C^n$.
\end{lemma}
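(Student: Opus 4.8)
The plan is to show directly that $S^{t,\alpha,\beta}_{\mu}f_k\to0$ uniformly on every compact set $K\subset\C^n$, by splitting the defining integral into a ``near'' contribution over a large ball $B_R(0)$ and a ``far'' contribution over its complement, and showing that the far part is small uniformly in $k$ once $R$ is large, while the near part vanishes as $k\to\infty$ for each fixed $R$. The single ingredient extracted from boundedness is a discretized size bound on $\mu$. Indeed, by Theorems \ref{bdd<} and \ref{bdd>} (in the case $p>q$ one uses Lemmas \ref{dis1} and \ref{dis2}, together with the fact that an $\ell^{s}$-summable sequence is bounded), boundedness of $S^{t,\alpha,\beta}_{\mu}$ forces $M:=\sup_{\nu\in\Z^{2n}}\frac{\mu(Q_1(\nu))^{1/t}}{w(Q_1(\nu))^{1/p-1/q}}<\infty$, equivalently $\frac{\mu(Q_1(\nu))}{w(Q_1(\nu))^{t/p}}\le M^{t}\,w(Q_1(\nu))^{-t/q}$ for all $\nu$; since $w(B_1(z))$ is positive and locally bounded, this also makes $\mu$ finite on compact sets. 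Set $C:=\sup_k\|f_k\|_{F^p_{\alpha,w}}<\infty$.

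Fix $K=\overline{B_\rho(0)}$. For $z\in K$ and $R>\rho$ I would write $S^{t,\alpha,\beta}_{\mu}f_k(z)^{t}=I^{\mathrm{near}}_k(z)+I^{\mathrm{far}}_k(z)$, where $I^{\mathrm{near}}_k$ integrates over $B_R(0)$ and $I^{\mathrm{far}}_k$ over $\C^n\setminus B_R(0)$. For the near part, bounding $e^{-\frac{\beta}{2}|z-u|^2}\le1$ and $e^{-\frac{\alpha t}{2}|u|^2}\le1$ gives $I^{\mathrm{near}}_k(z)\le\mu(B_R(0))\cdot\sup_{u\in\overline{B_R(0)}}|f_k(u)|^{t}$, which tends to $0$ as $k\to\infty$ for each fixed $R$, uniformly in $z\in K$, since $f_k\to0$ uniformly on $\overline{B_R(0)}$ and $\mu(B_R(0))<\infty$.

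The far part is the main obstacle, because there we must control the integral uniformly in $k$. Here I would apply the pointwise estimate of Lemma \ref{pointwise} to get $|f_k(u)|^{t}e^{-\frac{\alpha t}{2}|u|^2}\lesssim C^{t}/w(B_1(u))^{t/p}$, a bound independent of $z$ and $k$. Discretizing exactly as in Proposition \ref{suff} (for $u\in Q_1(\nu)$ use $w(B_1(u))\asymp w(Q_1(\nu))$ from Remark \ref{remark} and $e^{-\frac{\beta}{2}|z-u|^2}\lesssim e^{-\frac{\beta}{4}|z-\nu|^2}$) yields, for $R$ large, $I^{\mathrm{far}}_k(z)\lesssim C^{t}\sum_{|\nu|\gtrsim R}e^{-\frac{\beta}{4}|z-\nu|^2}\frac{\mu(Q_1(\nu))}{w(Q_1(\nu))^{t/p}}\le C^{t}M^{t}\sum_{|\nu|\gtrsim R}e^{-\frac{\beta}{4}|z-\nu|^2}w(Q_1(\nu))^{-t/q}$. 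Finally Lemma \ref{esti}(2) gives $w(Q_1(\nu))^{-t/q}\le C_2^{\,t|\nu|/q}w(Q_1(0))^{-t/q}$, and since for $z\in K$ one has $|z-\nu|\ge|\nu|-\rho$, the Gaussian factor decays like $e^{-c|\nu|^2}$ and dominates the at-most-exponential factor $C_2^{\,t|\nu|/q}$. Thus the tail sum is finite and tends to $0$ as $R\to\infty$, uniformly in $z\in K$ and in $k$.

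Combining the two parts finishes the proof by the usual argument: given $\varepsilon>0$, first pick $R$ so that $\sup_{z\in K}I^{\mathrm{far}}_k(z)<\varepsilon$ for all $k$, then pick $k_0$ so that $\sup_{z\in K}I^{\mathrm{near}}_k(z)<\varepsilon$ for $k\ge k_0$; hence $\sup_{z\in K}S^{t,\alpha,\beta}_{\mu}f_k(z)^{t}\to0$, and taking $t$-th roots gives the claim. The crux is the far-part estimate, where the uniformity in $k$ rests entirely on the pointwise bound of Lemma \ref{pointwise} and the boundedness-derived inequality $M<\infty$, and where one must verify that Gaussian decay beats the at-most-exponential variation of $w(Q_1(\nu))$ permitted by the $A^{\res}_{\infty}$ doubling in Lemma \ref{esti}.
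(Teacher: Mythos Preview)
Your proof is correct. The structure differs from the paper's in two respects, and both differences make your argument shorter.

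First, the paper discretizes the entire integral at the outset to obtain a single sum over $\nu\in\Z^{2n}$ (the estimate \eqref{upp}), and only then splits that sum into the terms with $|\nu|<R_1$ and $|\nu|\ge R_1$. You instead split the \emph{integral} into $B_R(0)$ and its complement, dispatch the near part in one line using $\mu(B_R(0))<\infty$ and uniform convergence of $f_k$, and discretize only the far part. Your near-part argument is more direct; the paper's version still has to control $w(B_{2n}(\nu))$ for $|\nu|<R_1$ via Lemma~\ref{esti}.

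Second, and more significantly, the paper splits into the cases $p\le q$ and $p>q$, invoking in the latter the full $\ell^{\frac{p}{p-q}}$-summability of $\bigl(\mu(Q_1(\nu))^{q/t}/w(Q_1(\nu))^{q/p-1}\bigr)_\nu$ together with H\"older's inequality. You observe that for the present purpose only the $\ell^\infty$ bound $M<\infty$ is needed, which follows in the case $p>q$ from the trivial inclusion $\ell^{s}\subset\ell^{\infty}$. This collapses the two cases into one and bypasses the H\"older step entirely. The paper's case-by-case treatment gains nothing here; your unification is the cleaner route. The common core of both arguments is the same: Lemma~\ref{pointwise} for the pointwise control of $|f_k|^t e^{-\alpha t|\cdot|^2/2}$, and Lemma~\ref{esti}(2) to ensure the Gaussian tail $e^{-c|\nu|^2}$ defeats the at-most-exponential growth of $w(Q_1(\nu))^{-t/q}$.
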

\begin{proof}
Fix $R_0>0$. We will finish the proof by showing that $S^{t,\alpha,\beta}_{\mu}f_k\to0$ uniformly on $B_{R_0}(0)$. Note that for $z\in B_{R_0}(0)$ and $\xi\in Q_1(\nu)$,
$$|z-\xi|^2\geq\frac{1}{2}|\nu|^2-(R_0+1)^2.$$
We can deduce from Lemma \ref{pointwise} and Remark \ref{remark} that for every $k\geq1$ and $z\in B_{R_0}(0)$,
\begin{align}\label{upp}
\nonumber&S^{t,\alpha,\beta}_{\mu}f_k(z)^t\\
\nonumber&\ =\int_{\C^n}e^{-\frac{\beta}{2}|z-\xi|^2}|f_k(\xi)|^te^{-\frac{\alpha t}{2}|\xi|^2}d\mu(\xi)\\
\nonumber&\ \lesssim\sum_{\nu\in\Z^{2n}}\int_{Q_1(\nu)}e^{-\frac{\beta}{2}|z-\xi|^2}
    \left(\frac{1}{w(B_1(\xi))}\int_{B_1(\xi)}|f_k(u)|^pe^{-\frac{p\alpha}{2}|u|^2}w(u)dv(u)\right)^{t/p}d\mu(\xi)\\
&\ \lesssim\sum_{\nu\in\Z^{2n}}e^{-\frac{\beta}{4}|\nu|^2}\frac{\mu(Q_1(\nu))}{w(Q_1(\nu))^{t/p}}
    \left(\int_{B_{2n}(\nu)}|f_k(u)|^pe^{-\frac{p\alpha}{2}|u|^2}w(u)dv(u)\right)^{t/p}.
\end{align}
We now separate into two cases: $p\leq q$ and $p>q$.

{\bf Case 1: $p\leq q$.} By Theorem \ref{bdd<} and Lemma \ref{dis1}, we have that for any $\nu\in\Z^{2n}$,
$$\frac{\mu(Q_1(\nu))}{w(Q_1(\nu))^{t/p}}\lesssim w(Q_1(\nu))^{-t/q}.$$
On the other hand, Lemma \ref{esti} yields that there exists $C>0$ such that for any $\nu\in\Z^{2n}$,
\begin{equation}\label{nu0}
C^{-|\nu|}w(Q_1(0))\leq w(Q_1(\nu))\leq C^{|\nu|}w(Q_1(0)).
\end{equation}
Consequently, \eqref{upp} implies that
\begin{align*}
S^{t,\alpha,\beta}_{\mu}f_k(z)^t
&\lesssim\sum_{\nu\in\Z^{2n}}e^{-\frac{\beta}{4}|\nu|^2}w(Q_1(\nu))^{-t/q}
    \left(\int_{B_{2n}(\nu)}|f_k(u)|^pe^{-\frac{p\alpha}{2}|u|^2}w(u)dv(u)\right)^{t/p}\\
&\lesssim\sum_{\nu\in\Z^{2n}}e^{-\frac{\beta}{4}|\nu|^2}C^{\frac{t}{q}|\nu|}
    \left(\int_{B_{2n}(\nu)}|f_k(u)|^pe^{-\frac{p\alpha}{2}|u|^2}w(u)dv(u)\right)^{t/p}.
\end{align*}
Fix $\epsilon>0$. Then we can choose $R_1>0$ such that
$$\sum_{\nu\in\Z^{2n}\setminus B_{R_1}(0)}e^{-\frac{\beta}{4}|\nu|^2}C^{\frac{t}{q}|\nu|}<\epsilon^t.$$
Since $f_k\to0$ uniformly on compact subsets of $\C^n$, there exists $K_1>0$ such that for any $u\in \bigcup_{\nu\in\Z^{2n}\cap B_{R_1}(0)}B_{2n}(\nu)$,
$$|f_k(u)|<\epsilon$$
whenever $k>K_1$. Therefore, for any $k>K_1$ and $z\in B_{R_0}(0)$, Remark \ref{remark} together with the estimate \eqref{nu0} yields that
\begin{align*}
S^{t,\alpha,\beta}_{\mu}f_k(z)^t
&\lesssim\epsilon^t\sum_{\nu\in\Z^{2n}\cap B_{R_1}(0)}e^{-\frac{\beta}{4}|\nu|^2}C^{\frac{t}{q}|\nu|}w(B_{2n}(\nu))^{t/p}\\
&\qquad    +\|f_k\|^t_{F^p_{\alpha,w}}\sum_{\nu\in\Z^{2n}\setminus B_{R_1}(0)}e^{-\frac{\beta}{4}|\nu|^2}C^{\frac{t}{q}|\nu|}\\
&\lesssim\epsilon^t\sum_{\nu\in\Z^{2n}\cap B_{R_1}(0)}e^{-\frac{\beta}{4}|\nu|^2}C^{\frac{t}{q}|\nu|}w(Q_1(\nu))^{t/p}+\epsilon^t\\
&\lesssim\epsilon^t\sum_{\nu\in\Z^{2n}\cap B_{R_1}(0)}e^{-\frac{\beta}{4}|\nu|^2}C^{\frac{t}{q}|\nu|+\frac{t}{p}|\nu|}+\epsilon^t\\
&\lesssim\epsilon^t.
\end{align*}
Hence we conclude that $S^{t,\alpha,\beta}_{\mu}f_k\to0$ uniformly on $B_{R_0}(0)$.

{\bf Case 2: $p>q$.} By the proof of Theorem \ref{bdd>},
$$\sum_{\nu\in\Z^{2n}}\left(\frac{\mu(Q_1(\nu))^{\frac{q}{t}}}{w(Q_1(\nu))^{\frac{q}{p}-1}}\right)^{\frac{p}{p-q}}<\infty.$$
Consequently, if $q\leq t$, then by \eqref{upp}, H\"{o}lder's inequality and \eqref{nu0}, we obtain that
\begin{align*}
&S^{t,\alpha,\beta}_{\mu}f_k(z)^q\\
&\ \lesssim\left(\sum_{\nu\in\Z^{2n}}e^{-\frac{\beta}{4}|\nu|^2}\frac{\mu(Q_1(\nu))}{w(Q_1(\nu))^{t/p}}
    \left(\int_{B_{2n}(\nu)}|f_k(u)|^pe^{-\frac{p\alpha}{2}|u|^2}w(u)dv(u)\right)^{t/p}\right)^{q/t}\\
&\ \leq\sum_{\nu\in\Z^{2n}}e^{-\frac{q\beta}{4t}|\nu|^2}\frac{\mu(Q_1(\nu))^{q/t}}{w(Q_1(\nu))^{q/p}}
    \left(\int_{B_{2n}(\nu)}|f_k(u)|^pe^{-\frac{p\alpha}{2}|u|^2}w(u)dv(u)\right)^{q/p}\\
&\ \lesssim\left(\sum_{\nu\in\Z^{2n}}e^{-\frac{p\beta}{4t}|\nu|^2}w(Q_1(\nu))^{-\frac{p}{q}}
    \int_{B_{2n}(\nu)}|f_k(u)|^pe^{-\frac{p\alpha}{2}|u|^2}w(u)dv(u)\right)^{q/p}\\
&\ \lesssim\left(\sum_{\nu\in\Z^{2n}}e^{-\frac{p\beta}{4t}|\nu|^2}C^{\frac{p}{q}|\nu|}
    \int_{B_{2n}(\nu)}|f_k(u)|^pe^{-\frac{p\alpha}{2}|u|^2}w(u)dv(u)\right)^{q/p}.
\end{align*}
If $q>t$, then similarly,
\begin{align*}
&S^{t,\alpha,\beta}_{\mu}f_k(z)^q\\
&\ \lesssim\left(\sum_{\nu\in\Z^{2n}}e^{-\frac{\beta}{4}|\nu|^2}\frac{\mu(Q_1(\nu))}{w(Q_1(\nu))^{t/p}}
    \left(\int_{B_{2n}(\nu)}|f_k(u)|^pe^{-\frac{p\alpha}{2}|u|^2}w(u)dv(u)\right)^{t/p}\right)^{q/t}\\
&\ \lesssim\sum_{\nu\in\Z^{2n}}e^{-\frac{\beta}{4}|\nu|^2}\frac{\mu(Q_1(\nu))^{q/t}}{w(Q_1(\nu))^{q/p}}
    \left(\int_{B_{2n}(\nu)}|f_k(u)|^pe^{-\frac{p\alpha}{2}|u|^2}w(u)dv(u)\right)^{q/p}\\
&\ \lesssim\left(\sum_{\nu\in\Z^{2n}}e^{-\frac{p\beta}{4q}|\nu|^2}C^{\frac{p}{q}|\nu|}
    \int_{B_{2n}(\nu)}|f_k(u)|^pe^{-\frac{p\alpha}{2}|u|^2}w(u)dv(u)\right)^{q/p}.
\end{align*}
Therefore, using the same method as in Case 1, we can deduce that for any $z\in B_{R_0}(0)$, $S^{t,\alpha,\beta}_{\mu}f_k(z)\lesssim\epsilon$ whenever $k$ is large enough, which finishes the proof.
\end{proof}

\begin{lemma}\label{Duren}
Let $w$ be a weight on $\C^n$ and $0<p<\infty$. If $\{\varphi_k\}\subset L^p(wdv)$ and $\varphi\in L^p(wdv)$ satisfy $\|\varphi_k\|_{L^p(wdv)}\to\|\varphi\|_{L^p(wdv)}$ and $\varphi_k(z)\to\varphi(z)$ for almost every $z\in\C^n$, then $\varphi_k\to\varphi$ in $L^p(wdv)$.
\end{lemma}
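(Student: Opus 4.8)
The plan is to run the standard Riesz / Brezis--Lieb argument built on Fatou's lemma. Since convergence in $L^p(wdv)$ amounts to $\|\varphi_k-\varphi\|^p_{L^p(wdv)}=\int_{\C^n}|\varphi_k-\varphi|^pw\,dv\to0$, it suffices to establish this limit, and the key is to manufacture a nonnegative majorant whose integral can be controlled by the hypotheses.

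First I would introduce the auxiliary functions
$$g_k:=2^p\bigl(|\varphi_k|^p+|\varphi|^p\bigr)-|\varphi_k-\varphi|^p,\qquad k\geq1.$$
The elementary inequality $|a-b|^p\leq(|a|+|b|)^p\leq 2^p\max\{|a|,|b|\}^p\leq 2^p(|a|^p+|\varphi|^p)$, which holds for every $0<p<\infty$ (not merely $p\geq1$), guarantees $g_k\geq0$. Moreover, the a.e.\ convergence $\varphi_k\to\varphi$ forces $g_k\to 2^{p+1}|\varphi|^p$ pointwise almost everywhere. Applying Fatou's lemma to the nonnegative functions $g_kw$ then yields
$$\int_{\C^n}2^{p+1}|\varphi|^pw\,dv\leq\liminf_{k\to\infty}\int_{\C^n}g_kw\,dv.$$

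Next I would expand the right-hand side using linearity,
$$\int_{\C^n}g_kw\,dv=2^p\bigl(\|\varphi_k\|^p_{L^p(wdv)}+\|\varphi\|^p_{L^p(wdv)}\bigr)-\|\varphi_k-\varphi\|^p_{L^p(wdv)},$$
and invoke the hypothesis $\|\varphi_k\|_{L^p(wdv)}\to\|\varphi\|_{L^p(wdv)}$, which shows that the bracketed term converges to $2^{p+1}\|\varphi\|^p_{L^p(wdv)}$. Since the left-hand side of the Fatou inequality equals $2^{p+1}\|\varphi\|^p_{L^p(wdv)}$ and is finite because $\varphi\in L^p(wdv)$, I may cancel this common quantity to conclude that $\limsup_{k\to\infty}\|\varphi_k-\varphi\|^p_{L^p(wdv)}\leq0$, which gives the desired convergence $\varphi_k\to\varphi$ in $L^p(wdv)$.

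There is no genuine obstacle here; the argument is purely measure-theoretic and independent of the Fock-space machinery. The only two points demanding care are the choice of the constant $2^p$ in the definition of $g_k$, so that nonnegativity of $g_k$ persists in the full range $0<p<\infty$, and the verification that the limiting integral $2^{p+1}\|\varphi\|^p_{L^p(wdv)}$ is finite, which is exactly what legitimizes the cancellation step and hence the whole argument.
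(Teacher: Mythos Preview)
Your proof is correct and is precisely the classical Riesz--Fatou argument that the paper invokes by citing \cite[p.~21, Lemma~1]{Du}; the paper gives no independent proof, so you have simply spelled out what the citation points to. (There is a harmless typo in your inequality chain, where the final $|\varphi|^p$ should read $|b|^p$.)
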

\begin{proof}
See the proof of \cite[p. 21, Lemma 1]{Du}.
\end{proof}

The following proposition establishes an equivalent description for the compactness of $S^{t,\alpha,\beta}_{\mu}:F^p_{\alpha,w}\to L^q(wdv)$.

\begin{proposition}\label{cpt-eq}
Let $0<p,q,t,\alpha,\beta<\infty$, $w\in A^{\res}_{\infty}$, and let $\mu$ be a positive Borel measure on $\C^n$. Suppose that  $S^{t,\alpha,\beta}_{\mu}:F^p_{\alpha,w}\to L^q(wdv)$ is bounded. Then the following statements are equivalent:
\begin{enumerate}
	\item [(i)] $S^{t,\alpha,\beta}_{\mu}:F^p_{\alpha,w}\to L^q(wdv)$ is compact;
	\item [(ii)] for any bounded sequence $\{f_k\}\subset F^p_{\alpha,w}$ that converges to $0$ uniformly on compact subsets of $\C^n$, we have
	$$\left\|S^{t,\alpha,\beta}_{\mu}f_k\right\|_{L^q(wdv)}\to0.$$
\end{enumerate}
\end{proposition}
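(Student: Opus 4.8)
The plan is to prove both implications, the non-trivial feature throughout being the nonlinearity of $S^{t,\alpha,\beta}_{\mu}$, which prevents us from writing $S^{t,\alpha,\beta}_{\mu}(f_k)-S^{t,\alpha,\beta}_{\mu}(f)=S^{t,\alpha,\beta}_{\mu}(f_k-f)$. The device to circumvent this is to regard, for each fixed $z$, the quantity $S^{t,\alpha,\beta}_{\mu}f(z)$ as the $L^t$-(quasi)norm of $f$ against the measure $d\sigma_z(u):=e^{-\frac{\beta}{2}|z-u|^2}e^{-\frac{\alpha t}{2}|u|^2}d\mu(u)$, so that Minkowski's inequality in $L^t(\sigma_z)$ (for $t\ge1$) or the subadditivity $(a+b)^t\le a^t+b^t$ (for $0<t\le1$) yields, pointwise in $z$,
$$\left|S^{t,\alpha,\beta}_{\mu}f(z)-S^{t,\alpha,\beta}_{\mu}h(z)\right|\le S^{t,\alpha,\beta}_{\mu}(f-h)(z),\qquad t\ge1,$$
and
$$\left|S^{t,\alpha,\beta}_{\mu}f(z)^t-S^{t,\alpha,\beta}_{\mu}h(z)^t\right|\le S^{t,\alpha,\beta}_{\mu}(f-h)(z)^t,\qquad 0<t\le1.$$

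For the implication (i)$\Rightarrow$(ii), let $\{f_k\}$ be bounded in $F^p_{\alpha,w}$ with $f_k\to0$ uniformly on compact subsets. By Lemma \ref{uni0}, $S^{t,\alpha,\beta}_{\mu}f_k\to0$ uniformly on compact subsets, hence pointwise on $\C^n$. If $\|S^{t,\alpha,\beta}_{\mu}f_k\|_{L^q(wdv)}$ did not tend to $0$, I would pass to a subsequence bounded below by some $\delta>0$; compactness of $S^{t,\alpha,\beta}_{\mu}$ applied to the bounded sequence $\{f_k\}$ then furnishes a further subsequence with $S^{t,\alpha,\beta}_{\mu}f_{k_j}\to g$ in $L^q(wdv)$ for some $g$ with $\|g\|_{L^q(wdv)}\ge\delta$. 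Extracting an a.e.-convergent subsequence from this $L^q$-convergent one and comparing with the pointwise limit forces $g=0$ a.e., a contradiction. Thus (ii) holds.

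For the implication (ii)$\Rightarrow$(i), take any bounded sequence $\{f_k\}\subset F^p_{\alpha,w}$. Lemma \ref{pointwise} shows $\{f_k\}$ is uniformly bounded on compact subsets, so by Montel's theorem a subsequence $f_{k_j}$ converges uniformly on compact subsets to an entire function $f$, and Fatou's lemma gives $f\in F^p_{\alpha,w}$. Setting $g_j:=f_{k_j}-f$, the sequence $\{g_j\}$ is bounded in $F^p_{\alpha,w}$ and tends to $0$ uniformly on compact subsets, so (ii) gives $\|S^{t,\alpha,\beta}_{\mu}g_j\|_{L^q(wdv)}\to0$, while Lemma \ref{uni0} gives $S^{t,\alpha,\beta}_{\mu}g_j\to0$ pointwise. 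Feeding these two facts into the pointwise (quasi)triangle inequalities above yields, first, $S^{t,\alpha,\beta}_{\mu}f_{k_j}\to S^{t,\alpha,\beta}_{\mu}f$ pointwise on $\C^n$, and, second, $\|S^{t,\alpha,\beta}_{\mu}f_{k_j}\|_{L^q(wdv)}\to\|S^{t,\alpha,\beta}_{\mu}f\|_{L^q(wdv)}$; the latter follows by one further application of the ordinary triangle inequality, when the relevant outer exponent ($q/t$ if $t\le1$, or $q$ if $t\ge1$) is at least $1$, or of the elementary inequality $\left|\int A^{\rho}-\int B^{\rho}\right|\le\int|A-B|^{\rho}$ (valid for $0<\rho<1$) otherwise. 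Lemma \ref{Duren}, with $\varphi_k=S^{t,\alpha,\beta}_{\mu}f_{k_j}$ and $\varphi=S^{t,\alpha,\beta}_{\mu}f$, then upgrades these two convergences to $S^{t,\alpha,\beta}_{\mu}f_{k_j}\to S^{t,\alpha,\beta}_{\mu}f$ in $L^q(wdv)$. Since every bounded sequence thus has an image subsequence converging in the complete metric space $L^q(wdv)$, the operator maps bounded sets to relatively compact sets, i.e. (i) holds.

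The main obstacle is precisely the nonlinearity. In the case $0<t\le1$ one controls only the difference of $t$-th powers $S^{t,\alpha,\beta}_{\mu}f_{k_j}(z)^t-S^{t,\alpha,\beta}_{\mu}f(z)^t$, rather than the difference $S^{t,\alpha,\beta}_{\mu}f_{k_j}(z)-S^{t,\alpha,\beta}_{\mu}f(z)$ itself, so a direct domination of the difference in $L^q(wdv)$ is unavailable and a single inequality cannot deliver $L^q$-convergence. This is exactly why Lemma \ref{Duren} is indispensable: it lets us deduce norm convergence from the two separately accessible ingredients, pointwise convergence and convergence of the norms, sidestepping any need to estimate the difference directly.
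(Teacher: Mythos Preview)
Your proof is correct and follows essentially the same route as the paper: both directions hinge on Lemma \ref{uni0} for pointwise convergence, the Minkowski/subadditivity inequalities (split according to whether $t\ge1$ or $t<1$, and then according to the outer exponent) to pass from $S^{t,\alpha,\beta}_{\mu}(f_{k_j}-f)$ to the separate convergences of $S^{t,\alpha,\beta}_{\mu}f_{k_j}$ pointwise and in norm, and finally Lemma \ref{Duren} to upgrade these to $L^q$-convergence. Your contradiction argument for (i)$\Rightarrow$(ii) is a cosmetic variant of the paper's subsequence-of-subsequence argument.
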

\begin{proof}
Suppose first that (i) holds. Let $\{f_k\}$ be a bounded sequence in $F^p_{\alpha,w}$ that converges to $0$ uniformly on compact subsets of $\C^n$. For any subsequence $\{f_{k_j}\}\subset \{f_k\}$, the compactness of $S^{t,\alpha,\beta}_{\mu}:F^p_{\alpha,w}\to L^q(wdv)$ ensures that there exist a subsequence $\{f_{k_{j_l}}\}\subset \{f_{k_j}\}$ and a function $h\in L^q(wdv)$ such that $S^{t,\alpha,\beta}_{\mu}f_{k_{j_l}}\to h$ in $L^q(wdv)$. Consequently, there exists a subsequence of $\{f_{k_{j_l}}\}$, still denoted by $\{f_{k_{j_l}}\}$, such that $S^{t,\alpha,\beta}_{\mu}f_{k_{j_l}}(z)\to h(z)$ for almost every $z\in\C^n$. On the other hand, Lemma \ref{uni0} implies that $S^{t,\alpha,\beta}_{\mu}f_k\to 0$ uniformly on compact subsets of $\C^n$. Therefore, $h=0$ and $S^{t,\alpha,\beta}_{\mu}f_{k_{j_l}}\to0$ in $L^q(wdv)$. The arbitrariness of $\{f_{k_j}\}\subset \{f_k\}$ then gives that
$$\left\|S^{t,\alpha,\beta}_{\mu}f_k\right\|_{L^q(wdv)}\to0.$$

Suppose now that (ii) holds. Let $\{g_k\}\subset F^p_{\alpha,w}$ be a bounded sequence. Then by Lemma \ref{pointwise} and Montel's theorem, there are a subsequence of $\{g_k\}$, still denoted by $\{g_k\}$, and an entire function $g$ on $\C^n$ such that $g_k\to g$ uniformly on compact subsets of $\C^n$. By Fatou's lemma, we have $g\in F^p_{\alpha,w}$. Hence $\{g_k-g\}$ is a bounded sequence in $F^p_{\alpha,w}$ that converges to $0$ uniformly on compact subsets of $\C^n$. Then by (ii),
\begin{equation}\label{norm0}
\left\|S^{t,\alpha,\beta}_{\mu}(g_k-g)\right\|_{L^q(wdv)}\to0.
\end{equation}
Moreover, Lemma \ref{uni0} yields that
\begin{equation}\label{point0}
S^{t,\alpha,\beta}_{\mu}(g_k-g)(z)\to0, \quad \forall z\in\C^n.
\end{equation}
In the case $t\geq1$, by Minkowski’s inequality,
$$S^{t,\alpha,\beta}_{\mu}(g_k-g)(z)\geq\left|S^{t,\alpha,\beta}_{\mu}g_k(z)-S^{t,\alpha,\beta}_{\mu}g(z)\right|,\quad \forall z\in\C^n.$$
If $q\geq1$, then Minkowski’s inequality again yields that
$$\left\|S^{t,\alpha,\beta}_{\mu}(g_k-g)\right\|_{L^q(wdv)}\geq
\left|\left\|S^{t,\alpha,\beta}_{\mu}g_k\right\|_{L^q(wdv)}-\left\|S^{t,\alpha,\beta}_{\mu}g\right\|_{L^q(wdv)}\right|,$$
and if $q<1$, then
$$\left\|S^{t,\alpha,\beta}_{\mu}(g_k-g)\right\|^q_{L^q(wdv)}\geq
\left|\left\|S^{t,\alpha,\beta}_{\mu}g_k\right\|^q_{L^q(wdv)}-\left\|S^{t,\alpha,\beta}_{\mu}g\right\|^q_{L^q(wdv)}\right|.$$
In the case $t<1$, we have
$$S^{t,\alpha,\beta}_{\mu}(g_k-g)(z)^t\geq\left|S^{t,\alpha,\beta}_{\mu}g_k(z)^t-S^{t,\alpha,\beta}_{\mu}g(z)^t\right|,\quad \forall z\in\C^n.$$
If $q/t\geq1$, then
$$\left\|S^{t,\alpha,\beta}_{\mu}(g_k-g)\right\|^t_{L^q(wdv)}\geq
\left|\left\|S^{t,\alpha,\beta}_{\mu}g_k\right\|^t_{L^q(wdv)}-\left\|S^{t,\alpha,\beta}_{\mu}g\right\|^t_{L^q(wdv)}\right|,$$
and if $q/t<1$, then
$$\left\|S^{t,\alpha,\beta}_{\mu}(g_k-g)\right\|^q_{L^q(wdv)}\geq
\left|\left\|S^{t,\alpha,\beta}_{\mu}g_k\right\|^q_{L^q(wdv)}-\left\|S^{t,\alpha,\beta}_{\mu}g\right\|^q_{L^q(wdv)}\right|.$$
Combining these inequalities with \eqref{norm0} and \eqref{point0}, we conclude that
$$S^{t,\alpha,\beta}_{\mu}g_k(z)\to S^{t,\alpha,\beta}_{\mu}g(z),\quad \forall z\in\C^n,$$
and
$$\left\|S^{t,\alpha,\beta}_{\mu}g_k\right\|_{L^q(wdv)}\to\left\|S^{t,\alpha,\beta}_{\mu}g\right\|_{L^q(wdv)},$$
which, together with Lemma \ref{Duren}, implies that $S^{t,\alpha,\beta}_{\mu}g_k\to S^{t,\alpha,\beta}_{\mu}g$ in $L^q(wdv)$. Therefore, $S^{t,\alpha,\beta}_{\mu}:F^p_{\alpha,w}\to L^q(wdv)$ is compact.
\end{proof}

We are now ready to characterize the compactness of $S^{t,\alpha,\beta}_{\mu}:F^p_{\alpha,w}\to L^q(wdv)$.

\begin{theorem}\label{cpt<}
Let $0<p\leq q<\infty$, $0<t,\alpha,\beta<\infty$, $w\in A^{\res}_{\infty}$, and let $\mu$ be a positive Borel measure on $\C^n$, finite on compact subsets of $\C^n$. Then $S^{t,\alpha,\beta}_{\mu}:F^p_{\alpha,w}\to L^q(wdv)$ is compact if and only if $\lim_{|z|\to\infty}G_{\mu}(z)=0$.
\end{theorem}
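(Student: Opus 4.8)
The plan is to reduce compactness to the sequential criterion in Proposition \ref{cpt-eq} and then run the usual necessity/sufficiency dichotomy, the genuinely new ingredients being the normalized reproducing-kernel test functions and the discretization of the decay condition $\lim_{|z|\to\infty}G_{\mu}(z)=0$.

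For necessity I assume $S^{t,\alpha,\beta}_{\mu}$ is compact and argue by contradiction: if $G_{\mu}(z)\not\to0$, choose $z_k$ with $|z_k|\to\infty$ and $G_{\mu}(z_k)\geq\delta>0$, and test against the functions $f_{z_k}(u)=w(B_1(z_k))^{-1/p}e^{\alpha\langle u,z_k\rangle-\frac{\alpha}{2}|z_k|^2}$ from the proof of Theorem \ref{bdd<}, which satisfy $\|f_{z_k}\|_{F^p_{\alpha,w}}\asymp1$. I claim $f_{z_k}\to0$ uniformly on compact subsets: for $|u|\leq R$ the numerator is at most $e^{\alpha R|z_k|-\frac{\alpha}{2}|z_k|^2}$, while Lemma \ref{esti}(2) bounds the denominator below by a constant times $C^{-|z_k|/p}$, so the Gaussian decay dominates. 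Proposition \ref{cpt-eq} then forces $\|S^{t,\alpha,\beta}_{\mu}f_{z_k}\|_{L^q(wdv)}\to0$, whereas the lower bound \eqref{bdd-nece} gives $\|S^{t,\alpha,\beta}_{\mu}f_{z_k}\|_{L^q(wdv)}\gtrsim G_{\mu}(z_k)\geq\delta$, a contradiction.

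For sufficiency I assume $\lim_{|z|\to\infty}G_{\mu}(z)=0$. First I note $G_{\mu}$ is bounded: it is bounded near infinity by hypothesis, and on any fixed ball $\mu(B_1(z))$ is dominated by the $\mu$-mass of a compact set (here I use that $\mu$ is finite on compacta) while $w(B_1(z))$ is bounded below by Lemma \ref{esti}(2); hence Theorem \ref{bdd<} makes $S^{t,\alpha,\beta}_{\mu}$ bounded and Proposition \ref{cpt-eq} applicable. I then take a bounded sequence $\{f_k\}\subset F^p_{\alpha,w}$ with $f_k\to0$ uniformly on compacta and bound $\int_{\C^n}S^{t,\alpha,\beta}_{\mu}f_k(z)^qw(z)dv(z)$ by Proposition \ref{suff}. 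Writing $\widetilde{G}_{\mu}(\nu):=\mu(Q_1(\nu))^{1/t}/w(Q_1(\nu))^{1/p-1/q}$, the right-hand side equals (up to a constant) $\sum_{\nu}\widetilde{G}_{\mu}(\nu)^q\,\widehat{|f_k|^p}(\nu)^{q/p}$. The estimate \eqref{in1} from the proof of Lemma \ref{dis1}, applied with $\gamma=1/t$ and $\eta=1/p-1/q$, yields $\widetilde{G}_{\mu}(\nu)\lesssim\sum_{\nu'\in I_{\nu}}G_{\mu}(\nu')$ where $I_{\nu}$ is a uniformly bounded set of lattice points contained in $Q_3(\nu)$; consequently $\widetilde{G}_{\mu}(\nu)\to0$ as $|\nu|\to\infty$.

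Given $\epsilon>0$ I pick $R$ with $\widetilde{G}_{\mu}(\nu)<\epsilon$ for $|\nu|>R$ and split the sum. Since $q/p\geq1$, the tail obeys $\sum_{|\nu|>R}\widetilde{G}_{\mu}(\nu)^q\,\widehat{|f_k|^p}(\nu)^{q/p}\leq\epsilon^q\big(\sum_{\nu}\widehat{|f_k|^p}(\nu)\big)^{q/p}\lesssim\epsilon^q\|f_k\|_{F^p_{\alpha,w}}^q$ uniformly in $k$, because the balls $B_{2n}(\nu)$ have bounded overlap. The head is a finite sum in which $\widetilde{G}_{\mu}(\nu)$ is bounded and each $\widehat{|f_k|^p}(\nu)=\int_{B_{2n}(\nu)}|f_k|^pe^{-\frac{p\alpha}{2}|u|^2}w\,dv\to0$ as $k\to\infty$, since $f_k\to0$ uniformly on the compact set $\bigcup_{|\nu|\leq R}\overline{B_{2n}(\nu)}$ and the Gaussian-weighted mass there is finite by Remark \ref{remark}(2). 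Letting $k\to\infty$ and then $\epsilon\to0$ gives $\|S^{t,\alpha,\beta}_{\mu}f_k\|_{L^q(wdv)}\to0$, so Proposition \ref{cpt-eq} delivers compactness. I expect the two transfer steps—checking $f_z\to0$ uniformly on compacta despite the weight in the denominator, and upgrading $G_{\mu}(z)\to0$ to the discrete statement $\widetilde{G}_{\mu}(\nu)\to0$—to be the only real obstacles, the head/tail splitting being routine once they are settled.
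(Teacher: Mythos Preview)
Your proposal is correct and follows essentially the same route as the paper: both directions hinge on Proposition~\ref{cpt-eq}, the normalized kernels $f_z$ together with the lower bound~\eqref{bdd-nece} for necessity, and Proposition~\ref{suff} plus the head/tail split (using~\eqref{in1} to discretize the decay of $G_{\mu}$) for sufficiency. The only cosmetic differences are that you phrase necessity by contradiction and supply the explicit justification that $f_{z_k}\to0$ uniformly on compacta, which the paper leaves implicit.
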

\begin{proof}
Suppose first that $S^{t,\alpha,\beta}_{\mu}:F^p_{\alpha,w}\to L^q(wdv)$ is compact. For any $z\in\C^n$, define
$$f_z(u):=\frac{e^{\alpha\langle u,z\rangle-\frac{\alpha}{2}|z|^2}}{w(B_1(z))^{1/p}},\quad u\in\C^n.$$
Then $\|f_z\|_{F^p_{\alpha,w}}\lesssim1$ and $f_z\to0$ uniformly on compact subsets of $\C^n$ as $|z|\to\infty$. Consequently, by the estimate \eqref{bdd-nece} and Proposition \ref{cpt-eq},
$$G_{\mu}(z)\lesssim\left\|S^{t,\alpha,\beta}_{\mu}f_z\right\|_{L^q(wdv)}\to0$$
as $|z|\to\infty$.

Suppose next that $\lim_{|z|\to\infty}G_{\mu}(z)\to0$. Then for any $\epsilon>0$, we can use the estimate \eqref{in1} to find $R>0$ such that
$$\sup_{\nu\in\Z^{2n}\setminus B_R(0)}\frac{\mu(Q_1(\nu))^{\frac{1}{t}}}{w(Q_1(\nu))^{\frac{1}{p}-\frac{1}{q}}}<\epsilon.$$
Let $\{f_k\}$ be a bounded sequence in $F^p_{\alpha,w}$ that converges to $0$ uniformly on compact subsets of $\C^n$. Then there exists $K\geq1$ such that whenever $k>K$, $|f_k(z)|<\epsilon$ for all $z\in\bigcup_{\nu\in\Z^{2n}\cap B_{R}(0)}B_{2n}(\nu)$. Therefore, noting that $G_{\mu}$ is bounded on $\C^n$ since $\mu$ is finite on compact subsets of $\C^n$, we can apply Proposition \ref{suff}, Lemma \ref{dis1} and Remark \ref{remark} to establish that for $k>K$,
\begin{align*}
\left\|S^{t,\alpha,\beta}_{\mu}f_k\right\|^q_{L^q(wdv)}
&\lesssim\sum_{\nu\in\Z^{2n}}\frac{\mu(Q_1(\nu))^{\frac{q}{t}}}{w(Q_1(\nu))^{\frac{q}{p}-1}}
\left(\int_{B_{2n}(\nu)}|f_k(u)|^pe^{-\frac{p\alpha}{2}|u|^2}w(u)dv(u)\right)^{\frac{q}{p}}\\
&\lesssim\epsilon^q\sum_{\nu\in\Z^{2n}\cap B_R(0)}
    \left(\int_{B_{2n}(\nu)}e^{-\frac{p\alpha}{2}|u|^2}w(u)dv(u)\right)^{\frac{q}{p}}\\
&\qquad+\epsilon^q\sum_{\nu\in\Z^{2n}\setminus B_{R}(0)}
    \left(\int_{B_{2n}(\nu)}|f_k(u)|^pe^{-\frac{p\alpha}{2}|u|^2}w(u)dv(u)\right)^{\frac{q}{p}}\\
&\lesssim\epsilon^q\left(\int_{\C^n}e^{-\frac{p\alpha}{2}|u|^2}w(u)dv(u)\right)^{\frac{q}{p}}+\epsilon^q\|f_k\|^q_{F^p_{\alpha,w}}\\
&\lesssim\epsilon^q,
\end{align*}
which, combined with Proposition \ref{cpt-eq}, implies that $S^{t,\alpha,\beta}_{\mu}:F^p_{\alpha,w}\to L^q(wdv)$ is compact.
\end{proof}

\begin{theorem}\label{cpt>}
Let $0<q<p<\infty$, $0<t,\alpha,\beta<\infty$, $w\in A^{\res}_{\infty}$, and let $\mu$ be a positive Borel measure on $\C^n$. Then $S^{t,\alpha,\beta}_{\mu}:F^p_{\alpha,w}\to L^q(wdv)$ is compact if and only if $G_{\mu}\in L^{\frac{pq}{p-q}}(dv)$.
\end{theorem}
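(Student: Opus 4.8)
The plan is to deduce the equivalence from the boundedness theorem already proved, upgrading boundedness to compactness through the sequential criterion. Since every compact operator is bounded and Theorem \ref{bdd>} identifies boundedness of $S^{t,\alpha,\beta}_{\mu}:F^p_{\alpha,w}\to L^q(wdv)$ with the membership $G_{\mu}\in L^{\frac{pq}{p-q}}(dv)$, the implication from compactness to $G_{\mu}\in L^{\frac{pq}{p-q}}(dv)$ is automatic. Thus the whole content is the converse: $G_{\mu}\in L^{\frac{pq}{p-q}}(dv)$ should force compactness.

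So I would assume $G_{\mu}\in L^{\frac{pq}{p-q}}(dv)$. Then $S^{t,\alpha,\beta}_{\mu}$ is bounded by Theorem \ref{bdd>}, and I may invoke Proposition \ref{cpt-eq}: it suffices to prove that for every bounded sequence $\{f_k\}\subset F^p_{\alpha,w}$ converging to $0$ uniformly on compact subsets of $\C^n$ one has $\|S^{t,\alpha,\beta}_{\mu}f_k\|_{L^q(wdv)}\to0$. Applying Proposition \ref{suff} gives
$$\left\|S^{t,\alpha,\beta}_{\mu}f_k\right\|^q_{L^q(wdv)}\lesssim\sum_{\nu\in\Z^{2n}}b_{\nu}\,a_{k,\nu}^{q/p},$$
where $b_{\nu}=\mu(Q_1(\nu))^{q/t}/w(Q_1(\nu))^{q/p-1}$ and $a_{k,\nu}=\int_{B_{2n}(\nu)}|f_k(u)|^pe^{-\frac{p\alpha}{2}|u|^2}w(u)dv(u)$. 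Because $(q/p-1)\cdot\frac{p}{p-q}=-1$, the sequence $b_{\nu}^{p/(p-q)}$ is exactly the discretized density that Lemma \ref{dis2} controls, so $\sum_{\nu}b_{\nu}^{p/(p-q)}\asymp\|G_{\mu}\|_{L^{\frac{pq}{p-q}}(dv)}^{\frac{pq}{p-q}}<\infty$; in particular each $b_{\nu}$ is finite and the tails $\sum_{|\nu|>R}b_{\nu}^{p/(p-q)}$ tend to $0$ as $R\to\infty$. Also, the balls $B_{2n}(\nu)$ have bounded overlap, whence $\sum_{\nu}a_{k,\nu}\lesssim\|f_k\|^p_{F^p_{\alpha,w}}\lesssim1$ uniformly in $k$.

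The finish is a head--tail decomposition. Fix $\epsilon>0$. On the tail $|\nu|>R$, H\"{o}lder's inequality with exponents $\frac{p}{p-q}$ and $\frac{p}{q}$ together with the uniform bound on $\sum_{\nu}a_{k,\nu}$ gives
$$\sum_{|\nu|>R}b_{\nu}\,a_{k,\nu}^{q/p}\lesssim\left(\sum_{|\nu|>R}b_{\nu}^{p/(p-q)}\right)^{\frac{p-q}{p}},$$
which is $<\epsilon$ for all $k$ once $R$ is large, by the tail estimate above. For this fixed $R$ the head $\sum_{|\nu|\le R}b_{\nu}\,a_{k,\nu}^{q/p}$ is a finite sum; since $f_k\to0$ uniformly on the compact set $\bigcup_{|\nu|\le R}B_{2n}(\nu)$, each $a_{k,\nu}\to0$ as $k\to\infty$, so the head is $<\epsilon$ for $k$ large. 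Hence $\|S^{t,\alpha,\beta}_{\mu}f_k\|_{L^q(wdv)}\to0$, and compactness follows from Proposition \ref{cpt-eq}. The main obstacle is precisely that $\sum_{\nu}a_{k,\nu}$ is merely bounded and not small, so a single application of H\"{o}lder's inequality only recovers boundedness; the summability of $\{b_{\nu}^{p/(p-q)}\}$ from Lemma \ref{dis2} (for the tail) combined with local uniform convergence (for the finite head) is what turns boundedness into the required decay.
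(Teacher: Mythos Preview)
Your proposal is correct and follows essentially the same route as the paper: necessity via Theorem~\ref{bdd>}, sufficiency via Proposition~\ref{suff} followed by a head--tail split of the lattice sum, with H\"older's inequality (exponents $p/(p-q)$ and $p/q$) and Lemma~\ref{dis2} controlling the tail and local uniform convergence handling the finite head, concluding through Proposition~\ref{cpt-eq}. The only cosmetic difference is that the paper bounds the head by inserting $|f_k|<\epsilon$ pointwise and then applying H\"older once more, whereas you simply note that a finite sum of terms each tending to zero tends to zero; both are equally valid.
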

\begin{proof}
In view of Theorem \ref{bdd>}, it suffices to prove the sufficiency. Suppose that $G_{\mu}\in L^{\frac{pq}{p-q}}(dv)$, and let $\{f_k\}\subset F^p_{\alpha,w}$ be a bounded sequence that converges to $0$ uniformly on compact subsets of $\C^n$. For any $\epsilon>0$, we can apply Lemma \ref{dis2} to find $R>0$ such that
$$\sum_{\nu\in\Z^{2n}\setminus B_R(0)}\left(\frac{\mu(Q_1(\nu))^{\frac{1}{t}}}{w(Q_1(\nu))^{\frac{1}{p}-\frac{1}{q}}}\right)^{\frac{pq}{p-q}}
<\epsilon^{\frac{pq}{p-q}}.$$
Since $f_k\to0$ uniformly on compact subsets of $\C^n$, there exists $K\geq1$ such that for any $k>K$ and $z\in \bigcup_{\nu\in\Z^{2n}\cap B_R(0)}B_{2n}(\nu)$,
$$|f_k(z)|<\epsilon.$$
Then, using Proposition \ref{suff}, H\"{o}lder's inequality, Lemma \ref{dis2} and Remark \ref{remark}, we have that for $k>K$,
\begin{align*}
\left\|S^{t,\alpha,\beta}_{\mu}f_k\right\|^q_{L^q(wdv)}
&\lesssim\sum_{\nu\in\Z^{2n}}\frac{\mu(Q_1(\nu))^{\frac{q}{t}}}{w(Q_1(\nu))^{\frac{q}{p}-1}}
  \left(\int_{B_{2n}(\nu)}|f_k(u)|^pe^{-\frac{p\alpha}{2}|u|^2}w(u)dv(u)\right)^{\frac{q}{p}}\\
&\lesssim\epsilon^q\|G_{\mu}\|^q_{L^{\frac{pq}{p-q}}(dv)}
    \left(\sum_{\nu\in\Z^{2n}\cap B_R(0)}\int_{B_{2n}(\nu)}e^{-\frac{p\alpha}{2}|u|^2}w(u)dv(u)\right)^{\frac{q}{p}}\\
&\qquad+\epsilon^q\left(\sum_{\nu\in\Z^{2n}\setminus B_R(0)}
    \int_{B_{2n}(\nu)}|f_k(u)|^pe^{-\frac{p\alpha}{2}|u|^2}w(u)dv(u)\right)^{\frac{q}{p}}\\
&\lesssim\epsilon^q\|G_{\mu}\|^q_{L^{\frac{pq}{p-q}}(dv)}\left(\int_{\C^n}e^{-\frac{p\alpha}{2}|u|^2}w(u)dv(u)\right)^{\frac{q}{p}}
    +\epsilon^q\|f_k\|^q_{F^p_{\alpha,w}}\\
&\lesssim\epsilon^q.
\end{align*}
Therefore, $\|S^{t,\alpha,\beta}_{\mu}f_k\|_{L^q(wdv)}\to0$, which, in conjunction with Proposition \ref{cpt-eq}, implies the desired compactness.
\end{proof}

Putting Theorems \ref{bdd<}, \ref{bdd>}, \ref{cpt<} and \ref{cpt>} together, we establish Theorem \ref{main1}.

\section{Proof of Theorem \ref{main2}}\label{proof2}

In this section, we are going to prove Theorem \ref{main2}. We first focus on the boundedness. Recall that
$$H_{\mu}(z)=\frac{\mu(B_1(z))}{w(B_1(z))^{\frac{1}{p}-\frac{1}{q}}},\quad z\in\C^n.$$

\begin{theorem}\label{T-bdd}
Let $0<p,q,\alpha<\infty$, $w\in A^{\res}_{\infty}$, and let $\mu$ be a positive Borel measure on $\C^n$.
\begin{enumerate}
	\item [(1)] If $p\leq q$, then $T^{\alpha}_{\mu}:F^p_{\alpha,w}\to F^q_{\alpha,w}$ is bounded if and only if $H_{\mu}$ is bounded on $\C^n$. Moreover,
	$$\left\|T^{\alpha}_{\mu}\right\|_{F^p_{\alpha,w}\to F^q_{\alpha,w}}\asymp\sup_{z\in\C^n}H_{\mu}(z).$$
	\item [(2)] If $p>q$, then $T^{\alpha}_{\mu}:F^p_{\alpha,w}\to F^q_{\alpha,w}$ is bounded if and only if $H_{\mu}\in L^{\frac{pq}{p-q}}(dv)$. Moreover,
	$$\left\|T^{\alpha}_{\mu}\right\|_{F^p_{\alpha,w}\to F^q_{\alpha,w}}\asymp\|H_{\mu}\|_{L^{\frac{pq}{p-q}}(dv)}.$$
\end{enumerate}
\end{theorem}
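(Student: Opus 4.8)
The plan is to deduce Theorem \ref{T-bdd} from Theorem \ref{main1} together with the pointwise domination \eqref{bt}. The crucial observation is that $S^{1,\alpha,\alpha}_{\mu}$ is precisely the Berezin-type operator of Theorem \ref{main1} with $t=1$ and $\beta=\alpha$, for which the governing function $G_{\mu}$ collapses to $H_{\mu}$. Hence Theorems \ref{bdd<} and \ref{bdd>} already characterize the boundedness of $S^{1,\alpha,\alpha}_{\mu}:F^p_{\alpha,w}\to L^q(wdv)$ in terms of $H_{\mu}$, and the task reduces to transferring this to $T^{\alpha}_{\mu}$. For the \emph{sufficiency} in both cases I would simply invoke \eqref{bt}, which gives $\|T^{\alpha}_{\mu}f\|_{F^q_{\alpha,w}}\leq\|S^{1,\alpha,\alpha}_{\mu}f\|_{L^q(wdv)}$ for every $f$, hence $\|T^{\alpha}_{\mu}\|_{F^p_{\alpha,w}\to F^q_{\alpha,w}}\leq\|S^{1,\alpha,\alpha}_{\mu}\|_{F^p_{\alpha,w}\to L^q(wdv)}$; thus if $H_{\mu}$ is bounded (when $p\leq q$) or $H_{\mu}\in L^{\frac{pq}{p-q}}(dv)$ (when $p>q$), Theorem \ref{main1} makes $S^{1,\alpha,\alpha}_{\mu}$ bounded and the corresponding upper bound on $\|T^{\alpha}_{\mu}\|$ follows at once.

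The \emph{necessity} is the real content, and the main obstacle is that the Toeplitz kernel $e^{\alpha\langle z,u\rangle-\alpha|u|^2}$ is complex-valued, so the integral defining $T^{\alpha}_{\mu}f$ may suffer phase cancellation that the modulus inside $S^{t,\alpha,\beta}_{\mu}$ never sees; consequently one cannot just run the lower-bound computations of Theorems \ref{bdd<}--\ref{bdd>} verbatim. The device to bypass this is to evaluate the Toeplitz transform of a reproducing-kernel test function \emph{at its own center}. For the normalized kernel $f_z(u)=e^{\alpha\langle u,z\rangle-\frac{\alpha}{2}|z|^2}/w(B_1(z))^{1/p}$ one has $\langle u,z\rangle+\langle z,u\rangle=2\,\mathrm{Re}\langle u,z\rangle\in\mathbb{R}$, so the integrand of $T^{\alpha}_{\mu}f_z(z)$ is real and positive; restricting the integral to $B_1(z)$ and using $2\,\mathrm{Re}\langle u,z\rangle-|u|^2=|z|^2-|z-u|^2\geq|z|^2-1$ yields $|T^{\alpha}_{\mu}f_z(z)|\,e^{-\frac{\alpha}{2}|z|^2}\gtrsim\mu(B_1(z))/w(B_1(z))^{1/p}$.

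When $p\leq q$ this already suffices. By Lemma \ref{test} we have $\|f_z\|_{F^p_{\alpha,w}}\asymp1$, and a single application of the pointwise estimate of Lemma \ref{pointwise} to $g=T^{\alpha}_{\mu}f_z\in F^q_{\alpha,w}$ at the point $z$ gives
\[
\|T^{\alpha}_{\mu}f_z\|^q_{F^q_{\alpha,w}}\gtrsim w(B_1(z))\,|T^{\alpha}_{\mu}f_z(z)|^q e^{-\frac{q\alpha}{2}|z|^2}\gtrsim H_{\mu}(z)^q,
\]
so that $\|T^{\alpha}_{\mu}\|_{F^p_{\alpha,w}\to F^q_{\alpha,w}}\gtrsim\sup_{z\in\C^n}H_{\mu}(z)$, completing case (1).

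For the necessity when $p>q$ --- the hardest part --- I would run the Rademacher-function argument of Theorem \ref{bdd>} on $T^{\alpha}_{\mu}$, but access the point values through a \emph{reverse} form of Lemma \ref{pointwise}. Summing the pointwise estimate over the lattice $\Z^{2n}$ and using the bounded overlap of $\{B_1(\nu)\}$ and Remark \ref{remark} gives, for every $g\in F^q_{\alpha,w}$,
\[
\|g\|^q_{F^q_{\alpha,w}}\gtrsim\sum_{\nu\in\Z^{2n}}w(Q_1(\nu))\,|g(\nu)|^q e^{-\frac{q\alpha}{2}|\nu|^2}.
\]
Applying this to $g=T^{\alpha}_{\mu}F_{\tau}$ with $F_{\tau}=\sum_{\nu}c_{\nu}r_{\nu}(\tau)f_{\nu}$ and $f_{\nu}=K^{\alpha}_{\nu}/\|K^{\alpha}_{\nu}\|_{F^p_{\alpha,w}}$, integrating in $\tau$, and applying Khinchine's inequality to the scalars $\{c_{\nu'}T^{\alpha}_{\mu}f_{\nu'}(\nu)\}_{\nu'}$, I would retain only the diagonal term $\nu'=\nu$. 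Since $T^{\alpha}_{\mu}f_{\nu}(\nu)$ is again a center evaluation, its integrand is real positive and the estimate above yields $|T^{\alpha}_{\mu}f_{\nu}(\nu)|^q e^{-\frac{q\alpha}{2}|\nu|^2}\gtrsim\mu(B_1(\nu))^q/w(B_1(\nu))^{q/p}$. Combining with $\int_0^1\|F_{\tau}\|^q_{F^p_{\alpha,w}}d\tau\lesssim\|c\|^q_{l^p(\Z^{2n})}$ (Lemma \ref{test}(2)) gives
\[
\sum_{\nu\in\Z^{2n}}|c_{\nu}|^q\,H_{\mu}(\nu)^q\lesssim\|T^{\alpha}_{\mu}\|^q_{F^p_{\alpha,w}\to F^q_{\alpha,w}}\,\|c\|^q_{l^p(\Z^{2n})}.
\]
The arbitrariness of $c$ and the $l^{p/q}$--$l^{p/(p-q)}$ duality (exactly as in Theorem \ref{bdd>}) convert this into a bound on $\sum_{\nu}H_{\mu}(\nu)^{\frac{pq}{p-q}}$, which by Lemma \ref{dis2} is $\asymp\|H_{\mu}\|^{\frac{pq}{p-q}}_{L^{\frac{pq}{p-q}}(dv)}$; hence $\|H_{\mu}\|_{L^{\frac{pq}{p-q}}(dv)}\lesssim\|T^{\alpha}_{\mu}\|_{F^p_{\alpha,w}\to F^q_{\alpha,w}}$. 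The only genuinely delicate point is the center-evaluation step resolving the complex cancellation; the remaining ingredients (the $B_1(\nu)$-to-$Q_1(\nu)$ passage, the overlap bounds, and the duality) are routine and already appear in Lemmas \ref{dis1}--\ref{dis2} and in the proof of Theorem \ref{bdd>}. Together the sufficiency ($\lesssim$) and necessity ($\gtrsim$) give the claimed two-sided estimates.
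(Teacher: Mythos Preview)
Your proposal is correct and follows essentially the same approach as the paper: sufficiency via \eqref{bt} and Theorem~\ref{main1}, necessity in case~(1) via the center evaluation $T^{\alpha}_{\mu}f_z(z)$ together with Lemma~\ref{pointwise}, and necessity in case~(2) via Rademacher functions, Khinchine's inequality, the diagonal center evaluations $T^{\alpha}_{\mu}f_{\nu}(\nu)$, duality, and Lemma~\ref{dis2}. The only cosmetic difference in part~(2) is the order of operations---the paper applies Khinchine at every $z\in\C^n$ and then localizes with $\chi_{Q_1(\nu)}$ before invoking Lemma~\ref{pointwise}, whereas you first pass to the lattice via the discrete lower bound and then apply Khinchine at each $\nu$; the content is identical.
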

\begin{proof}
The sufficiency follows from \eqref{bt}, Theorems \ref{bdd<} and \ref{bdd>}. We consider the necessity. Suppose that $T^{\alpha}_{\mu}:F^p_{\alpha,w}\to F^q_{\alpha,w}$ is bounded.

(1) For any $z\in\C^n$, consider the function
$$f_z(u):=\frac{e^{\alpha\langle u,z\rangle-\frac{\alpha}{2}|z|^2}}{w(B_1(z))^{1/p}},\quad u\in\C^n.$$
Then by Lemma \ref{test}, $f_z\in F^p_{\alpha,w}$ with $\|f_z\|_{F^p_{\alpha,w}}\asymp1$. Consequently, we may use Lemma \ref{pointwise} and the boundedness of $T^{\alpha}_{\mu}:F^p_{\alpha,w}\to F^q_{\alpha,w}$ to establish that
\begin{align}\label{T-nece}
\nonumber\frac{\left\|T^{\alpha}_{\mu}\right\|^q}{w(B_1(z))}
\nonumber&\gtrsim\frac{1}{w(B_1(z))}\int_{B_1(z)}\left|T^{\alpha}_{\mu}f_z(u)\right|^qe^{-\frac{q\alpha}{2}|u|^2}w(u)dv(u)\\
\nonumber&\gtrsim\left|T^{\alpha}_{\mu}f_z(z)\right|^qe^{-\frac{q\alpha}{2}|z|^2}\\
\nonumber&=\left|\int_{\C^n}f_z(\xi)e^{\alpha\langle z,\xi\rangle-\alpha|\xi|^2}d\mu(\xi)\right|^qe^{-\frac{q\alpha}{2}|z|^2}\\
\nonumber&=\frac{1}{w(B_1(z))^{\frac{q}{p}}}\left(\int_{\C^n}e^{-\alpha|z-\xi|^2}d\mu(\xi)\right)^q\\
&\gtrsim\frac{\mu(B_1(z))^q}{w(B_1(z))^{\frac{q}{p}}},
\end{align}
which gives the desired result.

(2) Consider the function
$$F_{\tau}(z):=\sum_{\nu\in\Z^{2n}}c_{\nu}r_{\nu}(\tau)f_{\nu},$$
where $c=\{c_{\nu}\}_{\nu\in\Z^{2n}}\in l^p(\Z^{2n})$, $\{r_{\nu}\}_{\nu\in \Z^{2n}}$ is a sequence of Rademacher functions on $[0,1]$ (see \cite[Appendix A]{Du}), and
$$f_{\nu}=\frac{K^{\alpha}_{\nu}}{\|K^{\alpha}_{\nu}\|_{F^p_{\alpha,w}}}.$$
Then by Lemma \ref{test}, for almost every $\tau\in[0,1]$, $\|F_{\tau}\|_{F^p_{\alpha,w}}\lesssim\|c\|_{l^p(\Z^{2n})}$. Since $T^{\alpha}_{\mu}:F^p_{\alpha,w}\to F^q_{\alpha,w}$ is bounded, we have
$$\int_0^1\left\|T^{\alpha}_{\mu}F_{\tau}\right\|^q_{F^q_{\alpha,w}}d\tau
\lesssim\left\|T^{\alpha}_{\mu}\right\|^q\cdot\|c\|^q_{l^p(\Z^{2n})}.$$
Using Fubini's theorem and Khinchine's inequality (see \cite[Appendix A]{Du}), we obtain that
\begin{align*}
\int_0^1\left\|T^{\alpha}_{\mu}F_{\tau}\right\|^q_{F^q_{\alpha,w}}d\tau
&=\int_{\C^n}\int_0^1\left|\sum_{\nu\in\Z^{2n}}c_{\nu}r_{\nu}(\tau)T^{\alpha}_{\mu}f_{\nu}(z)\right|^q
    d\tau e^{-\frac{q\alpha}{2}|z|^2}w(z)dv(z)\\
&\asymp\int_{\C^n}\left(\sum_{\nu\in\Z^{2n}}|c_{\nu}|^2\left|T^{\alpha}_{\mu}f_{\nu}(z)\right|^2\right)^{\frac{q}{2}}
    e^{-\frac{q\alpha}{2}|z|^2}w(z)dv(z)\\
&\geq\int_{\C^n}\left(\sum_{\nu\in\Z^{2n}}|c_{\nu}|^2\left|T^{\alpha}_{\mu}f_{\nu}(z)\right|^2\chi_{Q_1(\nu)}(z)\right)^{\frac{q}{2}}
    e^{-\frac{q\alpha}{2}|z|^2}w(z)dv(z)\\
&=\sum_{\nu\in\Z^{2n}}|c_{\nu}|^q\int_{Q_1(\nu)}\left|T^{\alpha}_{\mu}f_{\nu}(z)\right|^qe^{-\frac{q\alpha}{2}|z|^2}w(z)dv(z),
\end{align*}
which, combined with Lemmas \ref{pointwise}, \ref{test} and Remark \ref{remark}, implies that
\begin{align*}
\int_0^1\left\|T^{\alpha}_{\mu}F_{\tau}\right\|^q_{F^q_{\alpha,w}}d\tau
&\gtrsim\sum_{\nu\in\Z^{2n}}|c_{\nu}|^qw(Q_1(\nu))\left|T^{\alpha}_{\mu}f_{\nu}(\nu)\right|^qe^{-\frac{q\alpha}{2}|\nu|^2}\\
&=\sum_{\nu\in\Z^{2n}}|c_{\nu}|^q\frac{w(Q_1(\nu))}{\|K^{\alpha}_{\nu}\|^q_{F^p_{\alpha,w}}}
    \left(\int_{\C^n}e^{2\alpha\Re\langle u,\nu\rangle-\alpha|u|^2-\frac{\alpha}{2}|\nu|^2}d\mu(u)\right)^q\\
&\asymp\sum_{\nu\in\Z^{2n}}|c_{\nu}|^qw(Q_1(\nu))^{1-\frac{q}{p}}\left(\int_{\C^n}e^{-\alpha|u-\nu|^2}d\mu(u)\right)^q\\
&\gtrsim\sum_{\nu\in\Z^{2n}}|c_{\nu}|^q\cdot\frac{\mu(Q_1(\nu))^q}{w(Q_1(\nu))^{\frac{q}{p}-1}}.
\end{align*}
Therefore, we get that
$$\sum_{\nu\in\Z^{2n}}|c_{\nu}|^q\cdot\frac{\mu(Q_1(\nu))^q}{w(Q_1(\nu))^{\frac{q}{p}-1}}\lesssim
\left\|T^{\alpha}_{\mu}\right\|^q\cdot\|c\|^q_{l^p(\Z^{2n})}.$$
By the duality and Lemma \ref{dis2}, we conclude that $H_{\mu}\in L^{\frac{pq}{p-q}}(dv)$, and $$\|H_{\mu}\|_{L^{\frac{pq}{p-q}}(dv)}\lesssim\left\|T^{\alpha}_{\mu}\right\|.$$
The proof is complete.
\end{proof}

We now turn to the compactness. The following lemma is needed.

\begin{lemma}\label{Tuni0}
Let $0<p,q,\alpha<\infty$, $w\in A^{\res}_{\infty}$, and let $\mu$ be a positive Borel measure on $\C^n$. Suppose that $T^{\alpha}_{\mu}:F^p_{\alpha,w}\to F^q_{\alpha,w}$ is bounded. Then for any bounded sequence $\{f_k\}\subset F^p_{\alpha,w}$ that converges to $0$ uniformly on compact subsets of $\C^n$, we have $T^{\alpha}_{\mu}f_k\to0$ uniformly on compact subsets of $\C^n$.
\end{lemma}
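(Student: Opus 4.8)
The plan is to reduce this lemma to its Berezin-type analogue, Lemma \ref{uni0} applied with $t=1$ and $\beta=\alpha$, by dominating $T^{\alpha}_{\mu}$ pointwise by $S^{1,\alpha,\alpha}_{\mu}$. The only genuinely new work is to verify that the auxiliary operator $S^{1,\alpha,\alpha}_{\mu}$ is bounded under our hypothesis, after which the Gaussian factor relating the two operators is harmless on compact sets.

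First I would record the pointwise inequality underlying \eqref{bt}: for every $f\in\h(\C^n)$ and $z\in\C^n$,
$$\left|T^{\alpha}_{\mu}f(z)\right|e^{-\frac{\alpha}{2}|z|^2}\leq S^{1,\alpha,\alpha}_{\mu}f(z).$$
This follows by applying the triangle inequality inside the integral defining $T^{\alpha}_{\mu}f$ and then using the elementary identity $e^{-\frac{\alpha}{2}|z-u|^2}e^{-\frac{\alpha}{2}|u|^2}=e^{-\frac{\alpha}{2}|z|^2}e^{\alpha\Re\langle z,u\rangle-\alpha|u|^2}$, which turns the integrand bounding $\left|T^{\alpha}_{\mu}f(z)\right|e^{-\frac{\alpha}{2}|z|^2}$ into exactly the integrand defining $S^{1,\alpha,\alpha}_{\mu}f(z)$.

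Next I would establish that $S^{1,\alpha,\alpha}_{\mu}:F^p_{\alpha,w}\to L^q(wdv)$ is bounded. The key observation is that when $t=1$ the function $G_{\mu}$ of Theorem \ref{main1} coincides with the function $H_{\mu}$ of Theorem \ref{main2}, namely
$$G_{\mu}(z)=\frac{\mu(B_1(z))^{1}}{w(B_1(z))^{\frac{1}{p}-\frac{1}{q}}}=H_{\mu}(z).$$
Since $T^{\alpha}_{\mu}:F^p_{\alpha,w}\to F^q_{\alpha,w}$ is assumed bounded, Theorem \ref{T-bdd} yields that $H_{\mu}$ is bounded on $\C^n$ when $p\leq q$, and $H_{\mu}\in L^{\frac{pq}{p-q}}(dv)$ when $p>q$. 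Via the identity $G_{\mu}=H_{\mu}$ together with Theorems \ref{bdd<} and \ref{bdd>}, this is precisely the condition characterizing the boundedness of $S^{1,\alpha,\alpha}_{\mu}:F^p_{\alpha,w}\to L^q(wdv)$. I would stress that this is the one step requiring care: the inequality \eqref{bt} runs in the wrong direction (it deduces boundedness of $T^{\alpha}_{\mu}$ from that of $S^{1,\alpha,\alpha}_{\mu}$, not conversely), so to obtain boundedness of $S^{1,\alpha,\alpha}_{\mu}$ one genuinely must pass through the quantitative characterizations and the coincidence $G_{\mu}=H_{\mu}$.

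Finally I would invoke Lemma \ref{uni0} for the operator $S^{1,\alpha,\alpha}_{\mu}$: for the given bounded sequence $\{f_k\}\subset F^p_{\alpha,w}$ converging to $0$ uniformly on compact subsets of $\C^n$, we obtain $S^{1,\alpha,\alpha}_{\mu}f_k\to0$ uniformly on compact subsets. Fixing $R_0>0$ and using the pointwise bound from the first step, on the ball $B_{R_0}(0)$ the factor $e^{\frac{\alpha}{2}|z|^2}$ is bounded by $e^{\frac{\alpha}{2}R_0^2}$, so
$$\sup_{z\in B_{R_0}(0)}\left|T^{\alpha}_{\mu}f_k(z)\right|\leq e^{\frac{\alpha}{2}R_0^2}\sup_{z\in B_{R_0}(0)}S^{1,\alpha,\alpha}_{\mu}f_k(z)\longrightarrow0$$
as $k\to\infty$. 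Since $R_0>0$ is arbitrary, $T^{\alpha}_{\mu}f_k\to0$ uniformly on every compact subset of $\C^n$, which is the assertion. In summary, there is no substantial new obstacle here; once the auxiliary boundedness of $S^{1,\alpha,\alpha}_{\mu}$ is secured, the result is a clean consequence of Lemma \ref{uni0} and the pointwise domination.
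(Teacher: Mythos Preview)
Your proposal is correct and follows essentially the same approach as the paper: deduce boundedness of $S^{1,\alpha,\alpha}_{\mu}$ from that of $T^{\alpha}_{\mu}$ via Theorems \ref{T-bdd}, \ref{bdd<} and \ref{bdd>} (using $G_{\mu}=H_{\mu}$ when $t=1$), then apply Lemma \ref{uni0} together with the pointwise bound $\left|T^{\alpha}_{\mu}f(z)\right|e^{-\frac{\alpha}{2}|z|^2}\leq S^{1,\alpha,\alpha}_{\mu}f(z)$. Your write-up is in fact more detailed than the paper's, which compresses all of this into two sentences.
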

\begin{proof}
Since $T^{\alpha}_{\mu}:F^p_{\alpha,w}\to F^q_{\alpha,w}$ is bounded, we deduce from Theorems \ref{T-bdd}, \ref{bdd<} and \ref{bdd>} that $S^{1,\alpha,\alpha}_{\mu}:F^p_{\alpha,w}\to L^q(wdv)$ is bounded. Noting that for any $f\in F^p_{\alpha,w}$ and $z\in\C^n$,
$$\left|T^{\alpha}_{\mu}f(z)\right|e^{-\frac{\alpha}{2}|z|^2}\leq S^{1,\alpha,\alpha}_{\mu}f(z),$$
the desired result follows from Lemma \ref{uni0}.
\end{proof}

Based on Lemma \ref{Tuni0}, we can apply Lemma \ref{pointwise} and Montel's theorem to obtain the following equivalent description for the compactness of $T^{\alpha}_{\mu}:F^p_{\alpha,w}\to F^q_{\alpha,w}$. The proof is similar to that of Proposition \ref{cpt-eq} and so is omitted.

\begin{proposition}\label{Tcpt-eq}
Let $0<p,q,\alpha<\infty$, $w\in A^{\res}_{\infty}$, and let $\mu$ be a positive Borel measure on $\C^n$. Suppose that $T^{\alpha}_{\mu}:F^p_{\alpha,w}\to F^q_{\alpha,w}$ is bounded. Then the following statements are equivalent:
\begin{enumerate}
	\item [(i)] $T^{\alpha}_{\mu}:F^p_{\alpha,w}\to F^q_{\alpha,w}$ is compact;
	\item [(ii)] for any bounded sequence $\{f_k\}\subset F^p_{\alpha,w}$ that converges to $0$ uniformly on compact subsets of $\C^n$, we have
	$$\left\|T^{\alpha}_{\mu}f_k\right\|_{F^q_{\alpha,w}}\to0.$$
\end{enumerate}
\end{proposition}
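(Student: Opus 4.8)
The plan is to mimic the proof of Proposition \ref{cpt-eq}, exploiting the fact that $T^{\alpha}_{\mu}$ is \emph{linear}; this linearity removes the Minkowski-type case analysis (over the ranges of $t$ and $q$) that was needed for the nonlinear operator $S^{t,\alpha,\beta}_{\mu}$, and in particular makes the appeal to Lemma \ref{Duren} unnecessary.

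For the implication (i)$\Rightarrow$(ii) I would use the standard subsequence argument. Let $\{f_k\}$ be bounded in $F^p_{\alpha,w}$ with $f_k\to0$ uniformly on compact subsets of $\C^n$. Given any subsequence $\{f_{k_j}\}$, compactness of $T^{\alpha}_{\mu}$ produces a further subsequence $\{f_{k_{j_l}}\}$ and some $h\in F^q_{\alpha,w}$ with $T^{\alpha}_{\mu}f_{k_{j_l}}\to h$ in $F^q_{\alpha,w}$; passing to yet another subsequence, the convergence may be assumed to hold pointwise a.e.\ as well. On the other hand, Lemma \ref{Tuni0} gives $T^{\alpha}_{\mu}f_k\to0$ uniformly on compact sets, hence pointwise everywhere. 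Comparing the two limits forces $h=0$, so $T^{\alpha}_{\mu}f_{k_{j_l}}\to0$ in $F^q_{\alpha,w}$. Since every subsequence of $\{\|T^{\alpha}_{\mu}f_k\|_{F^q_{\alpha,w}}\}$ has a further subsequence tending to $0$, the whole sequence tends to $0$.

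For (ii)$\Rightarrow$(i) I would take an arbitrary bounded sequence $\{g_k\}$ in $F^p_{\alpha,w}$. By Lemma \ref{pointwise} it is uniformly bounded on compact sets, so Montel's theorem extracts a subsequence (still denoted $\{g_k\}$) converging uniformly on compacts to some entire $g$, and Fatou's lemma places $g$ in $F^p_{\alpha,w}$. Then $\{g_k-g\}$ is bounded in $F^p_{\alpha,w}$ and converges to $0$ uniformly on compact sets, so (ii) yields $\|T^{\alpha}_{\mu}(g_k-g)\|_{F^q_{\alpha,w}}\to0$. Here linearity is decisive: $T^{\alpha}_{\mu}(g_k-g)=T^{\alpha}_{\mu}g_k-T^{\alpha}_{\mu}g$, so $T^{\alpha}_{\mu}g_k\to T^{\alpha}_{\mu}g$ in $F^q_{\alpha,w}$. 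Thus every bounded sequence in $F^p_{\alpha,w}$ admits a subsequence whose image converges in $F^q_{\alpha,w}$, which is exactly the compactness of $T^{\alpha}_{\mu}$.

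I do not anticipate a genuine obstacle. The only point that needs a line of justification is the passage from $F^q_{\alpha,w}$-norm convergence to a.e.\ convergence of a subsequence in (i)$\Rightarrow$(ii), which is the routine fact that convergence in the (quasi-)norm of $F^q_{\alpha,w}$ implies pointwise a.e.\ convergence along a subsequence. Everything else parallels Proposition \ref{cpt-eq}, with the nonlinear complications stripped away by the identity $T^{\alpha}_{\mu}(g_k-g)=T^{\alpha}_{\mu}g_k-T^{\alpha}_{\mu}g$.
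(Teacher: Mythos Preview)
Your proposal is correct and follows essentially the same route as the paper, which omits the proof and merely notes that it parallels Proposition~\ref{cpt-eq} via Lemma~\ref{Tuni0}, Lemma~\ref{pointwise}, and Montel's theorem. Your observation that the linearity of $T^{\alpha}_{\mu}$ makes the Minkowski-type case analysis and the appeal to Lemma~\ref{Duren} unnecessary is apt and gives a cleaner argument than a verbatim transcription of the nonlinear case would.
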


The following theorem characterizes the compactness of $T^{\alpha}_{\mu}:F^p_{\alpha,w}\to F^q_{\alpha,w}$.

\begin{theorem}\label{T-cpt}
Let $0<p,q,\alpha<\infty$, $w\in A^{\res}_{\infty}$, and let $\mu$ be a positive Borel measure on $\C^n$.
\begin{enumerate}
	\item [(1)] If $p\leq q$ and $\mu$ is finite on compact subsets of $\C^n$, then $T^{\alpha}_{\mu}:F^p_{\alpha,w}\to F^q_{\alpha,w}$ is compact if and only if $\lim_{|z|\to0}H_{\mu}(z)=0$.
	\item [(2)] If $p>q$, then $T^{\alpha}_{\mu}:F^p_{\alpha,w}\to F^q_{\alpha,w}$ is compact if and only if $H_{\mu}\in L^{\frac{pq}{p-q}}(dv)$.
\end{enumerate}
\end{theorem}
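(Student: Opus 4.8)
The plan is to reduce the compactness of $T^{\alpha}_{\mu}$ to the already-established results for the Berezin-type operator $S^{1,\alpha,\alpha}_{\mu}$, exploiting the crucial observation that when $t=1$ and $\beta=\alpha$ the comparison function $G_{\mu}$ from Theorem \ref{main1} coincides exactly with $H_{\mu}$; that is, $G_{\mu}=H_{\mu}$. The one-directional domination \eqref{bt}, which reads $\|T^{\alpha}_{\mu}f\|_{F^q_{\alpha,w}}\leq\|S^{1,\alpha,\alpha}_{\mu}f\|_{L^q(wdv)}$, will drive the sufficiency, while the test-function lower bound already obtained in \eqref{T-nece} during the proof of Theorem \ref{T-bdd} will drive the necessity in the case $p\leq q$. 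Throughout I will use Proposition \ref{Tcpt-eq} to replace compactness by the sequential condition that $\|T^{\alpha}_{\mu}f_k\|_{F^q_{\alpha,w}}\to0$ for every bounded $\{f_k\}\subset F^p_{\alpha,w}$ tending to $0$ uniformly on compact subsets of $\C^n$.

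For the sufficiency in both cases, assume the relevant condition on $H_{\mu}$: $\lim_{|z|\to\infty}H_{\mu}(z)=0$ when $p\leq q$ (together with $\mu$ finite on compact subsets), or $H_{\mu}\in L^{\frac{pq}{p-q}}(dv)$ when $p>q$. Since $G_{\mu}=H_{\mu}$, Theorems \ref{bdd<} and \ref{bdd>} first guarantee that $S^{1,\alpha,\alpha}_{\mu}:F^p_{\alpha,w}\to L^q(wdv)$ is bounded, and then Theorems \ref{cpt<} and \ref{cpt>} show it is compact. Given any bounded sequence $\{f_k\}$ converging to $0$ uniformly on compact subsets, Proposition \ref{cpt-eq} yields $\|S^{1,\alpha,\alpha}_{\mu}f_k\|_{L^q(wdv)}\to0$, whence \eqref{bt} forces $\|T^{\alpha}_{\mu}f_k\|_{F^q_{\alpha,w}}\to0$. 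Proposition \ref{Tcpt-eq} then delivers the compactness of $T^{\alpha}_{\mu}$.

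For the necessity, the case $p>q$ is immediate: a compact linear operator is bounded, so Theorem \ref{T-bdd}(2) already gives $H_{\mu}\in L^{\frac{pq}{p-q}}(dv)$. In the case $p\leq q$, I would test against the normalized kernels $f_z(u)=e^{\alpha\langle u,z\rangle-\frac{\alpha}{2}|z|^2}/w(B_1(z))^{1/p}$, which satisfy $\|f_z\|_{F^p_{\alpha,w}}\asymp1$ and converge to $0$ uniformly on compact subsets as $|z|\to\infty$. The chain of estimates in \eqref{T-nece}, with the operator norm replaced by the single-function norm $\|T^{\alpha}_{\mu}f_z\|_{F^q_{\alpha,w}}$ (the submean estimate of Lemma \ref{pointwise} applied to the entire function $T^{\alpha}_{\mu}f_z$ produces exactly this local bound), yields $H_{\mu}(z)\lesssim\|T^{\alpha}_{\mu}f_z\|_{F^q_{\alpha,w}}$. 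If $H_{\mu}(z)\not\to0$, one extracts a sequence $z_k$ with $|z_k|\to\infty$ along which $\|T^{\alpha}_{\mu}f_{z_k}\|_{F^q_{\alpha,w}}\gtrsim\delta>0$; but $\{f_{z_k}\}$ is bounded and tends to $0$ locally uniformly, contradicting Proposition \ref{Tcpt-eq}. Hence $\lim_{|z|\to\infty}H_{\mu}(z)=0$.

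The argument is largely a transfer of the already-proved $S$-results, so I do not expect a serious obstacle; the one point requiring care is that \eqref{bt} is a one-way domination, so it alone cannot recover the necessity -- the genuine lower bound in the case $p\leq q$ must come from the explicit test functions and the reproducing-type estimate \eqref{T-nece}, and the sequential formulation of Proposition \ref{Tcpt-eq} must be promoted to the continuous limit $|z|\to\infty$ by the standard subsequence argument indicated above.
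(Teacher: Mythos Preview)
Your proposal is correct and follows essentially the same route as the paper's proof: sufficiency is obtained by transferring the compactness of $S^{1,\alpha,\alpha}_{\mu}$ through the domination \eqref{bt} via the sequential criteria of Propositions \ref{cpt-eq} and \ref{Tcpt-eq}, the necessity for $p>q$ comes from boundedness and Theorem \ref{T-bdd}, and the necessity for $p\leq q$ uses the normalized kernels $f_z$ together with the lower bound \eqref{T-nece}. Your explicit subsequence justification for passing from the sequential criterion in Proposition \ref{Tcpt-eq} to the continuous limit $|z|\to\infty$ is a welcome clarification that the paper leaves implicit.
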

\begin{proof}
The sufficiency follows from Theorem \ref{main1}, Propositions \ref{cpt-eq} and \ref{Tcpt-eq} and the inequality \eqref{bt}. The necessity of (2)  follows from Theorem \ref{T-bdd}. Suppose finally that $p\leq q$ and $T^{\alpha}_{\mu}:F^p_{\alpha,w}\to F^q_{\alpha,w}$ is compact. Then by Proposition \ref{Tcpt-eq}, $\lim_{|z|\to\infty}\left\|T^{\alpha}_{\mu}f_z\right\|_{F^q_{\alpha,w}}=0$, where
$$f_z(u)=\frac{e^{\alpha\langle u,z\rangle-\frac{\alpha}{2}|z|^2}}{w(B_1(z))^{1/p}},\quad u\in\C^n.$$
Therefore, by \eqref{T-nece}, we get that $\lim_{|z|\to0}H_{\mu}(z)=0$.
\end{proof}

Combining Theorem \ref{T-bdd} with Theorem \ref{T-cpt} finishes the proof of Theorem \ref{main2}.

\section{Concluding remarks}\label{CR}

We end this paper by a product-type characterization for the Carleson measures of weighted Fock spaces $F^p_{\alpha,w}$. For $0<p,\alpha<\infty$ and a positive Borel measure $\mu$ on $\C^n$, let $L^p_{\alpha}(\mu)$ be the space of measurable functions $f$ on $\C^n$ such that
$$\|f\|^p_{L^p_{\alpha}(\mu)}:=\int_{\C^n}|f(z)|^pe^{-\frac{p\alpha}{2}|z|^2}d\mu(z)<\infty.$$
The measure $\mu$ is said to be a $q$-Carleson measure for $F^p_{\alpha,w}$ if the embedding $I_d:F^p_{\alpha,w}\to L^q_{\alpha}(\mu)$ is bounded. The following theorem characterizes the $q$-Carleson measures for weighted Fock spaces $F^p_{\alpha,w}$ with $w\in A^{\res}_{\infty}$, which was proved in \cite{CFP} in the setting of one complex variable. The case of several complex variables can be proved similarly.

\begin{theorem}\label{CM}
Let $0<p,q,\alpha<\infty$, $w\in A^{\res}_{\infty}$, and let $\mu$ be a positive Borel measure on $\C^n$.
\begin{enumerate}
	\item [(1)] In the case $p\leq q$, $\mu$ is a $q$-Carleson measure for $F^p_{\alpha,w}$ if and only if the function
	$$z\mapsto\frac{\mu(B_1(z))}{w(B_1(z))^{q/p}}$$
	is bounded on $\C^n$.
	\item [(2)] In the case $p>q$, $\mu$ is a $q$-Carleson measure for $F^p_{\alpha,w}$ if and only if the function
	$$z\mapsto\frac{\mu(B_1(z))}{w(B_1(z))}$$
	belongs to $L^{\frac{p}{p-q}}(wdv)$.
\end{enumerate}
\end{theorem}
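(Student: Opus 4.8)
The plan is to prove both directions by discretizing the embedding $I_d:F^p_{\alpha,w}\to L^q_{\alpha}(\mu)$ over the unit cubes $Q_1(\nu)$, $\nu\in\Z^{2n}$, thereby reducing the problem to the same sequence-space estimates already used in the proofs of Theorems \ref{bdd<} and \ref{bdd>}. The only structural differences from the Berezin-operator setting are the absence of the Gaussian convolution factor (the measure $\mu$ now pairs diagonally against $|f|^q$) and the appearance of the exponent $q/p$ in place of $q/t$; neither affects the mechanics, and all the weighted tools needed -- Lemmas \ref{pointwise}, \ref{test}, \ref{dis1}, \ref{dis2} and Remark \ref{remark} -- are already in place.

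For the sufficiency I would start from the pointwise estimate of Lemma \ref{pointwise} applied with exponent $p$, namely
$$|f(z)|^qe^{-\frac{q\alpha}{2}|z|^2}=\left(|f(z)|^pe^{-\frac{p\alpha}{2}|z|^2}\right)^{q/p}
\lesssim\left(\frac{1}{w(B_1(z))}\int_{B_1(z)}|f(u)|^pe^{-\frac{p\alpha}{2}|u|^2}w(u)\,dv(u)\right)^{q/p}.$$
Integrating against $d\mu$, splitting $\C^n=\bigcup_{\nu}Q_1(\nu)$, and using Remark \ref{remark} to replace $w(B_1(z))$ by $w(Q_1(\nu))$ and to absorb $B_1(z)\subset B_{2n}(\nu)$, I obtain
$$\|f\|^q_{L^q_{\alpha}(\mu)}\lesssim\sum_{\nu\in\Z^{2n}}\frac{\mu(Q_1(\nu))}{w(Q_1(\nu))^{q/p}}
\left(\int_{B_{2n}(\nu)}|f(u)|^pe^{-\frac{p\alpha}{2}|u|^2}w(u)\,dv(u)\right)^{q/p}.$$
When $p\le q$ I would factor out $\sup_{\nu}\mu(Q_1(\nu))/w(Q_1(\nu))^{q/p}$, which by Lemma \ref{dis1} (with $\gamma=1$, $\eta=q/p$) is comparable to $\sup_z\mu(B_1(z))/w(B_1(z))^{q/p}$, and then use $q/p\ge1$ together with the bounded overlap of the balls $B_{2n}(\nu)$ to sum the remaining terms into $\|f\|^q_{F^p_{\alpha,w}}$. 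When $p>q$ I would instead apply H\"older's inequality with exponents $(p/(p-q),p/q)$, producing the factor $\left(\sum_{\nu}(\mu(Q_1(\nu))/w(Q_1(\nu))^{q/p})^{p/(p-q)}\right)^{(p-q)/p}$; since this sum equals $\sum_{\nu}\mu(Q_1(\nu))^{p/(p-q)}/w(Q_1(\nu))^{q/(p-q)}$, Lemma \ref{dis2} (with $\gamma=p/(p-q)$, $\eta=q/(p-q)$) identifies it with $\int_{\C^n}\mu(B_1(z))^{p/(p-q)}/w(B_1(z))^{q/(p-q)}\,dv(z)$, and a one-cube comparison (integrating $w\,dv$ over $Q_1(\nu)$ yields $w(Q_1(\nu))\asymp w(B_1(z))$) shows this is comparable to $\|\mu(B_1(\cdot))/w(B_1(\cdot))\|^{p/(p-q)}_{L^{p/(p-q)}(wdv)}$, as required.

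For the necessity I would test against the normalized kernels $f_z(u)=e^{\alpha\langle u,z\rangle-\frac{\alpha}{2}|z|^2}/w(B_1(z))^{1/p}$, for which Lemma \ref{test} gives $\|f_z\|_{F^p_{\alpha,w}}\asymp1$ while a direct computation gives $|f_z(u)|^qe^{-\frac{q\alpha}{2}|u|^2}=e^{-\frac{q\alpha}{2}|u-z|^2}/w(B_1(z))^{q/p}$; restricting the integral to $B_1(z)$ then yields $\|f_z\|^q_{L^q_{\alpha}(\mu)}\gtrsim\mu(B_1(z))/w(B_1(z))^{q/p}$, which settles the case $p\le q$. For $p>q$ I would run the Rademacher-function argument exactly as in Theorem \ref{bdd>} and Theorem \ref{T-bdd}(2): form $F_{\tau}=\sum_{\nu}c_{\nu}r_{\nu}(\tau)f_{\nu}$ with $\{c_{\nu}\}\in l^p(\Z^{2n})$, integrate in $\tau$ and invoke Khinchine's inequality to pass to the square function, localize to $Q_1(\nu)$ to extract $\sum_{\nu}|c_{\nu}|^q\mu(Q_1(\nu))/w(Q_1(\nu))^{q/p}$, and finally use $l^{p/q}$-duality over the arbitrary $\{c_{\nu}\}\in l^p(\Z^{2n})$ to deduce finiteness of $\sum_{\nu}(\mu(Q_1(\nu))/w(Q_1(\nu))^{q/p})^{p/(p-q)}$, which by the discretization above is the claimed $L^{p/(p-q)}(wdv)$ condition. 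The main difficulty is bookkeeping rather than conceptual: in the $p>q$ necessity one must ensure that the lower square-function estimate on each cube captures the full mass $\mu(Q_1(\nu))$, which follows from $|f_{\nu}(\xi)|^qe^{-\frac{q\alpha}{2}|\xi|^2}\asymp w(Q_1(\nu))^{-q/p}$ for $\xi\in Q_1(\nu)$ (Lemma \ref{test} and Remark \ref{remark}), and that the passages between ball-averaged and cube-averaged quantities are uniform in $\nu$, which is precisely what Lemmas \ref{dis1}, \ref{dis2} and Remark \ref{remark} guarantee.
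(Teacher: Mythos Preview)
Your proposal is correct. The paper does not actually supply a proof of this theorem; it merely cites \cite{CFP} for the one-variable case and remarks that the several-variable case is similar. Your argument fills this gap using precisely the discretization machinery (Lemmas \ref{pointwise}, \ref{test}, \ref{dis1}, \ref{dis2}, Remark \ref{remark}) and the Rademacher/duality technique already deployed in Theorems \ref{bdd<}, \ref{bdd>} and \ref{T-bdd}, so it is entirely in keeping with the paper's methods and with what \cite{CFP} does in one variable.

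One small point worth making explicit when you write it up: in the $p>q$ sufficiency, the identification of
\[
\sum_{\nu\in\Z^{2n}}\frac{\mu(Q_1(\nu))^{p/(p-q)}}{w(Q_1(\nu))^{q/(p-q)}}
\quad\text{with}\quad
\int_{\C^n}\left(\frac{\mu(B_1(z))}{w(B_1(z))}\right)^{p/(p-q)}w(z)\,dv(z)
\]
is not a direct application of Lemma \ref{dis2} (which is stated for $dv$, not $w\,dv$), but the proof of that lemma goes through verbatim with $w\,dv$ in place of $dv$: on each $Q_1(\nu)$ the quantities $\mu(B_1(z))$ and $w(B_1(z))$ are comparable to their cube versions by Remark \ref{remark}, and integrating $w\,dv$ over $Q_1(\nu)$ produces the extra factor $w(Q_1(\nu))$, so that $w(Q_1(\nu))^{1-p/(p-q)}=w(Q_1(\nu))^{-q/(p-q)}$ matches the left-hand sum. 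Your parenthetical ``one-cube comparison'' captures this, but spelling out the exponent bookkeeping will make the step transparent.
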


By the above theorem, we know that whether $\mu$ is a $q$-Carleson measure for $F^p_{\alpha,w}$ only depends on the ratio $\lambda=q/p$. Hence we say that $\mu$ is a $(w,\lambda)$-Fock--Carleson measure if the embedding $I_d:F^p_{\alpha,w}\to L^q_{\alpha}(\mu)$ is bounded for some $p,q>0$ with $\lambda=q/p$. Using Theorem \ref{CM} and \cite[Theorem 2.4]{Ch24}, and arguing as in the proof of \cite[Theorem 1.1]{ZZT}, we can establish the following characterization for $(w,\lambda)$-Fock--Carleson measures by using products of functions in weighted Fock spaces $F^p_{\alpha,w}$. We leave the details to the interested reader.

\begin{theorem}
Let $w\in A^{\res}_{\infty}$ and $\mu$ be a positive Borel measure on $\C^n$. For any integer $k\geq1$ and $j=1,\dots,k$, let
$$0<p_j,q_j<\infty,\quad \lambda=\sum_{j=1}^{\infty}\frac{q_j}{p_j}.$$
Then $\mu$ is a $(w,\lambda)$-Fock--Carleson measure if and only if for some (or any) $0<\alpha_j<\infty$, $j=1,\dots,k$, there exists $C>0$ such that for any $f_j\in F^{p_j}_{\alpha_j,w}$, $j=1,\dots,k$,
$$\int_{\C^n}\prod_{j=1}^k|f_j(z)|^{q_j}e^{-\frac{q_j\alpha_j}{2}|z|^2}d\mu(z)\leq C\prod_{j=1}^k\|f_j\|^{q_j}_{F^{p_j}_{\alpha_j,w}}.$$
\end{theorem}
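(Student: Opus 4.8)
The plan is to prove both implications by reducing everything to the geometric (discretized) description of Carleson measures furnished by Theorem~\ref{CM}. The decisive point, recorded just after Theorem~\ref{CM}, is that whether $\mu$ is a $q$-Carleson measure for $F^p_{\alpha,w}$ depends only on $\lambda=q/p$, and in particular not on $\alpha$; this is exactly what makes the ``for some (or any) $\alpha_j$'' phrasing legitimate, since once each side of the asserted equivalence is shown to be equivalent to the single $\alpha$-free geometric condition, its validity for one tuple $\{\alpha_j\}$ forces it for every tuple. Thus it suffices to show that the product estimate is equivalent to: $z\mapsto\mu(B_1(z))/w(B_1(z))^{\lambda}$ is bounded on $\C^n$ when $\lambda\geq1$, and $z\mapsto\mu(B_1(z))/w(B_1(z))$ lies in $L^{1/(1-\lambda)}(wdv)$ when $\lambda<1$.

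For the sufficiency (Carleson $\Rightarrow$ product) I would argue by a single application of H\"older's inequality. Put $s_j:=q_j/p_j$, so that $\sum_{j=1}^k s_j=\lambda$, and set $\theta_j:=\lambda/s_j$; then $\theta_j\geq1$ (because $s_j\leq\lambda$) and $\sum_{j=1}^k\theta_j^{-1}=\lambda^{-1}\sum_{j=1}^k s_j=1$. Applying H\"older with the exponents $\theta_j$ to the product $\prod_j\big(|f_j|^{q_j}e^{-\frac{q_j\alpha_j}{2}|z|^2}\big)$ integrated against $d\mu$ gives
\[
\int_{\C^n}\prod_{j=1}^k|f_j(z)|^{q_j}e^{-\frac{q_j\alpha_j}{2}|z|^2}\,d\mu(z)
\leq\prod_{j=1}^k\left(\int_{\C^n}|f_j(z)|^{q_j\theta_j}e^{-\frac{q_j\theta_j\alpha_j}{2}|z|^2}\,d\mu(z)\right)^{1/\theta_j}.
\]
Each inner factor equals $\|f_j\|_{L^{q_j\theta_j}_{\alpha_j}(\mu)}^{q_j}$, and since the ratio $q_j\theta_j/p_j=\lambda$ is precisely the $(w,\lambda)$-Carleson ratio, Theorem~\ref{CM} yields $\|f_j\|_{L^{q_j\theta_j}_{\alpha_j}(\mu)}\lesssim\|f_j\|_{F^{p_j}_{\alpha_j,w}}$; the product estimate follows at once.

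For the necessity (product $\Rightarrow$ Carleson) I would test with explicit functions, splitting on the size of $\lambda$. When $\lambda\geq1$, take all $f_j:=K^{\alpha_j}_z/\|K^{\alpha_j}_z\|_{F^{p_j}_{\alpha_j,w}}$ centred at one common point $z$; Lemma~\ref{test} and Remark~\ref{remark} give $|f_j(u)|^{q_j}e^{-\frac{q_j\alpha_j}{2}|u|^2}\asymp e^{-\frac{q_j\alpha_j}{2}|u-z|^2}/w(B_1(z))^{q_j/p_j}$, so the product is $\asymp w(B_1(z))^{-\lambda}e^{-c|u-z|^2}$ for some $c>0$; integrating over $u\in B_1(z)$ and using $\prod_j\|f_j\|^{q_j}_{F^{p_j}_{\alpha_j,w}}=1$ produces $\mu(B_1(z))/w(B_1(z))^{\lambda}\lesssim C$, the sought bound. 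When $\lambda<1$ the obstacle is genuine cancellation in a \emph{product} of random series, and the device I would use is \emph{independent} Rademacher families $\{r^{(j)}_\nu\}_{\nu}$, one for each $j$. Setting $f^{\tau}_j:=\sum_{\nu}b^{(j)}_\nu r^{(j)}_\nu(\tau)K^{\alpha_j}_\nu/\|K^{\alpha_j}_\nu\|_{F^{p_j}_{\alpha_j,w}}$ with $b^{(j)}\in l^{p_j}$ (so that $\|f^\tau_j\|_{F^{p_j}_{\alpha_j,w}}\lesssim\|b^{(j)}\|_{l^{p_j}}$ for a.e.\ $\tau$ by Lemma~\ref{test} and the atomic decomposition of \cite[Theorem~2.4]{Ch24}), by independence the expectation of the product factors as a product of expectations, whereupon Khinchine's inequality (see \cite[Theorem~2.1]{Ka}, \cite[Appendix~A]{Du}) applies to each factor separately and produces $\prod_j\big(\sum_\nu(b^{(j)}_\nu)^2|K^{\alpha_j}_\nu(z)|^2/\|K^{\alpha_j}_\nu\|^2\big)^{q_j/2}$. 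Retaining only the diagonal term on each $Q_1(\nu)$ and invoking Lemmas~\ref{dis1}--\ref{dis2} then yields, for all nonnegative sequences $b^{(j)}\in l^{p_j}$,
\[
\sum_{\nu\in\Z^{2n}}\frac{\mu(Q_1(\nu))}{w(Q_1(\nu))^{\lambda}}\prod_{j=1}^k\left(b^{(j)}_\nu\right)^{q_j}
\lesssim\prod_{j=1}^k\|b^{(j)}\|^{q_j}_{l^{p_j}}.
\]
Specialising $b^{(j)}_\nu:=e_\nu^{1/p_j}$ collapses this to $\sum_\nu W_\nu e_\nu^{\lambda}\lesssim(\sum_\nu e_\nu)^{\lambda}$ for all $e_\nu\geq0$, where $W_\nu:=\mu(Q_1(\nu))/w(Q_1(\nu))^{\lambda}$; choosing $e_\nu\propto W_\nu^{1/(1-\lambda)}$ (after a finite truncation) then extracts $\sum_\nu W_\nu^{1/(1-\lambda)}<\infty$, which by Lemma~\ref{dis2} is exactly the discretized form of $\mu(B_1(\cdot))/w(B_1(\cdot))\in L^{1/(1-\lambda)}(wdv)$.

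The step I expect to be the main obstacle is precisely this case $\lambda<1$: if one used a single Rademacher sequence shared by all $k$ factors, the integral $\int_0^1\prod_j|\sum_\nu\cdots r_\nu(\tau)|^{q_j}\,d\tau$ would retain uncontrollable correlations and no factorwise Khinchine bound would be available. Using one \emph{independent} Rademacher family per factor is what decouples the product into a product of second moments, and the atomic-decomposition input \cite[Theorem~2.4]{Ch24} (with Lemma~\ref{test}) is what guarantees that the randomized test functions sit in $F^{p_j}_{\alpha_j,w}$ with norms uniformly bounded by $\|b^{(j)}\|_{l^{p_j}}$, so that the product estimate survives integration in $\tau$. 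The remaining ingredients---the diagonal localisation, the discretization Lemmas~\ref{dis1}--\ref{dis2}, and the elementary duality extracting the $l^{1/(1-\lambda)}$ summability---are routine and run parallel to the argument already used for Theorem~\ref{bdd>}.
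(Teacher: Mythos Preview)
Your proposal is correct and aligns with what the paper intends: the paper gives no self-contained proof but simply says to combine Theorem~\ref{CM} with \cite[Theorem~2.4]{Ch24} and argue as in \cite[Theorem~1.1]{ZZT}, and your argument does exactly that---H\"older for the sufficiency, kernel testing for $\lambda\geq1$, and independent Rademacher families plus Khinchine (with the atomic-decomposition bound of Lemma~\ref{test}(2)/\cite[Theorem~2.4]{Ch24}) followed by duality for $\lambda<1$. One small remark: at the very end you invoke Lemma~\ref{dis2} to identify $\sum_\nu W_\nu^{1/(1-\lambda)}<\infty$ with the condition $\mu(B_1(\cdot))/w(B_1(\cdot))\in L^{1/(1-\lambda)}(wdv)$ of Theorem~\ref{CM}(2), but Lemma~\ref{dis2} is stated against $dv$, not $wdv$; the bridge is the harmless observation (from Remark~\ref{remark}) that $\int_{Q_1(\nu)}w\,dv\asymp w(Q_1(\nu))\asymp w(B_1(z))$ for $z\in Q_1(\nu)$, which converts the extra factor $w(z)/w(B_1(z))$ into a bounded cube-by-cube constant.
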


\medskip



\end{document}